\newtheorem{theorem}{Theorem}[section]
\newtheorem{corollary}[theorem]{Corollary}
\newtheorem{example}[theorem]{Example}
\newtheorem{lemma}[theorem]{Lemma}
\newtheorem{proposition}[theorem]{Proposition}
\newtheorem{conjecture}[theorem]{Conjecture}
\newcommand{\vanish}[1]{}
\newcommand{\pair}[2]{\left(#1,#2\right)}
\newcommand{\erf}{\operatorname{erf}}
\newcommand{\wt}{\operatorname{wt}}
\numberwithin{equation}{section}
\begin{document}

\title{A Spectral Approach to Consecutive Pattern-Avoiding Permutations}

\author{Richard Ehrenborg,
        Sergey Kitaev and
        Peter Perry}
\date{}

\maketitle

\begin{abstract}
We consider the problem of enumerating permutations in the symmetric
group on $n$ elements which avoid a given set of consecutive patterns
$S$, and in particular computing asymptotics as $n$ tends to
infinity. We develop a general method which solves this enumeration
problem using the spectral theory of integral operators on
$L^{2}([0,1]^{m})$, where the patterns in $S$ have length $m+1$.
Kre\u{\i}n and Rutman's generalization of the Perron--Frobenius
theory of non-negative matrices plays a central role. Our methods
give detailed asymptotic expansions and allow for explicit
computation of leading terms in many cases.
As a corollary to our results,
we settle a conjecture of Warlimont on asymptotics for the number of
permutations avoiding a consecutive pattern.
\end{abstract}

\section{Introduction}

In this paper, we study integral operators of the form
\begin{equation}
(T f)(x_{1},\ldots,x_{m})
    =
\int_{0}^{1} \chi(t,x_{1},\ldots,x_{m})
    \cdot
f(t,x_{1},\ldots,x_{m-1}) dt
\label{equation_T}
\end{equation}
and their application to enumerating permutations that avoid a consecutive
pattern. Here $\chi$ is a real-valued function on $\left[0,1\right]^{m}$
which takes values in $\left[0,1 \right]$ and is continuous away from a set
of measure zero in $\left[0,1 \right]^{m+1}$. As we will show, operators of
this type arise naturally when counting permutations that avoid a
consecutive pattern of length $m+1$.

To define the enumeration problem, let $\mathfrak{S}_{n}$ denote the
symmetric group on $n$ elements. For $\pi\in\mathfrak{S}_{n}$ we write
$\pi=\left( \pi_{1}\pi_{2}\cdots\pi_{n}\right)$ where the $\pi_{k}$ are the
integers from $1$ to $n$. For $x=(x_{1},\ldots,x_{n})\in\mathbb{R}^{n}$ with
$x_{i}\neq x_{j}$ for all distinct $i$ and $j$, we denote by $\Pi(x)$ the
unique permutation $\pi\in\mathfrak{S}_{n}$ with $\pi_{i}<\pi_{j}$ if and
only if $x_{i}<x_{j}$. A {\em pattern} of length $k$ is an element $\sigma$
of $\mathfrak{S}_{k}$. If $\pi$ is a permutation of length $n\geq k$, $\pi$
avoids $\sigma$ if $\Pi(\pi_{j},\pi_{j+1},\ldots,\pi_{j+k-1})\neq\sigma$ for
all $j$ with $1\leq j\leq n-k+1$. More generally, if
$S\subseteq\mathfrak{S}_{k}$
we say that $\pi$ avoids $S$ if $\pi$ avoids each
$\sigma \in S$. That is, $S$ is the set of forbidden patterns.

For a subset $S$ of $\mathfrak{S}_{m+1}$, denote by $\alpha_{n}(S)$ the number
of permutations $\pi\in\mathfrak{S}_{n}$ that avoid $S$. Observe that for
$n \leq m$ we have $\alpha_{n}(S)=n!$. Our goal is to compute asymptotics of
$\alpha_{n}(S)$ as $n$ tends to infinity.
Throughout the paper we will assume that $m \geq 2$.

For $S\subseteq\mathfrak{S}_{m+1}$ we define a function $\chi$ on
$\left[0,1\right]^{m+1}$ by setting
$\chi(x)=0$ if $x_{i}=x_{j}$ for some
distinct indices $i$ and $j$, and otherwise setting
\begin{equation}
\chi(x)
  =
\left\{ \begin{array}{c l}
           1 & \Pi(x)\notin S, \\
           0 & \Pi(x)\in S.
        \end{array} \right.
\label{equation_chi}
\end{equation}
Finally, let $T$ be the operator of the form~(\ref{equation_T}).
The standard inner product
on $L^{2}\left( \left[0,1\right]^{m}\right)$,
is defined by
$$      \pair{f}{g}
     =
        \int_{[0,1]^{m}} f(x) \cdot \overline{g(x)} dx ,
$$
and hence the $L^{2}$ norm is given
by 
$\| f \| = \sqrt{\pair{f}{f}}$.
Moreover, let $\mathbf{1}$ denote the
constant function $1$ on~$\left[0,1\right]^{m}$.
It is straightforward to prove
(see Proposition~\ref{proposition_key}) that the formula
\begin{equation}
\frac{\alpha_{n}(S)}{n!}=\pair{T^{n-m}(\mathbf{1})}{\mathbf{1}}
\label{equation_key}
\end{equation}
holds for $n \geq m$.
Note that the left-hand side is the probability of selecting
a permutation $\pi\in\mathfrak{S}_{n}$
at random that avoids the set $S$.

The asymptotic behavior of powers of a bounded linear operator is determined
by its spectrum. Recall that if $A$ is a bounded linear operator from a
Hilbert space to itself, the resolvent set of $A$ is the set of all
$z\in\mathbb{C}$
so that $(z I - A)^{-1}$ is also a bounded operator. The complement
in $\mathbb{C}$ of the resolvent set is the spectrum of $A$, denoted
$\sigma(A)$, and the spectral radius of $A$ is given by
$$
r(A)
       =
\sup\left\{ \left| \lambda\right| :\lambda\in\sigma(A)\right\} .
$$
The peripheral spectrum of $A$ is the intersection of $\sigma(A)$ and the
circle of radius $r(A)$ in the complex plane. As we will see, the peripheral
spectrum of the operator $T$ consists at least of a real eigenvalue at
$r(A)$, although this need not be the only eigenvalue in the peripheral
spectrum. Finally,
define the adjoint of an operator $A$ to be
the operator $A^{*}$ that satisfies
$\pair{f}{A^{*}(g)} = \pair{A(f)}{g}$.

Using spectral theory, we obtain:
\begin{theorem}
Let $S$ be a set of forbidden patterns in $\mathfrak{S}_{m+1}$.
Then the nonzero spectrum of the associated operator $T$
consists of discrete eigenvalues of
finite multiplicity which may accumulate only at $0$.
Furthermore, let $r$ be a positive real number 
such that there is no eigenvalue of $T$ with modulus~$r$
and let
$\lambda_{1}, \ldots, \lambda_{k}$
be the eigenvalues of $T$ greater in modulus than $r$.
Assume that $\lambda_{1}, \ldots, \lambda_{k}$
are simple eigenvalues, with associated eigenfunctions $\varphi_{i}$
and that the adjoint operator $T^{*}$ has
eigenfunctions $\psi_{i}$ corresponding the eigenvalues
$\lambda_{i}$.
Then we have the expansion
\begin{equation}
    \alpha_{n}(S)/n!
    =
\pair{T^{n-m}(\mathbf{1})}{\mathbf{1}}
    =
\sum_{i=1}^{k} 
     \frac{\pair{\varphi_{i}}{\mathbf{1}} \cdot
           \pair{\mathbf{1}}{\overline{\psi_{i}}}}
          {\pair{\varphi_{i}}{\overline{\psi_{i}}}} 
     \cdot \lambda_{i}^{n-m}
     +
O(r^{n})   .
\label{equation_expansion_I}
\end{equation}
Moreover, when the operator $T$ has a positive spectral
radius, that is, $r(T) > 0$ then the spectral radius is
an eigenvalue of the operator $T$.
\label{theorem_expansion}
\end{theorem}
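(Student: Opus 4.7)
The core observation is that although $T$ integrates against only one variable, its $m$-th iterate $T^{m}$ is a classical integral operator on $L^{2}([0,1]^{m})$: unfolding~(\ref{equation_T}) recursively exhibits $T^{m}$ as an integral operator whose kernel is a product of $m$ copies of $\chi$. Since this kernel is $\{0,1\}$-valued on a set of finite measure, $T^{m}$ is Hilbert--Schmidt and hence compact. Riesz--Schauder theory then yields that $\sigma(T^{m}) \setminus \{0\}$ consists of isolated eigenvalues of finite algebraic multiplicity accumulating only at $0$. Combining the spectral mapping identity $\sigma(T^{m}) \setminus \{0\} = \{\lambda^{m} : \lambda \in \sigma(T) \setminus \{0\}\}$ with the observation that an accumulation of $\sigma(T)$ at any nonzero $\lambda_{0}$ would force accumulation of $\sigma(T^{m})$ at $\lambda_{0}^{m} \neq 0$, the same structural conclusion carries over to $T$.

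For the asymptotic expansion I would invoke the Riesz functional calculus. Since no eigenvalue of $T$ lies on $|z| = r$, each $\lambda_{i}$ can be enclosed in a small contour $\gamma_{i}$ disjoint from the rest of the spectrum, and one forms the spectral projection $P_{i} = \tfrac{1}{2\pi i} \oint_{\gamma_{i}} (zI - T)^{-1}\, dz$. Simplicity of $\lambda_{i}$ makes $P_{i}$ of rank one. Writing $P_{i} f = c_{i}(f)\, \varphi_{i}$ and using that $\overline{\psi_{i}}$ is the eigenfunction of $T^{*}$ at $\overline{\lambda_{i}}$, so $\pair{T f}{\overline{\psi_{i}}} = \lambda_{i} \pair{f}{\overline{\psi_{i}}}$, together with $P_{i} \varphi_{i} = \varphi_{i}$, pins down the normalization:
\[
P_{i} f = \frac{\pair{f}{\overline{\psi_{i}}}}{\pair{\varphi_{i}}{\overline{\psi_{i}}}}\, \varphi_{i}.
\]
Set $Q = I - \sum_{i} P_{i}$. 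By construction the spectrum of $T$ restricted to the range of $Q$ lies in a disc of some radius strictly less than $r$, so the Gelfand formula gives $\|T^{n} Q\| = O(r^{n})$. The decomposition $T^{n-m} = \sum_{i} \lambda_{i}^{n-m} P_{i} + T^{n-m} Q$, applied to $\mathbf{1}$ and paired with $\mathbf{1}$ via~(\ref{equation_key}), yields the claimed expansion.

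For the Perron-type conclusion I would note that $\chi$ is $\{0,1\}$-valued, so $T$ and hence $T^{m}$ preserves the cone of non-negative functions in $L^{2}([0,1]^{m})$. If $r(T) > 0$, then $r(T^{m}) = r(T)^{m} > 0$, and the Kre\u{\i}n--Rutman theorem applied to the compact positive operator $T^{m}$ produces a non-negative $\varphi \not\equiv 0$ with $T^{m} \varphi = r(T)^{m} \varphi$. The function
\[
\tilde{\varphi} = \sum_{k=0}^{m-1} r(T)^{-k}\, T^{k} \varphi
\]
is non-negative, nonzero (its $k=0$ summand is $\varphi$), and satisfies $T \tilde{\varphi} = r(T) \tilde{\varphi}$ by a direct telescoping. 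Thus $r(T)$ is itself an eigenvalue of $T$.

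The main technical obstacle I expect is the first transition, from compactness of $T^{m}$ to the Riesz-operator structure of $T$ itself. One must check that each nonzero spectral value of $T$ truly is an isolated eigenvalue with finite-dimensional generalized eigenspace -- not merely that its $m$-th power is such for $T^{m}$ -- so that the Riesz projections and their rank-one structure at the $\lambda_{i}$ are available for $T$ directly. The bookkeeping of the $m$-fold branching of generalized eigenspaces under $\lambda \mapsto \lambda^{m}$, and the matching of algebraic multiplicities on the two sides, is where care is most needed.
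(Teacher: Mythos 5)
Your proposal is correct and follows essentially the same route as the paper: compactness of $T^{m}$ via the Hilbert--Schmidt criterion, Riesz--Schauder theory (which the paper imports from Dunford--Schwarz) to get the discrete spectrum and rank-one Riesz projections, contour-integral/Gelfand estimates for the $O(r^{n})$ tail, and Kre\u{\i}n--Rutman for the Perron part. The one small variant is that you apply Kre\u{\i}n--Rutman to the compact operator $T^{m}$ and then lift the eigenfunction to $T$ via the telescoping sum, whereas the paper invokes Kre\u{\i}n--Rutman's Theorem~6.1 for $T$ directly; your version is if anything slightly more careful since it only needs the compact case of that theorem.
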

Observe that when $T$ has spectral radius $0$,
this result is not useful.

In applications the error term in
equation~(\ref{equation_expansion_I})
can be selected to be the modulus of the next eigenvalue,
that is,
\begin{equation}
    \alpha_{n}(S)/n!
    =
\sum_{i=1}^{k} c_{i} \cdot \lambda_{i}^{n}
     +
O(|\lambda_{k+1}|^{n})   ,
\label{equation_expansion_II}
\end{equation}
where the eigenvalues satisfy
$|\lambda_{1}| \geq \cdots \geq |\lambda_{k}| > |\lambda_{k+1}|$
and
$\lambda_{k+1}$ is a simple eigenvalue.
This follows from letting $r$ be smaller than $|\lambda_{k+1}|$
given a few more terms 
in equation~(\ref{equation_expansion_I}) and increasing the error term.

For certain sets of patterns, we can show that there is a unique
largest eigenvalue with respect to modulus and that this eigenvalue is
simple and positive.  Thus we obtain the leading term of the
asymptotics of $\alpha_{n}(S)/n!$.
A sufficient
condition for the peripheral spectrum of $T$ to consist only of the
simple eigenvalue $r(T)$ is as follows. If $(X,\mu)$ is
a measure space and $f\in L^{2}(X,\mu)$ is a real-valued function, we will
say that $f>0$ if $f(x)>0$ for almost every $x\in X$, and $f\geq 0$ if
$f(x)\geq 0$ for almost every $x$. A bounded operator $A$ on $L^{2}(X,\mu)$
is {\em positivity improving} if for any $f\geq 0$ different from $0$ there
is an integer $k$ (possibly depending on $f$) so that $A^{k}f>0$.
Kre\u{\i}n and Rutman~\cite[Theorem~6.3]{Krein_Rutman}
showed that,
for such an operator, 
the spectral radius $r(A)$ is 
a simple eigenvalue $r(A)$ and all other eigenvalues
$\lambda$ are smaller than $r(A)$, that is,
$|\lambda| < r(A)$.
Furthermore, the associated
eigenfunction~$\varphi$ is positive almost everywhere.
Finally, the adjoint operator $A^{*}$ also has a largest real, positive
and simple eigenvalue at $r(A)$ and
an almost everywhere positive eigenfunction $\psi$.
We summarize this discussion in:
\begin{theorem}
If the operator $A$ is positivity improving
then its largest eigenvalue is real, positive and simple.
\label{theorem_positivity_improving}
\end{theorem}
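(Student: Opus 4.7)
The plan is to deduce the statement directly from the Kre\u{\i}n--Rutman theorem~\cite[Theorem~6.3]{Krein_Rutman}, which is stated in the discussion immediately preceding the claim. In effect the theorem serves as a concise summary of that discussion, so the proof consists of verifying the hypotheses of Kre\u{\i}n--Rutman in our setting and translating the conclusions into our notation.

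First I would fix the natural cone $K \subseteq L^{2}(X,\mu)$ of those real-valued functions which are almost everywhere non-negative. This cone is closed, convex, and reproducing, since every real $f \in L^{2}$ decomposes as $f_{+}-f_{-}$ with $f_{\pm}\in K$; it therefore equips $L^{2}$ with the standard partial order. The positivity-improving hypothesis says precisely that for every non-zero $f\in K$ some iterate $A^{k}f$ lies in the interior of $K$ (that is, it is strictly positive almost everywhere), which is the strong positivity hypothesis used by Kre\u{\i}n and Rutman.

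Applying Kre\u{\i}n--Rutman then delivers all of the desired conclusions: the spectral radius $r(A)$ is an eigenvalue of $A$; it is real and strictly positive; it is algebraically simple; every other eigenvalue $\lambda\in\sigma(A)$ satisfies $|\lambda|<r(A)$; and the corresponding eigenfunction is positive almost everywhere. The analogous statement for the adjoint $A^{*}$ follows because the positivity-improving property is inherited by $A^{*}$ under the standard $L^{2}$ pairing, so the same argument applies verbatim to $A^{*}$.

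The point requiring care is that Kre\u{\i}n--Rutman's theorem is usually stated for compact operators on a Banach space with a reproducing cone, and compactness is not listed among the hypotheses here. In the application of interest the operator $T$ of equation~(\ref{equation_T}) is an integral operator with bounded kernel on the finite measure space $[0,1]^{m+1}$, hence Hilbert--Schmidt and in particular compact, so the hypothesis is automatic; for the abstract statement one should either treat compactness as a silent assumption or invoke a variant of Kre\u{\i}n--Rutman applicable to bounded positivity-improving operators whose peripheral spectrum contains an isolated eigenvalue. This is the only subtlety in the proof; everything else is a direct translation of the cited theorem.
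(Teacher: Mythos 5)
Your proposal takes essentially the same route as the paper: Theorem~\ref{theorem_positivity_improving} is presented there as a summary of the paragraph preceding it, which cites Kre\u{\i}n--Rutman~\cite[Theorem~6.3]{Krein_Rutman} applied to the cone of non-negative functions in $L^{2}$, and no separate proof is given. One correction to your compactness aside: the operator $T$ of equation~(\ref{equation_T}) is a \emph{partial} integral operator --- it integrates over only the variable $t$, with $x_{1},\ldots,x_{m-1}$ shared between the kernel and the argument of $f$ --- so $T$ itself is not Hilbert--Schmidt. What the paper shows is that $T^{m}$ is Hilbert--Schmidt (hence compact), and compactness of a power is enough: it forces the nonzero spectrum of $T$ to consist of isolated eigenvalues of finite multiplicity, which is exactly what the Kre\u{\i}n--Rutman argument needs.
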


A sufficient condition for $T$ to be positivity improving can be
formulated in combinatorial terms as follows.
If $S$ is a set of patterns,
let $G_{S}$
be the graph with vertex set $\mathfrak{S}_{m}$
and a directed edge from
$\pi$ to $\sigma$ if there is a permutation
$\tau\in\mathfrak{S}_{m+1}\backslash S$
with $\Pi(\tau_{1},\ldots,\tau_{m})=\pi$ and
$\Pi(\tau_{2},\ldots,\tau_{m+1}) = \sigma$.
The graph $G_{\emptyset}$ is known
as the graph of overlapping permutations.
Recall that a graph $G$ is {\em strongly connected}
if any vertex of $G$ is connected to any
other vertex by a directed path.

\begin{theorem}
Suppose that the graph $G_{S}$ is strongly connected and that the monotone
permutations $12\cdots (m+1)$ and $(m+1) \cdots 21$ do not belong to $S$. Then
the operator $T$ is positivity improving.
\label{theorem_G}
\end{theorem}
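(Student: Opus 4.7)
The plan is to show that for some fixed integer $k$, the kernel of $T^k$ is strictly positive almost everywhere on $[0,1]^m\times[0,1]^m$; this implies the positivity-improving property, since then for any nonnegative $f\not\equiv 0$ we have $(T^k f)(x)=\int K_k(x,z)f(z)\,dz>0$ for almost every $x$.

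First I would iterate the defining formula of $T$ to write, for any $k\geq m$ and with $y=(y_{m+1},\ldots,y_k)$,
\begin{equation*}
(T^k f)(x)=\int_{[0,1]^m}K_k(x,z)\,f(z)\,dz,\qquad K_k(x,z)=\int_{[0,1]^{k-m}}\mathbf{1}\bigl[\Pi(z,y,x)\text{ avoids }S\bigr]\,dy,
\end{equation*}
where $\Pi(z,y,x)\in\mathfrak{S}_{m+k}$ is the permutation pattern of the length-$(m+k)$ concatenation. Outside a measure-zero subset of $[0,1]^m\times[0,1]^m$, the data $\pi=\Pi(z)\in\mathfrak{S}_m$, $\sigma=\Pi(x)\in\mathfrak{S}_m$, and the interleaving $\iota=\Pi(z_1,\ldots,z_m,x_1,\ldots,x_m)\in\mathfrak{S}_{2m}$ are all well-defined. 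Positivity of $K_k(x,z)$ at such a generic point will follow from the existence of a permutation $\rho\in\mathfrak{S}_{m+k}$ avoiding $S$ consecutively whose first $m$ entries have pattern $\pi$, whose last $m$ entries have pattern $\sigma$, and whose restriction to positions $\{1,\ldots,m\}\cup\{k+1,\ldots,k+m\}$ (read in order) has pattern $\iota$. Indeed for such $\rho$ the open cell $\{\Pi=\rho\}\subset[0,1]^{m+k}$ has a nonempty open slice over any $(z,x)$ with matching boundary data, contributing a positive amount to the integral defining $K_k(x,z)$. Since the number of triples $(\pi,\sigma,\iota)$ is finite, taking $k$ to be the maximum length required across these triples gives a uniform choice.

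The main obstacle is constructing such a $\rho$, which is where the hypotheses enter essentially. By strong connectedness of $G_S$ there exist walks in $G_S$ from $\pi$ to the ascending vertex $12\cdots m$ and from $12\cdots m$ to $\sigma$, each of length at most $m!$. Since $12\cdots(m+1)\notin S$, the ascending vertex carries a self-loop via $\tau=12\cdots(m+1)$; symmetrically, since $(m+1)\cdots 21\notin S$, the descending vertex $m\cdots 21$ carries a self-loop and is reachable through $G_S$. I would construct $\rho$ by concatenating a walk $\pi\to 12\cdots m$, a large number $L$ of self-loops at $12\cdots m$ (optionally interleaved with a detour through $m\cdots 21$ using its self-loop), and a walk $12\cdots m\to\sigma$. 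The resulting $\rho$ avoids $S$ consecutively by construction and has the required boundary patterns $\pi,\sigma$. The delicate remaining step is to show that, for $L$ sufficiently large, the absolute ranks of the intermediate entries --- which at each append step are constrained only by the local $(m+1)$-window pattern and not by the global permutation --- can be assigned so that the boundary blocks realize any prescribed interleaving $\iota$: a self-loop at $12\cdots m$ appends an entry with a fresh ``top'' rank, while a self-loop at $m\cdots 21$ appends one with a fresh ``bottom'' rank, and by distributing sufficiently many such extremal insertions one can slide the $z$- and $x$-block ranks past each other into any desired cross-comparison. This final combinatorial flexibility argument is the technical heart of the proof.
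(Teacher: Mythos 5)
Your route is genuinely different from the paper's. The paper deduces Theorem~\ref{theorem_G} by first proving Proposition~\ref{proposition_G_to_H} (strong connectedness of $G_{S}$ plus the two monotone permutations being allowed implies the infinite graph $H_{S}$ is ergodic) and then invoking the $H_{S}$-machinery (Lemma~\ref{lemma_T_graph}, Proposition~\ref{proposition_N}, and Theorems~\ref{theorem_period} and~\ref{theorem_graph_H_ergodic}) to conclude positivity improving. You instead work directly with the iterated kernel $K_{k}$ of $T^{k}$ and try to build, for every admissible boundary datum $(\pi,\sigma,\iota)$, a single consecutive-$S$-avoiding permutation $\rho\in\mathfrak{S}_{m+k}$ whose first and last $m$-blocks realize $(\pi,\sigma)$ with cross-interleaving $\iota$. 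Your reduction is correct: if such a $\rho$ exists then the fiber of the projection over a generic $(z,x)$ is a nonempty open subset of $[0,1]^{k-m}$, so $K_{k}(x,z)>0$, and finiteness of the set of triples gives a uniform~$k$.

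The gap is exactly where you flag it. The statement that, by inserting enough self-loops at $12\cdots m$ (fresh local maxima) and at $m\cdots 21$ (fresh local minima), one can ``slide the $z$- and $x$-block ranks past each other into any desired cross-comparison'' is the entire content of the theorem, and it is not established. Two concrete obstacles: (i) a self-loop at $12\cdots m$ only forces the appended entry above the \emph{preceding $m$ entries}, not above the whole prefix, so ``fresh top rank'' needs justification and the freedom available at each step must be tracked globally; (ii) to realize an arbitrary $\iota$ you must be positioned at the ascending vertex when you want to push ranks up and at the descending vertex when you want to push them down, and you must argue that the $G_{S}$-walk can be routed through both vertices in the required alternation while the intermediate entries stay compatible with the cumulative rank constraints. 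Without this argument the proof does not close.

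The paper sidesteps this combinatorial bookkeeping by never discretizing: it works in $H_{S}$, whose vertices are actual points of $(0,1)^{m}$, so the cross-interleaving between the source and target blocks is carried automatically by their real coordinates. The walks lifted from $G_{S}$ are squeezed through a rescaled detour $z\to\alpha u\to\alpha v\to w$ with $\alpha<\min(z_{m},w_{1})$, which resolves every rank comparison against the two boundary blocks at once, using the two monotone self-loops only to descend into and ascend out of the small interval $(0,\alpha)$. If you want to salvage your kernel-based route, the cleanest fix is to import exactly this scaling device: build $\rho$ not abstractly in $\mathfrak{S}_{m+k}$ but as $\Pi(z,y,x)$ for an explicitly constructed $y$ whose intermediate coordinates follow the lifted-and-rescaled walk; then the interleaving $\iota$ is automatic and the ``combinatorial flexibility argument'' disappears.
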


As a corollary we have the following result,
proving a conjecture of Warlimont~\cite{Warlimont}.
See also Theorem~4.1 in~\cite{Elizalde}.
\begin{corollary}
Let $S$ consist of a single permutation $\sigma$.
Then the asymptotics of
$\alpha_{n}(S)$ 
is given by
$\alpha_{n}(S)/n! = c \cdot \lambda^{n} + O(r^{n})$,
where $c$, $\lambda$ and $r$ are positive constants such that $\lambda > r$.
\label{corollary_single_permutation}
\end{corollary}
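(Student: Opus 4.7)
My plan is to derive the corollary by chaining Theorems~\ref{theorem_G}, \ref{theorem_positivity_improving}, and \ref{theorem_expansion} applied to the singleton set $S = \{\sigma\}$. If I can show that the operator $T$ is positivity improving, then Theorem~\ref{theorem_positivity_improving} yields a simple, real, positive eigenvalue $\lambda = r(T)$ that strictly dominates every other eigenvalue in modulus. Feeding this into the expansion of Theorem~\ref{theorem_expansion} with $k = 1$ and $r$ chosen strictly between $|\lambda_{2}|$ and $\lambda$ gives the asymptotic form $\alpha_{n}(S)/n! = c\lambda^{n} + O(r^{n})$, where
$$
    c
  =
    \frac{\pair{\varphi}{\mathbf{1}} \cdot \pair{\mathbf{1}}{\overline{\psi}}}
         {\pair{\varphi}{\overline{\psi}}} \cdot \lambda^{-m}.
$$
Positivity of $c$ is then immediate from the almost-everywhere positivity of the Kre\u{\i}n--Rutman eigenfunctions $\varphi$ and $\psi$ guaranteed by Theorem~\ref{theorem_positivity_improving}.

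Everything therefore reduces to verifying the two hypotheses of Theorem~\ref{theorem_G}. In the generic case when $\sigma$ is not one of the two monotone permutations, the condition that neither $12\cdots(m+1)$ nor $(m+1)\cdots 21$ lies in $S$ holds trivially, and only the strong connectivity of $G_{\{\sigma\}}$ remains. My approach is to start from the graph of overlapping permutations $G_{\emptyset}$, which is strongly connected with in- and out-degree $m+1$ at every vertex, and to argue that removing the single edge associated with $\sigma$ cannot disconnect it. The cleanest route is to exhibit, for any pair of vertices $\pi,\rho \in \mathfrak{S}_{m}$, at least two edge-disjoint directed paths from $\pi$ to $\rho$ in $G_{\emptyset}$, so that deletion of any single edge preserves strong connectivity. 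This combinatorial verification is the step I expect to be the main obstacle; the existence of multiple paths is intuitively clear from the richness of $G_{\emptyset}$ (since $m\geq 2$), but a careful proof requires constructing explicit alternative detours around the forbidden pattern.

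The case where $\sigma$ is itself a monotone permutation falls outside Theorem~\ref{theorem_G}. Since reversal $\pi_{1}\cdots\pi_{n}\mapsto\pi_{n}\cdots\pi_{1}$ induces a bijection between permutations avoiding $12\cdots(m+1)$ and those avoiding $(m+1)\cdots 21$, only one monotone case need be treated; the required asymptotic form for this case is already established by Theorem~4.1 of~\cite{Elizalde} of Elizalde and Noy, obtained by an independent analytic argument, and citing that result completes the proof.
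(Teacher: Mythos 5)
Your high-level strategy matches the paper's framework (positivity improving $\Rightarrow$ Kre\u{\i}n--Rutman $\Rightarrow$ spectral expansion), and the formula for $c$ and its positivity are handled correctly. However, both legs of your case analysis have genuine gaps.

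For the non-monotone case you propose to show that $G_\emptyset$ remains strongly connected after deleting the single edge labeled $\sigma$, by exhibiting two edge-disjoint directed paths between every ordered pair of vertices. You explicitly leave this as "the main obstacle," and it really is one: high in-/out-degree alone does not guarantee that a strongly connected digraph has no bridge, so the $2$-edge-connectivity of $G_\emptyset$ needs a genuine argument that you have not supplied. The paper circumvents this entirely. Proposition~\ref{proposition_decomposable} constructs, between any two vertices $\pi, \rho \in \mathfrak{S}_m$, an explicit path whose edge-labels are \emph{all decomposable} permutations (by pasting $\pi$ on small values and $\rho$ on large values). Thus if $\sigma$ is indecomposable (and $\sigma \neq (m+1)\cdots 21$), the path avoids $\sigma$ automatically, and Theorem~\ref{theorem_indecomposable_permutations} applies. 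If $\sigma$ is decomposable (and $\sigma \neq 12\cdots(m+1)$), the complemented pattern $(m+2-\sigma(1),\ldots,m+2-\sigma(m+1))$ is indecomposable and not equal to $(m+1)\cdots 21$, so the same theorem applies after this symmetry. This argument is both complete and cleaner than the edge-connectivity route you sketch.

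For the monotone case your plan is to cite Theorem~4.1 of Elizalde. That is not adequate: the paper describes its own Theorem~\ref{theorem_easy} (which only gives $\lim_n (\alpha_n/n!)^{1/n} = r(T)$) as "extending" Elizalde's result, which strongly indicates that Elizalde's theorem is the weaker $n$-th-root statement, not the full expansion $\alpha_n(S)/n! = c\lambda^n + O(r^n)$ with $\lambda > r$ and $c > 0$ that the corollary asserts. The paper instead observes that $12\cdots(m+1)$ (and by symmetry $(m+1)\cdots 21$) corresponds to the descent word $a^m$, so one can invoke descent pattern avoidance (Theorem~\ref{theorem_D_H}): the de Bruijn graph $D_{\{a^m\}}$ is $D_{m-1}$ with the self-loop at $a^{m-1}$ removed, which is still strongly connected and retains the self-loop at $b^{m-1}$, hence has period $1$; therefore $H_S$ is ergodic and Theorem~\ref{theorem_graph_H_ergodic} gives what is needed. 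Your proof would need to replace the Elizalde citation with such an argument to actually cover the monotone case.
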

\begin{proof}
When the permutation $\sigma$
differs from the two monotone permutations
$12\cdots (m+1)$ and $(m+1) \cdots 21$,
Theorem~\ref{theorem_G} applies to the forbidden set $S = \{\sigma\}$.
The two remaining cases are equivalent, and 
can be settled by Theorem~\ref{theorem_graph_H_ergodic}.
\end{proof}

Another application of Theorem~\ref{theorem_G} is the following.
Call a permutation $\pi$ in $\mathfrak{S}_{n}$
{\em decomposable} if there exists an index $i$
such that
$1 \leq i \leq n-1$ and
$\pi(1), \ldots, \pi(i) \leq i$
(equivalently
$i+1 \leq \pi(i+1), \ldots, \pi(n)$).
A permutation that is not decomposable is called {\em indecomposable}.
These permutations are also known under the terms
{\em connected} and {\em irreducible}.
\begin{theorem}
Let $S$ be a subset of $\mathfrak{S}_{m+1}$ such that
each permutation in $S$ is
indecomposable
and the monotone permutation $(m+1) \cdots 2 1$
does not belong to $S$.
Then $\alpha_{n}(S)$ has the asymptotic expression
$\alpha_{n}(S)/n!
  =
      c \cdot \lambda^{n}  + O(r^{n})$,
where $c$, $\lambda$ and $r$
are positive constants such that $\lambda > r$.
\label{theorem_indecomposable_permutations}
\end{theorem}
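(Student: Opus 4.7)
The plan is to reduce to Theorem~\ref{theorem_G}, which requires that neither monotone permutation of $\mathfrak{S}_{m+1}$ lies in $S$ and that the graph $G_{S}$ is strongly connected.

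The first condition is essentially free. The decreasing permutation $(m+1)\cdots 21$ is excluded from $S$ by hypothesis, while the increasing permutation $12\cdots(m+1)$ is decomposable at $i=1$ (its first entry is $1$), so the hypothesis that every element of $S$ is indecomposable forces $12\cdots(m+1)\notin S$ as well.

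The main obstacle is strong connectivity of $G_{S}$. The key observation is that every decomposable permutation of $\mathfrak{S}_{m+1}$ automatically lies outside $S$, so any edge of $G_{S}$ produced by a decomposable $\tau$ is available. Given $\pi,\sigma\in\mathfrak{S}_{m}$, I would exhibit a directed path from $\pi$ to $\sigma$ by writing down a single permutation of length $2m$ whose every length-$(m+1)$ window is decomposable. Concretely, consider the concatenation
\[
\omega \;=\; \bigl(\pi_{1},\ldots,\pi_{m},\,\sigma_{1}+m,\ldots,\sigma_{m}+m\bigr)\in\mathfrak{S}_{2m},
\]
so that positions $1,\ldots,m$ carry the values $\{1,\ldots,m\}$ with pattern $\pi$ and positions $m+1,\ldots,2m$ carry the values $\{m+1,\ldots,2m\}$ with pattern $\sigma$. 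Every window of $m+1$ consecutive entries must straddle the boundary between the two halves, so its values split as ``all small, then all large'' and its induced pattern in $\mathfrak{S}_{m+1}$ is decomposable, hence not in $S$. Reading the patterns of the successive length-$m$ windows of $\omega$ gives a directed path of length $m$ in $G_{S}$ from $\pi$ to $\sigma$, establishing strong connectivity.

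With both hypotheses of Theorem~\ref{theorem_G} met, the operator $T$ is positivity improving, and Theorem~\ref{theorem_positivity_improving} yields a simple real positive eigenvalue $\lambda=r(T)$, strictly larger in modulus than every other eigenvalue, with almost everywhere positive eigenfunctions $\varphi$ and $\psi$ for $T$ and $T^{*}$. Choosing $r$ strictly between $\lambda$ and the modulus of the next-largest eigenvalue (such a gap exists because by Theorem~\ref{theorem_expansion} the nonzero spectrum is discrete and accumulates only at $0$) and applying the $k=1$ case of Theorem~\ref{theorem_expansion} delivers $\alpha_{n}(S)/n!=c\lambda^{n}+O(r^{n})$, with $c$ strictly positive since $\varphi$, $\psi$, and $\mathbf{1}$ are all positive almost everywhere.
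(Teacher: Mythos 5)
Your proposal is correct and follows essentially the same chain of reasoning as the paper: the paper's proof cites Proposition~\ref{proposition_decomposable} (strong connectivity of $G_{S}$ when $S$ consists of indecomposable permutations, proved via exactly your concatenation $\omega$), Proposition~\ref{proposition_G_to_H}, and Theorems~\ref{theorem_graph_H_ergodic} and~\ref{theorem_positivity_improving}, and your invocation of Theorem~\ref{theorem_G} is just the packaged combination of Proposition~\ref{proposition_G_to_H} with Theorem~\ref{theorem_graph_H_ergodic}. You helpfully make explicit two points the paper leaves implicit — that $12\cdots(m+1)$ is automatically decomposable and hence excluded from $S$, and that the final $c\lambda^{n}+O(r^{n})$ form and the positivity of $c$ come from feeding the simple Perron eigenvalue into Theorem~\ref{theorem_expansion}.
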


More generally we can characterize the spectrum of $T$ in terms of
another graph associated to~$S$. To define it, let $\Delta_{\pi}$
for $\pi \in \mathfrak{S}_{m}$
denote the set of points
$x=\left( x_{1},\ldots,x_{m}\right) \in\left(0,1\right)^{m}$
with $x_{i}\neq x_{j}$ for $i\neq j$ and $\Pi(x)=\pi$. The graph $H_{S}$
has vertex set $\cup_{\pi\in\mathfrak{S}_{m}}\Delta_{\pi}$ and directed
edges from $\left( x_{1},\ldots,x_{m}\right)$
to $(x_{2},\ldots,x_{m+1})$
if $x_{1}\neq x_{m+1}$ and $\Pi(x_{1},\ldots,x_{m+1})\notin S$.
We show that if the graph $H_{S}$
is strongly connected
there is an upper bound on the length
of the directed path connecting
any two vertices.
We define
the {\em period} of a strongly connected graph $G$ as follows.
Fix a vertex~$v$ of~$G$ and,
for $k$ a non-negative integer, let $X_{k}$ be the set of all
vertices in $G$ that can be reached from $v$ in exactly $k$ steps.
The set $Q$ of
all $k$ with $v\in X_{k}$ is a semigroup and generates a subgroup
$d\mathbb{Z}$ of $\mathbb{Z}$.
The integer~$d$ is the {\em period} of the
graph $G$. Note that, if $G$ is strongly connected, then $G$ has
period $d$ for some positive integer~$d$.
Finally, a graph is called {\em ergodic}
if it is strongly connected and has period $1$.

\begin{theorem}
Suppose that $S$ is a set of forbidden
patterns and that the graph $H_{S}$ is strongly connected with
period $d$. Then the operator $T$ has positive spectral radius $r(T)$
and $T$ has a simple eigenvalue
$\lambda=r(T)$ with strictly positive eigenfunction. Moreover,
the spectrum of $T$ is invariant under multiplication by
$\exp(2 \cdot \pi \cdot i/d)$.
\label{theorem_period}
\end{theorem}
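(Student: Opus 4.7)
The plan is to reduce the theorem to an application of Kre\u{\i}n and Rutman's Perron--Frobenius theorem (Theorem~\ref{theorem_positivity_improving}) for $T^d$ on suitable invariant subspaces, and then to recover the statement for $T$ itself by a cyclic-sum construction and a unitary conjugation.

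First I would exploit the period-$d$ structure of $H_S$ to decompose the vertex set $\bigcup_{\pi\in\mathfrak{S}_m}\Delta_{\pi}$ as a disjoint union $Y_0\sqcup Y_1\sqcup\cdots\sqcup Y_{d-1}\subseteq[0,1]^m$ of Frobenius cyclic classes, so that every directed edge of $H_S$ runs from some $Y_j$ to $Y_{j+1\bmod d}$. Under the orthogonal decomposition $L^2([0,1]^m)=\bigoplus_{j}L^2(Y_j)$, the definition of $T$ immediately gives $T(L^2(Y_j))\subseteq L^2(Y_{j+1\bmod d})$, and hence $T^d$ preserves each $L^2(Y_j)$. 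On each $Y_j$ the step-$d$ graph (whose edges are length-$d$ paths of $H_S$) is strongly connected, since any two vertices of a common class are joined by paths whose length is a multiple of $d$, and aperiodic, since otherwise $H_S$ itself would have period strictly greater than $d$.

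Next I would check that each restriction $T^d|_{L^2(Y_j)}$ is positivity improving. Combining the upper bound on $H_S$-path lengths stated just before the theorem with strong connectivity and aperiodicity of the step-$d$ graph, one obtains an integer $N$ so that every pair of vertices of $Y_j$ is joined by a directed path of length exactly $Nd$; iterating the integral definition of $T$, this shows $T^{Nd}f>0$ almost everywhere on $Y_j$ whenever $0\ne f\ge 0$ lies in $L^2(Y_j)$. Theorem~\ref{theorem_positivity_improving} then produces a strictly positive simple Perron eigenfunction $\varphi_j\in L^2(Y_j)$ for $T^d$ with eigenvalue $\mu_j=r\bigl(T^d|_{L^2(Y_j)}\bigr)>0$. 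Because $T$ intertwines adjacent blocks and preserves positivity, $T\varphi_j$ is a Perron eigenfunction of $T^d|_{L^2(Y_{j+1})}$, forcing $\mu_j=\mu_{j+1}=:\mu>0$ and $T\varphi_j=c_j\varphi_{j+1}$ for positive scalars $c_j$ with $\prod_{j=0}^{d-1}c_j=\mu$.

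Setting $\lambda:=\mu^{1/d}$, I would build a Perron eigenfunction of $T$ itself as $\varphi=\sum_{j=0}^{d-1}\alpha_j\varphi_j$ with $\alpha_0=1$ and $\alpha_{j+1}=\alpha_j c_j/\lambda$; the identity $\prod c_j=\lambda^d$ makes the recursion consistent around the cycle and yields $T\varphi=\lambda\varphi$ with $\varphi>0$ almost everywhere on $[0,1]^m$. Simplicity follows from decomposing any $\lambda$-eigenfunction according to the $Y_j$ and invoking simplicity of $\mu$ on each block, while $r(T)^d=r(T^d)=\mu=\lambda^d$ forces $\lambda=r(T)$. For the $d$-fold spectral symmetry I would define the unitary $U$ on $L^2([0,1]^m)$ by $U|_{L^2(Y_j)}=e^{2\pi ij/d}\cdot\mathrm{Id}$; the shift $T(L^2(Y_j))\subseteq L^2(Y_{j+1})$ then yields $UTU^{-1}=e^{2\pi i/d}\,T$, and unitary equivalence of $T$ and $e^{2\pi i/d}T$ forces $\sigma(T)=e^{2\pi i/d}\sigma(T)$. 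The main obstacle I anticipate is the positivity-improving step: securing a single $N$ for which $T^{Nd}f$ is almost everywhere positive throughout $Y_j$ requires upgrading the combinatorial diameter bound for $H_S$ into genuine almost-everywhere positivity of iterated integrals of $\chi$, and carefully handling the fact that $\chi$ is only continuous off a set of measure zero so that combinatorial paths translate into ``fat'' sets of positive measure under repeated integration.
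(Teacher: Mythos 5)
Your proof is correct and follows the same overall scaffolding as the paper's: decompose $X$ into Frobenius cyclic classes $Y_0,\dots,Y_{d-1}$ (the paper's Proposition~\ref{proposition_period}), observe that $T$ shifts blocks cyclically, restrict $T^d$ to a block, verify positivity improving there via the uniform path-length bound (Proposition~\ref{proposition_N} and Lemma~\ref{lemma_T_graph}), and apply Kre\u{\i}n--Rutman. The genuinely different ingredient is your treatment of the $\exp(2\pi i/d)$-invariance: the paper constructs, for each $d$th root of unity $\omega$, an explicit eigenfunction $\psi = \sum_{j=0}^{d-1}(\lambda\omega)^{-j}T^j\varphi$ and checks $T\psi = \omega\lambda\psi$, which in particular shows the nonzero spectrum of $T$ equals $\{\omega\lambda:\omega^d=1,\ \lambda^d\in\sigma(T^d)\}$ exactly; you instead define the unitary $U|_{L^2(Y_j)}=e^{2\pi ij/d}\mathrm{Id}$ and observe $UTU^{-1}=e^{2\pi i/d}T$, getting the spectral invariance in one line without touching eigenfunctions. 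Your route is cleaner for the invariance statement per se, but gives slightly less structural information than the paper's. A further small organizational difference: the paper applies Kre\u{\i}n--Rutman only on the $Y_0$ block and then propagates via $T^j\varphi$, whereas you run the argument on every block and check the Perron roots $\mu_j$ agree by pushing $\varphi_j$ forward; both work. One detail worth flagging: your construction $\varphi=\sum_j\alpha_j\varphi_j$ with $\alpha_j>0$ does produce a strictly positive eigenfunction of $T$ on all of $[0,1]^m$, which is actually cleaner than the paper's closing sentence (the paper's block-$Y_0$ eigenfunction $\varphi$ is \emph{not} an eigenfunction of $T$ unless $d=1$; the intended object there is the $\omega=1$ cyclic sum, which matches what you build). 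Your anticipated worry about upgrading combinatorial paths to positive measure is exactly what Lemma~\ref{lemma_T_graph} resolves by the $\delta$-tube argument, so there is no gap.
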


In case when the period is $1$, we have the conclusion:
\begin{theorem}
Suppose that $S$ is a set of forbidden patterns such that
the graph $H_{S}$ is ergodic. Then
the operator $T$ is positivity improving.
That is, the operator has a unique largest eigenvalue
which is simple, real and positive and
the associated eigenfunction is positive.
\label{theorem_graph_H_ergodic}
\end{theorem}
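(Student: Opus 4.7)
I would prove positivity improving directly; the remaining conclusions (simple real positive largest eigenvalue and positive eigenfunction) then follow either from Theorem~\ref{theorem_positivity_improving} or from Theorem~\ref{theorem_period} specialized to period $d=1$. Iterating the definition of $T$, one obtains
\[
(T^k f)(y)
   =
\int_{[0,1]^k} \prod_{j=1}^{k} \chi(x_j, \ldots, x_{j+m}) \cdot f(x_1, \ldots, x_m) \, dx_1 \cdots dx_k,
\]
where $y = (x_{k+1}, \ldots, x_{k+m})$; in particular, the integral kernel $K_k(v_0, v_k)$ of $T^k$ is strictly positive at a pair $(v_0, v_k)$ precisely when there is a directed walk of length $k$ in $H_S$ from $v_0$ to $v_k$. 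The existence of even a single such walk suffices, since each constraint $\chi = 1$ defines an open subset of its $m+1$ arguments; small perturbations of a given walk therefore furnish an open, positive-measure family of admissible intermediate coordinates.

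The plan is to exhibit a single integer $k$ for which $K_k(v_0, v_k) > 0$ for almost every $(v_0, v_k) \in [0,1]^{2m}$. Positivity improving is then immediate: for $f \geq 0$ not identically zero, almost every $y$ gives $(T^k f)(y) = \int K_k(v_0, y) f(v_0) \, dv_0 > 0$, because $K_k(\cdot, y)$ is positive a.e.\ and the support of $f$ has positive measure. To produce walks uniformly, I would first project to the finite graph $G_S$: walks in $H_S$ descend to walks of the same length in $G_S$, so ergodicity of $H_S$ forces ergodicity of $G_S$. Standard finite-graph theory then yields a uniform $k_0$ such that for every $\pi, \sigma \in \mathfrak{S}_m$ and every $k \geq k_0$, a combinatorial walk $\pi = \pi_0, \ldots, \pi_k = \sigma$ exists in $G_S$. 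Such a walk lifts step by step: each new coordinate may be chosen in the open sub-interval of $[0,1]$ determined by the preceding $m$ values together with the next required $(m+1)$-pattern, giving positive-measure freedom at every stage.

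The main obstacle is the boundary matching, namely that the final $m$ coordinates of the lifted walk must equal the prescribed $v_k$ exactly, not merely lie in $\Delta_\sigma$, and this must hold simultaneously for almost every $(v_0, v_k)$. To handle this I would exploit strong connectivity of $H_S$ itself: between every pair of vertices $v_0, v_k$ with distinct coordinates, $H_S$ already supplies an actual walk of some length, and period $1$ together with the finiteness of the decomposition $[0,1]^{2m} = \bigcup_{\pi, \sigma} \Delta_\pi \times \Delta_\sigma$ (up to a null set) can be used to synchronize the lengths of these walks to a common value $k \geq k_0$. Since the set $\{(v_0, v_k) : K_k(v_0, v_k) > 0\}$ is open and meets each piece $\Delta_\pi \times \Delta_\sigma$ in a non-empty subset, a covering argument based on the open-interval freedom described above should promote this pointwise existence to positivity of $K_k$ almost everywhere, completing the proof.
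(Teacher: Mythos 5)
Your overall strategy is the paper's: establish that $T$ is positivity improving and then invoke Kre\u{\i}n--Rutman (i.e.\ Theorem~\ref{theorem_positivity_improving}). The reduction to showing that the kernel of $T^k$ is a.e.\ positive for some \emph{single} $k$ is also essentially what the paper does; this is the content of Lemma~\ref{lemma_T_graph} together with Proposition~\ref{proposition_N}, which produces a uniform length $N$ such that any two vertices of $H_S$ in the same residue class are joined by a directed path of length exactly~$N$.

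The gap is precisely in how you propose to obtain that uniform length. You correctly identify the obstacle (``boundary matching''), but the resolution you offer --- ``a covering argument based on the open-interval freedom described above should promote this pointwise existence to positivity of $K_k$ almost everywhere'' --- does not close. The set $\{(v_0,y): K_k(v_0,y)>0\}$ is indeed open, and for each fixed pair $(v_0,y)\in X\times X$ some walk length $k(v_0,y)$ exists, with all $k'\ge k(v_0,y)$ sufficiently large also admissible by aperiodicity. But $X\times X$ is open, not compact, so the standard finite-subcover reasoning only yields a uniform $k$ on compact exhaustions $X_\delta\times X_\delta$ (vertices bounded away from the diagonal and the boundary), and that bound $k(\delta)$ can a priori blow up as $\delta\to 0$. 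Since positivity improving requires a single power $k$ (depending on $f$ but not on the target point $y$) with $T^k f>0$ almost everywhere, you genuinely need a $k$ that works for almost every $y\in X$, not merely for $y$ in a compact subset. Your detour through $G_S$ does not rescue this: lifting a $G_S$-walk controls the descent pattern of the endpoint but not the endpoint itself, which is exactly the boundary-matching problem you flagged.

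The missing idea is the monotone-rescaling trick of Proposition~\ref{proposition_N}: edges of $H_S$ are preserved by any strictly increasing bijection $\alpha:(0,1)\to(0,1)$ applied coordinatewise, so one can fix finitely many ``reference'' vertices $x_\pi$ with large coordinates and $y_i$ with small coordinates, build reference paths and a return cycle of controlled length among them, and then, given arbitrary $x,z\in X$, choose monotone maps $\alpha,\beta$ and a scaling $\varepsilon$ so that $\alpha(p_1)$, $\varepsilon\cdot p_2$, $\beta(p_3)$ concatenate into a path from $x$ to $z$ of the prescribed length $N$. This is what turns pointwise connectivity into a uniform bound on an infinite graph, and without it (or an equivalent device) your covering argument is incomplete.
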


For certain explicit patterns, we can compute the spectrum and
eigenfunctions of $T$ and obtain sharp asymptotic formulas for
$\alpha_{n}(S)/n!$.

\begin{example}
{\rm
When $S$ is empty, directly we have $\alpha_{n}(S) = n!$
for all $n \geq 0$.
The associated operator only has one non-zero eigenvalue, namely $1$,
with eigenfunction and adjoint eigenfunction $\mathbf{1}$.
This is the only case we know where the number of eigenvalues
is finite.
}
\end{example}

\begin{example}
{\rm
If $S=\left\{ 123\right\}$ we show that the operator $T$ has
a trivial kernel and spectrum given by
$\left\{\lambda _{k}\right\}_{k \in \mathbb{Z}}$ where
$$
\lambda _{k}=\frac{\sqrt{3}}{2 \cdot \pi \cdot \left( k+\frac{1}{3}\right) }.
$$
Furthermore all the eigenvalues are simple.
We also compute the eigenfunctions of $T$
and the adjoint operator $T^{*}$ and
obtain
\begin{equation}
\frac{\alpha_{n}(123)}{n!}
   =
\exp\left( \frac{1}{2 \cdot \lambda_{0}}\right) \cdot \lambda_{0}^{n+1}
   +
O\left( |\lambda_{-1}|^{n}\right)
\label{equation_123}
\end{equation}
where $\lambda_{0}=r(T)$
and $\lambda_{-1}$ is the next largest eigenvalue
in modulus.
For more terms in the asymptotic expansion
see Theorem~\ref{theorem_123}.
}
\end{example}

\begin{example}
{\rm
If $S=\left\{ 213\right\}$, we show that
the nonzero eigenvalues of the operator $T$
are the roots of the equation
$$
\erf\left( \frac{1}{\sqrt{2} \cdot \lambda}\right) =\sqrt{\frac{2}{\pi }}
$$
which has the unique real root
$\lambda_{0}=0.7839769312\ldots$.
Moreover, $\lambda_{0}$ is the largest root in modulus
of the equation. We then have
\begin{equation}
\frac{\alpha_{n}(213)}{n!}
    =
\exp\left( \frac{1}{2 \cdot \lambda_{0}^{2}}\right) \cdot \lambda_{0}^{n+1} 
    +
O\left( |\lambda_{1}|^{n}\right)
\label{equation_213}
\end{equation}
where
$\lambda_{1,2} = 0.2141426360\ldots \pm 0.2085807022\ldots\cdot i$
are the next two largest roots of the eigenvalue equation.
See~Section~\ref{section_213} for the calculations.
}
\end{example}

\begin{example}
{\rm
If $S = \left\{123,321\right\}$, the numbers $\alpha_{n}(S)$ are
given by $\alpha_{n}(S)=2E_{n}$ for $n \geq 2$ where $E_{n}$ is the
$n$th Euler number. In this case, we can use spectral methods to
obtain the
classical convergent expansion
\begin{equation}
\frac{E_{n}}{n!} 
   =
 2\cdot
\sum_{\substack{ j\geq 1 \\ j \text{ \scriptsize odd}}}
\left( -1 \right)^{\frac{j-1}{2}(n+1)}
  \cdot
\left( \frac{\pi \cdot j}{2} \right)^{-n-1}.
\label{equation_Euler}
\end{equation}
This formula was derived by
Ehrenborg, Levin, and Readdy~\cite{Ehrenborg_Levin_Readdy}
(Corollary 4.2) by using Fourier series. In this case the spectrum of the
operator $T$ is real and invariant under the reflection
$\lambda \mapsto -\lambda$; in particular the two
largest eigenvalues are $\pm 2/\pi$.
Moreover, the matrix $U$
introduced in Proposition~\ref{proposition_T2},
is similar to the matrix
$\begin{pmatrix} 0 & 1 \\ 1 & 0 \end{pmatrix}$
for a cyclic permutation of order two.
}
\end{example}

\begin{example}
{\rm
Let $S = \left\{123,231,312\right\}$.
In this case the asymptotic expansion
converges and we can conclude that
the number of such permutations
in ${\mathfrak S}_{n}$ is given by $n \cdot E_{n-1}$,
a result due to Kitaev and Mansour~\cite{Kitaev_Mansour}.
See Section~\ref{section_123_231_312}.
}
\end{example}

It is easy to find examples of patterns $S$ for which $\rho(T)=0$.

\begin{example}
{\rm
Let $S = \left\{132,231\right\}$. An $S$-avoiding
permutation has no peaks (viewed as the graph of a function from
$\left\{1, \ldots, n\right\}$ to itself) and it is easy
to see that $\alpha_{n}(S)=2^{n-1}$.
However, observe that the proportion
$\alpha_{n}(S)/n!$ is subexponential.
It is straightforward to verify
that the operator $T$ has
no non-zero eigenvalues.
Also note that the graph $G_{S}$ is not strongly connected.
}
\label{example_132_231}
\end{example}

\begin{example}
{\rm
Let $S = \left\{123,213,231,321\right\}$. The directed
graph $G_{S}$ is strongly connected but the monotone permutations $123$ and
$321$ are excluded. In this case $\alpha_{n}(S)=2$ for all $n \geq 2$.
}
\label{example_123_213_21_321}
\end{example}

We close our introduction with a brief overview on the subject of
pattern avoidance in permutations (for more details we refer
to~\cite{Bona,kit}).  The ``classical'' definition of a pattern is
slightly different than the one provided above. We say that a
permutation~$\pi$ avoids a pattern $\sigma$ if $\pi$ does not contain
a {\em subsequence} which is order-isomorphic to $\sigma$. The study
of such patterns originated in theoretical computer science by
Knuth~\cite{Knuth}. However, the first systematic study was done
by Simon and Schmidt~\cite{Simion_Schmidt}, who completely classified the
avoidance of patterns of length three. Since then several hundred
papers related to the field have been published.

One of the most important results in the subject is the proof by
Marcus and Tardos~\cite{Marcus_Tardos}
of the so-called Stanley--Wilf conjecture related to
the asymptotic behavior of the number of permutations that avoid a given
pattern. It states that for any permutation $\sigma$ there exists a constant
$c$ (depending on $\sigma$) such that the number of the permutations of
length $n$ that avoid $\sigma$ is less than $c^{n}$.

In this paper we also study asymptotic behavior of permutations
avoiding patterns, but we consider {\em consecutive} patterns,
occurrences of which correspond to (contiguous) factors, rather than
subsequences, anywhere in permutations. Simultaneous avoidance of
consecutive patterns of length 3 is studied in~\cite{Kitaev}
using direct combinatorial arguments.
Other approaches to study consecutive patterns
were introduced recently which include considering increasing binary
trees~\cite{Elizalde_Noy} by Elizalde and Noy,
symmetric functions~\cite{Mendes_Remmel} by Mendes and Remmel,
homological algebra~\cite{Dotsenko_Knoroshkin}
by Dotsenko and Knoroshkin,
and graphs of pattern overlaps~\cite{Avgustinovich_Kitaev}
by Avgustinovich and Kitaev (see Chapter 5 of \cite{kit} entirely dedicated to known methods and results on consecutive patterns).

In~\cite{Elizalde_Noy} asymptotics for
the following consecutive patterns is given: 123, 132, 1342, 1234
and 1243. These results are obtained by representation of
permutations as increasing binary trees, then using symbolic methods
followed by solving certain linear differential equations with
polynomial coefficients to get corresponding exponential
generating functions, and, finally, using the following result
(see~\cite[Theorem IV.7]{Flajolet_Sedgewick} for a discussion).
\begin{theorem}
If $f(z)$ is analytic at $0$ and $R$ is the 
modulus of a singularity nearest to the origin
in the sense that
$$    R = \sup\{r \geq 0 : f \text{ is analytic in } |z|<r \}, $$
then the coefficient $f_{n} = [z^{n}] f(z)$ satisfies
$$ \limsup_{n \rightarrow \infty} |f_{n}|^{1/n} = 1/R . $$
\end{theorem}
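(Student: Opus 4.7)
The plan is to identify the quantity $R$ defined in the statement with the radius of convergence $\rho$ of the Taylor series of $f$ about the origin, and then invoke the Cauchy--Hadamard formula $\rho = 1/\limsup_{n \to \infty} |f_{n}|^{1/n}$, which is a standard consequence of the root test applied to $\sum f_{n} z^{n}$. Since $f$ is analytic at $0$ by hypothesis, the Taylor coefficients $f_{n} = f^{(n)}(0)/n!$ are well defined and the series has positive radius of convergence, so the formula makes sense.

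To show $R = \rho$ I would prove each inequality separately. For $\rho \leq R$, take any $r < R$: by definition of $R$ the function $f$ is analytic on the open disk $|z| < r'$ for some $r < r' < R$, hence by the Cauchy integral formula applied to a circle of radius $r''$ with $r < r'' < r'$ we obtain the standard bound $|f_{n}| \leq M(r'') / (r'')^{n}$ where $M(r'') = \sup_{|z| = r''} |f(z)|$ is finite by continuity on a compact set. This yields $\limsup |f_{n}|^{1/n} \leq 1/r''$, and letting $r'' \to R$ gives $\limsup |f_{n}|^{1/n} \leq 1/R$, i.e.\ $\rho \geq R$. For the reverse inequality $\rho \leq R$, observe that the Taylor series $\sum f_{n} z^{n}$ converges and is analytic on the open disk of radius $\rho$ by Cauchy--Hadamard, and this analytic function agrees with $f$ on a neighborhood of $0$; by the identity principle it provides an analytic extension of $f$ to $|z| < \rho$, so $R \geq \rho$ by the supremum definition of $R$.

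Combining the two inequalities yields $R = \rho$, and the conclusion $\limsup |f_{n}|^{1/n} = 1/R$ follows immediately from Cauchy--Hadamard. The only subtle point, and the most likely obstacle in a fully rigorous write-up, is ensuring that the ``analytic extension'' constructed from the convergent Taylor series on $|z| < \rho$ really coincides with the original germ of $f$ at $0$; this is handled by the identity theorem for analytic functions, which applies because the two functions already agree on any disk around $0$ contained in the original domain of analyticity of $f$. Everything else (Cauchy's estimates, the root test, compactness of circles) is entirely routine.
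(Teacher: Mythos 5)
The paper does not prove this statement; it is Theorem~IV.7 of Flajolet and Sedgewick, cited for background, so there is no in-paper proof to compare against. Your argument is the standard one and is correct: identify $R$ with the radius of convergence $\rho$ of the Maclaurin series via two inequalities — Cauchy's estimates on circles of radius just below $R$ give $\limsup |f_n|^{1/n} \le 1/R$ (hence $\rho \ge R$), and the identity principle shows the sum of the series analytically continues the germ of $f$ to $|z|<\rho$ (hence $R \ge \rho$) — then apply Cauchy--Hadamard. One small slip: you head the first inequality with ``For $\rho \le R$'' while in fact establishing $\rho \ge R$, as your own concluding clause ``i.e.\ $\rho \ge R$'' makes clear; the mathematics is right, only the label is inverted. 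Everything else, including the care you take that ``analytic in $|z|<r$'' is to be read as ``analytically continuable to $|z|<r$'' so that the identity-theorem step applies, is sound.
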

This differs from our method, as we
obtain detailed asymptotic expansions that allows for explicit
computation of leading coefficients in many cases.
As special cases of our results,
we get more detailed asymptotics for some of the results of Elizalde and
Noy~\cite{Elizalde_Noy}. Also note
that the associated generating function is related to our
operator $T$ by the identity
$$
\sum_{n \geq 0} \alpha_{n}(S) \cdot \frac{z^{n}}{n!}
  =
1 + \cdots + z^{m-1}
 + z^{m} \cdot \pair{(I - z T)^{-1}(\mathbf{1})}{\mathbf{1}} .
$$
{}From this identity it is clear that the radius of convergence of the
generating function is determined by the spectrum of $T$.

The underlying principle that makes our method work
is that one can pick a permutation in $\mathfrak{S}_{n}$
at random with uniform distribution, by picking
a point $(x_{1}, \ldots, x_{n})$ in the unit cube $[0,1]^{n}$
and applying the function $\Pi$.
This method has been used in~\cite{Ehrenborg_Levin_Readdy}
to obtain quadratic inequalities
for the descent set statistics
and in~\cite{Ehrenborg_Farjoun} to enumerate
alternating $2$ by $n$ arrays.

\section{The operator $T$}
\label{section_T}

We now begin our study of the operator $T$.

\subsection{Connection with pattern avoidance}
\label{subsection_T_avoid}

Recall that $S$ is a collection of forbidden patterns
of length $m+1$, that is, $S$ is a subset of ${\mathfrak S}_{m+1}$.
The function $\chi$ is defined on the unit cube $[0,1]^{m+1}$ by
$$
\chi(x)
   =
   \left\{ \begin{array}{c l}
             1 & \Pi(x) \notin S, \\
             0 & \Pi(x) \in S,
           \end{array} \right.
$$
and the operator $T$ on $L^{2}([0,1]^{m})$ by
$$
(T f)(x_{1},\ldots,x_{m})
    =
\int_{0}^{1}
   \chi(t,x_{1},\ldots,x_{m})
      \cdot
   f(t,x_{1},\ldots,x_{m-1}) dt .
$$
For $n \geq m$, define $\chi_{n}$ on
the $n$-dimensional cube $[0,1]^{n}$ by
\begin{equation}
\chi_{n}(x_{1},\ldots,x_{n})
   =
\prod_{j=1}^{n-m} \chi(x_{j},\ldots,x_{m+j}) .
\label{equation_chi_n}
\end{equation}
This allows us to express powers of our operator $T$.
If $k<m$ then
\begin{equation}
\left(T^{k}f\right)(x)
      =
\int_{\left[0,1\right]^{k}}
    \chi_{m+k}(t_{1},\ldots,t_{k},x_{1},\ldots,x_{m})
      \cdot
    f(t_{1},\ldots,t_{k},x_{1},\ldots,x_{m-k})
    dt_{1}\cdots dt_{k}
\label{equation_T_k_1}
\end{equation}
while if $k\geq m$,
\begin{equation}
\left(T^{k}f\right)(x)
      =
\int_{\left[0,1\right]^{k}}
    \chi_{m+k}(t_{1},\ldots,t_{k},x_{1},\ldots,x_{m})
      \cdot
    f(t_{1},\ldots,t_{m})
    dt_{1}\cdots dt_{k}
\label{equation_T_k_2}
\end{equation}

\begin{proposition}
The number of $S$-avoiding permutations in ${\mathfrak S}_{n}$
for $n \geq m$ is given by
$\alpha_{n}(S) = n! \cdot \pair{T^{n-m}(\mathbf{1})}{\mathbf{1}}$.
\end{proposition}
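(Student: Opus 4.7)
The plan is to show both sides count the same quantity, namely the normalized integral of $\chi_{n}$ over the unit cube $[0,1]^{n}$. The key observation is that, by the definition of $\chi$ in~(\ref{equation_chi}) and of $\chi_{n}$ in~(\ref{equation_chi_n}), for a point $x\in[0,1]^n$ whose coordinates are pairwise distinct the product $\chi_n(x)$ equals $1$ if and only if every length-$(m+1)$ contiguous window $\Pi(x_j,\ldots,x_{j+m})$ lies outside $S$; that is, iff the permutation $\Pi(x)$ avoids $S$. So $\chi_n$ is, up to a null set, the indicator of the set $\bigcup_{\sigma\text{ avoids }S} A_{\sigma}$, where $A_{\sigma}=\{x\in[0,1]^n:\Pi(x)=\sigma\}$.

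First I would compute $\int_{[0,1]^{n}}\chi_{n}$. Partitioning the cube along the measure-zero hyperplanes $x_i=x_j$ gives $n!$ congruent regions $A_{\sigma}$, one per $\sigma\in\mathfrak{S}_n$, each obtainable from any other by a coordinate permutation (a measure-preserving symmetry of the cube); hence each has Lebesgue measure $1/n!$. By the preceding paragraph $\chi_n$ is constant on each $A_\sigma$, equal to $1$ exactly when $\sigma$ avoids $S$, so
\begin{equation*}
\int_{[0,1]^{n}} \chi_{n}(x) \, dx
    = \sum_{\sigma\text{ avoids }S} \operatorname{vol}(A_{\sigma})
    = \frac{\alpha_{n}(S)}{n!}.
\end{equation*}

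Next I would identify $\pair{T^{n-m}(\mathbf{1})}{\mathbf{1}}$ with the same integral. Setting $k=n-m\geq 0$ and applying $f=\mathbf{1}$ in~(\ref{equation_T_k_1}) (when $n<2m$) or in~(\ref{equation_T_k_2}) (when $n\geq 2m$) gives the same formula in both cases, since the $\mathbf{1}$ factor is identically one:
\begin{equation*}
\bigl(T^{n-m}(\mathbf{1})\bigr)(x_{1},\ldots,x_{m})
  =
\int_{[0,1]^{n-m}}
   \chi_{n}(t_{1},\ldots,t_{n-m},x_{1},\ldots,x_{m})\, dt_{1}\cdots dt_{n-m}.
\end{equation*}
Taking the inner product against $\mathbf{1}$ and applying Fubini's theorem to combine the two integrations yields $\pair{T^{n-m}(\mathbf{1})}{\mathbf{1}}=\int_{[0,1]^{n}}\chi_{n}$, and combining with the previous display finishes the proof.

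The argument is essentially formal, so there is no real obstacle; the only care required is the verification that formulas~(\ref{equation_T_k_1}) and~(\ref{equation_T_k_2}) produce the same expression when $f=\mathbf{1}$, which one can also check directly by a short induction on $k$ using the definition~(\ref{equation_T}) of $T$ and the telescoping identity $\chi_{n}(x)=\chi(x_{1},\ldots,x_{m+1})\cdot\chi_{n-1}(x_{2},\ldots,x_{n})$. The null-set issues (points where coordinates coincide, or where $\chi$ is discontinuous) contribute nothing, since $\chi$ was assumed to take values in $[0,1]$ and to be continuous away from a set of measure zero.
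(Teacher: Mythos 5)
Your proof is correct and follows essentially the same route as the paper's: both sides are identified with $\int_{[0,1]^n}\chi_n$, the left side via the standard triangulation of the cube into $n!$ regions indexed by permutations, and the right side via the iterated formulas for $T^{n-m}$ plus Fubini. You spell out the $n<2m$ versus $n\geq 2m$ case distinction and the null-set bookkeeping more explicitly than the paper does, but these are just elaborations of the same argument.
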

\begin{proof}
Equation~(\ref{equation_chi_n}) allows us to conclude that
$$ \chi_{n}(x)
            =
            \left\{\begin{array}{c l}
                      1 & \text{ if } \Pi(x) \text{ avoids } S, \\
                      0 & \text{ if } \Pi(x) \text{ does not avoid } S.
                   \end{array} \right.
$$
Hence by integrating over the $n$-dimensional cube
$$
\int_{\left[0,1\right]^{n}}\chi_{n}(x) dx = \frac{\alpha_{n}(S)}{n!},
$$
since each simplex in the standard triangulation of $\left[0,1\right]^{n}$
corresponds to a permutation $\pi\in\mathfrak{S}_{n}$ and each such simplex
has volume $1/n!$. Using this observation and the
identities~(\ref{equation_T_k_1}) and~(\ref{equation_T_k_2}), we can rewrite
the integral as $\pair{T^{n-m}(\mathbf{1})}{\mathbf{1}}$.
\end{proof}

\begin{lemma}
The operator $T$ is a bounded operator
with norm at most $1$.
\end{lemma}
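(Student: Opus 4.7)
The plan is to verify the norm bound directly from the integral definition of $T$, using the fact that $\chi$ is bounded by $1$ combined with the Cauchy--Schwarz inequality and Fubini's theorem. The key observation is that in the formula
\[
(Tf)(x_{1},\ldots,x_{m}) = \int_{0}^{1} \chi(t,x_{1},\ldots,x_{m}) \cdot f(t,x_{1},\ldots,x_{m-1}) \, dt,
\]
the function $f$ depends on only $m-1$ of the ``output'' variables (namely $x_{1},\ldots,x_{m-1}$) together with the integration variable $t$, while $(Tf)$ depends on all $m$ output variables $x_{1},\ldots,x_{m}$. The ``extra'' variable $x_{m}$ on the left is what lets us absorb the factor $1$ that comes out of Cauchy--Schwarz.

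Concretely, I would first pointwise estimate, using $0 \leq \chi \leq 1$ and Cauchy--Schwarz on $[0,1]$,
\[
|(Tf)(x_{1},\ldots,x_{m})|^{2}
\leq \left(\int_{0}^{1} |f(t,x_{1},\ldots,x_{m-1})| \, dt\right)^{2}
\leq \int_{0}^{1} |f(t,x_{1},\ldots,x_{m-1})|^{2} \, dt,
\]
using $\int_{0}^{1} 1 \, dt = 1$. Next, I would integrate this pointwise bound over $(x_{1},\ldots,x_{m}) \in [0,1]^{m}$. Since the right-hand side does not depend on $x_{m}$, the integration over $x_{m}$ contributes a factor of $1$, leaving
\[
\|Tf\|^{2}
\leq \int_{[0,1]^{m-1}} \int_{0}^{1} |f(t,x_{1},\ldots,x_{m-1})|^{2} \, dt \, dx_{1} \cdots dx_{m-1}.
\]
By Fubini (or simply relabeling $t$ as the first coordinate), the right-hand side equals $\|f\|^{2}$, which gives $\|T\| \leq 1$.

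There is essentially no main obstacle here; the only point that requires a moment of care is bookkeeping the roles of the variables, specifically noting that $x_{m}$ appears in $(Tf)$ but not in the pointwise bound, which is exactly what makes the inequality tight enough to close. Boundedness of $T$ on $L^{2}([0,1]^{m})$ follows immediately from $\|T\| \leq 1$.
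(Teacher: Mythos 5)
Your proof is correct and follows essentially the same approach as the paper: apply Cauchy--Schwarz in the integration variable $t$ using $0\le\chi\le1$, then integrate over the output variables, noting that the bound is independent of $x_m$ so that integration produces exactly $\|f\|^2$.
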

\begin{proof}
Apply the Cauchy--Schwarz inequality to equation~(\ref{equation_T})
in the variable $t$ to obtain
\begin{eqnarray*}
    & &
\left|
 \int_{0}^{1}
      \chi(t,x_{1},\ldots,x_{m})
    \cdot
      f(t,x_{1},\ldots,x_{m-1}) dt
\right|^{2} \\
    & \leq &
 \int_{0}^{1}
    |\chi(t, x_{1},\ldots,x_{m})|^{2} dt
  \cdot
 \int_{0}^{1}
    |f(t,x_{1},\ldots,x_{m-1})|^{2} dt \\
    & \leq &
 \int_{0}^{1}
    |f(t,x_{1},\ldots,x_{m-1})|^{2} dt .
\end{eqnarray*}
Now integrating over the variables $x_{1}, \ldots, x_{m}$,
we obtain that
$\|T(f)\|^{2} \leq \|f\|^{2}$ proving the bound.
\end{proof}

\begin{lemma}
The adjoint operator $T^{*}$ is given by
$$
\left( T^{*}f\right)(x)
   =
\int_{0}^{1} \chi(x_{1},\ldots,x_{m},u)f(x_{2},\ldots,x_{m},u) du .
$$
\label{lemma_adjoint}
\end{lemma}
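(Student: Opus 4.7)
The plan is to verify the formula by direct computation from the defining property $\pair{T f}{g} = \pair{f}{T^{*} g}$ for arbitrary $f, g \in L^{2}([0,1]^{m})$, and then read off $T^{*}g$ from the resulting expression. The argument is essentially a change of names of variables together with Fubini's theorem, with the only nontrivial observation being that $\chi$ is real-valued, so $\chi = \overline{\chi}$.

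First I would expand $\pair{Tf}{g}$ using the definition of $T$ and the inner product:
$$
\pair{Tf}{g}
  =
\int_{[0,1]^{m}}
 \left(\int_{0}^{1}
   \chi(t,x_{1},\ldots,x_{m})\cdot f(t,x_{1},\ldots,x_{m-1})\,dt
 \right)
 \overline{g(x_{1},\ldots,x_{m})}\,dx_{1}\cdots dx_{m}.
$$
Because $|\chi|\leq 1$ and $f,g\in L^{2}$, the integrand is absolutely integrable on $[0,1]^{m+1}$, so Fubini permits combining this into a single integral over $[0,1]^{m+1}$ and reordering the variables.

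Next I would perform the relabeling $y_{1}=t$, $y_{i}=x_{i-1}$ for $2\leq i\leq m$, and $u=x_{m}$. Under this substitution we have $\chi(t,x_{1},\ldots,x_{m})=\chi(y_{1},\ldots,y_{m},u)$, $f(t,x_{1},\ldots,x_{m-1})=f(y_{1},\ldots,y_{m})$, and $g(x_{1},\ldots,x_{m})=g(y_{2},\ldots,y_{m},u)$. Isolating $f(y_{1},\ldots,y_{m})$, which does not depend on $u$, and using $\chi=\overline{\chi}$, the integral becomes
$$
\pair{Tf}{g}
 =
\int_{[0,1]^{m}} f(y_{1},\ldots,y_{m})
 \overline{\left(\int_{0}^{1}\chi(y_{1},\ldots,y_{m},u)\cdot g(y_{2},\ldots,y_{m},u)\,du\right)}
 dy_{1}\cdots dy_{m}.
$$
By the uniqueness of the adjoint, the inner $u$-integral must equal $(T^{*}g)(y_{1},\ldots,y_{m})$, which is precisely the claimed formula.

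The main (and only) obstacle is purely bookkeeping, namely keeping track of which slot in $\chi$ each variable lands in after the relabeling; nothing deeper is required since boundedness of $T$ makes the appeal to Fubini immediate and the real-valuedness of $\chi$ disposes of the conjugate. One could also invoke density (e.g.\ continuous functions) to be safe, but the calculation above is valid for arbitrary $f,g\in L^{2}([0,1]^{m})$ as written.
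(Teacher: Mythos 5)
Your proof is correct and follows essentially the same route as the paper: expand $\pair{Tf}{g}$, apply Fubini, and regroup so that the adjoint can be read off from uniqueness. The only cosmetic difference is that you make the variable relabeling $(t,x_1,\ldots,x_m)\mapsto(y_1,\ldots,y_m,u)$ explicit, whereas the paper leaves it implicit by simply moving the $dx_m$ integral inside the conjugate; both are the same computation.
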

\begin{proof}
Since $\chi$ is real-valued, we have that
\begin{eqnarray*}
\pair{T(f)}{g}
  & = &
\int_{[0,1]^{m}} 
\int_{0}^{1} \chi(t,x_{1},\ldots,x_{m}) \cdot f(t,x_{1},\ldots,x_{m-1}) dt
\cdot
\overline{g(x_{1},\ldots,x_{m})} dx_{1} \cdots dx_{m} \\
  & = &
\int_{[0,1]^{m}} 
f(t,x_{1},\ldots,x_{m-1}) dt
\cdot
\overline{\int_{0}^{1} \chi(t,x_{1},\ldots,x_{m}) \cdot 
           g(x_{1},\ldots,x_{m}) dx_{m}}
dt dx_{1} \cdots dx_{m-1} ,
\end{eqnarray*}
proving the lemma.
\end{proof}

The spectrum of an adjoint operator $A^{*}$ is
given by conjugate of the spectrum of $A$.
That is, if $\lambda$ is an eigenvalue of $A$,
then $\overline{\lambda}$ is an eigenvalue of $A^{*}$.
However, for our operator $T$, the complex eigenvalues
come in conjugate pairs.
This is proved using that $\chi$ is real-valued.
\begin{lemma}
We have that
$\overline{T(f)} = T(\overline{f})$
and
$\overline{T^{*}(f)} = T^{*}(\overline{f})$.
Hence
if $\lambda$ is an eigenvalue of $T$ with
eigenfunction $\varphi$,
then 
$\overline{\lambda}$ is also eigenvalue of $T$ with
eigenfunction $\overline{\varphi}$.
Similarly,
if $\lambda$ is an eigenvalue of $T^{*}$ with
eigenfunction $\psi$,
then 
$\overline{\lambda}$ is also eigenvalue of $T^{*}$ with
eigenfunction $\overline{\psi}$.
\end{lemma}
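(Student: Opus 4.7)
The plan is to exploit the single ingredient that makes everything work, namely that $\chi$ is real-valued, and then chase conjugates through the defining integrals. Concretely, I would handle the two identities $\overline{T(f)}=T(\overline{f})$ and $\overline{T^{*}(f)}=T^{*}(\overline{f})$ first, and then derive the statement about eigenvalue pairs as a one-line corollary.

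For the first identity, I would start from the defining formula
\[
(Tf)(x_{1},\ldots,x_{m}) = \int_{0}^{1}\chi(t,x_{1},\ldots,x_{m})\cdot f(t,x_{1},\ldots,x_{m-1})\,dt,
\]
take the complex conjugate of both sides, and move the conjugate inside the integral using the fact that conjugation commutes with Lebesgue integration of complex-valued functions. Since $\chi$ is real-valued by construction (formula~(\ref{equation_chi})), we have $\overline{\chi}=\chi$ pointwise, so the integrand becomes $\chi(t,x_{1},\ldots,x_{m})\cdot\overline{f(t,x_{1},\ldots,x_{m-1})}$, which is exactly the integrand of $T(\overline{f})$ evaluated at $x$. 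The same argument, applied to the explicit formula for $T^{*}$ proved in Lemma~\ref{lemma_adjoint} (which is again an integral of $\chi$ times a translate of $f$), gives $\overline{T^{*}(f)}=T^{*}(\overline{f})$.

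For the eigenvalue statement, suppose $T\varphi=\lambda\varphi$. Conjugating both sides and applying the first identity yields
\[
T(\overline{\varphi}) = \overline{T(\varphi)} = \overline{\lambda\,\varphi} = \overline{\lambda}\cdot\overline{\varphi},
\]
so $\overline{\varphi}$ is an eigenfunction of $T$ with eigenvalue $\overline{\lambda}$; note that $\overline{\varphi}\neq 0$ since $\varphi\neq 0$. The argument for $T^{*}$ is identical.

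There is no real obstacle here; the only thing to be careful about is the justification for pulling conjugation inside the integral (trivial from the definition of the Lebesgue integral on complex-valued functions) and the observation that $\chi$ being real-valued is essential — without it, the same manipulation would produce $\overline{\chi}$ rather than $\chi$ in the integrand and the identity would fail. So the proof reduces to a short computational verification plus a one-line deduction.
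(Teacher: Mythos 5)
Your proof is correct and is exactly the argument the paper has in mind: the paper gives no explicit proof of this lemma, only the remark preceding it that ``This is proved using that $\chi$ is real-valued,'' which is precisely the observation you elaborate. Nothing further to add.
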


\subsection{Eigenvalues and asymptotic expansion}

Since the operator $T$ is bounded, we can use
spectral analysis in order
to explore the operator $T$.
We refer the reader to
Dunford and Schwarz~\cite[Chapter~VII]{Dunford_Schwarz}
for a more detailed exposition.

As we defined in the introduction,
the resolvent set $\varsigma(T)$
is the set of complex numbers $z$ such 
the operator $(z I - T)^{-1}$ exists as a bounded operator.
The {\em spectrum} $\sigma(T)$ of $T$ is the complement of the
resolvent set $\varsigma(T)$.

The {\em index} of a complex number $\lambda$
is the smallest non-negative integer $\nu$ such
that the equation $(\lambda I - T)^{\nu+1} f = 0$
implies $(\lambda I - T)^{\nu} f = 0$ for all functions $f$.
Informally speaking, for operators on a finite-dimensional
vector spaces, the index of an eigenvalue is the size
of the largest Jordan block associated with
that eigenvalue.
A point $\lambda$ in the spectrum is called a pole $T$
of {\em order} $\nu$ if the function $R(z;T)$ has a pole
at $\lambda$ of order $\nu$.
Theorem 18 in~\cite[Section~VII.3]{Dunford_Schwarz} states
that the order of a pole $\lambda$ is equal to its index.

Define the operator $E(\lambda)$ by the integral
$$ 
     E(\lambda)
   =
     \frac{1}{2 \pi i}
        \cdot
     \ointctrclockwise_{C}
        \frac{1}{z I - T} dz  ,
$$
where $C$ is a positive oriented closed curve in the complex plane
only containing the eigenvalue $\lambda$ from the spectrum~$\sigma(T)$.
It follows from~\cite[Section~VII.3]{Dunford_Schwarz} that
$E(\lambda)$ is a projection.

\begin{lemma}
The operator $T^{m}$ is compact.
\end{lemma}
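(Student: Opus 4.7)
The plan is to show directly that $T^m$ is a Hilbert--Schmidt integral operator on $L^2([0,1]^m)$, since every Hilbert--Schmidt operator is compact. The obstacle dissolved by the earlier identity~(\ref{equation_T_k_2}): although $T$ itself is not a standard integral operator (the argument of $f$ differs from that of $Tf$ by a shift in the coordinates), once we iterate $T$ at least $m$ times the shift has exhausted all the coordinates of $x$ and the result becomes a genuine kernel operator on $L^2([0,1]^m)$.

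Concretely, first I would specialize equation~(\ref{equation_T_k_2}) to $k=m$ to obtain
$$
(T^m f)(x_1,\ldots,x_m)
   =
\int_{[0,1]^m}
   \chi_{2m}(t_1,\ldots,t_m,x_1,\ldots,x_m)
      \cdot
   f(t_1,\ldots,t_m)\, dt_1 \cdots dt_m ,
$$
which exhibits $T^m$ as an integral operator with kernel $K(x,t) = \chi_{2m}(t,x)$.

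Next I would verify that this kernel lies in $L^2([0,1]^m \times [0,1]^m)$. This is immediate from the definition~(\ref{equation_chi_n}): $\chi_{2m}$ is a product of $m$ factors of $\chi$, each taking values in $\{0,1\}$, so $|K(x,t)| \leq 1$ pointwise and hence $\|K\|_{L^2([0,1]^{2m})} \leq 1 < \infty$. Therefore $T^m$ is a Hilbert--Schmidt operator.

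Finally, I would invoke the classical fact (see e.g.~\cite{Dunford_Schwarz}) that every Hilbert--Schmidt operator on a separable Hilbert space is compact, yielding the claim. There is really no substantial obstacle here: the only conceptual point is recognizing that exactly $m$ iterations are what is needed to convert the shift-structured operator $T$ into a standard integral operator on $L^2([0,1]^m)$, after which boundedness of $\chi$ makes the Hilbert--Schmidt estimate trivial.
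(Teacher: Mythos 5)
Your proof is correct and is essentially the same as the paper's: both specialize equation~(\ref{equation_T_k_2}) to $k=m$ to exhibit $T^m$ as an integral operator with bounded kernel $\chi_{2m}$, conclude it is Hilbert--Schmidt, and hence compact.
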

\begin{proof}
The operator $T^{m}$ has the form
$$
T^{m}(f) 
    =
\int_{\left[0,1\right]^{m}}
     \chi_{2m}(t_{1},\ldots,t_{m},x_{1},\ldots,x_{m})
         \cdot
     f(t_{1},\ldots,t_{m})
     dt_{1}\cdots dt_{m} .
$$
Since $\chi_{2m}$ is a bounded function
we conclude that
$T^{m}$ is a Hilbert--Schmidt operator,
and hence a compact operator.
\end{proof}

Using Theorems~5 and~6 in~\cite[Section~VII.4]{Dunford_Schwarz}
we conclude:
\begin{theorem}
The spectrum of $T$ is at most denumerable
and has no point of accumulation in the complex plane
except possibly $0$. Every non-zero number $\lambda$
in $\sigma(T)$
is a pole of $T$ and has finite positive index.
For such a number $\lambda$
the projection $E(\lambda)$
has a non-zero finite dimensional range
and it is given by
$\{f \in L^{2}([0,1]^{m}) \: : \:
   (\lambda I - T)^{\nu} f = 0\}$,
where $\nu$ is the order of the pole $\lambda$.
\end{theorem}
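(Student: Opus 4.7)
The plan is to invoke the Riesz--Schauder theory of compact operators through the preceding lemma that $T^m$ is compact. Theorems~5 and~6 of \cite[Section~VII.4]{Dunford_Schwarz} apply directly to $T^m$ and yield: $\sigma(T^m)$ is at most denumerable with $0$ as the only possible accumulation point in $\mathbb{C}$, every nonzero $\mu \in \sigma(T^m)$ is a pole of finite positive index, and the associated spectral projection has nonzero finite-dimensional range. The task is therefore to transfer these statements from $T^m$ back to $T$ itself.

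For the structure of $\sigma(T)$, I would apply the polynomial spectral mapping theorem $\sigma(T^m) = \{\lambda^m : \lambda \in \sigma(T)\}$. Since $\lambda \mapsto \lambda^m$ is continuous and finite-to-one away from $0$, the denumerability of $\sigma(T^m)$ and the absence of nonzero accumulation points transfer to $\sigma(T)$: any nonzero accumulation point $\lambda_0$ of $\sigma(T)$ would produce a nonzero accumulation point $\lambda_0^m$ of $\sigma(T^m)$, which is impossible.

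For the pole structure at a fixed nonzero $\lambda \in \sigma(T)$, the key device is the operator factorization
\[ z^m I - T^m = (zI - T) \cdot \bigl(z^{m-1} I + z^{m-2} T + \cdots + T^{m-1}\bigr), \]
which, on a punctured neighborhood of $\lambda$ on which $z^m \notin \sigma(T^m)$, gives the resolvent identity
\[ (zI - T)^{-1} = \bigl(z^{m-1} I + z^{m-2} T + \cdots + T^{m-1}\bigr)\,(z^m I - T^m)^{-1}. \]
The polynomial factor is entire in $z$, while $(z^m I - T^m)^{-1}$ has a pole of finite order at $z = \lambda$ since $\lambda^m \neq 0$ is a pole of $T^m$. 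Therefore the resolvent of $T$ has at worst a pole of finite order at $\lambda$, and this order is strictly positive because $\lambda \in \sigma(T)$. Integrating this resolvent around a small loop about $\lambda$ expresses $E(\lambda)$ as a projection whose range is contained in the range of the corresponding spectral projection of $T^m$ at $\lambda^m$; since the latter is finite-dimensional, so is the former, and it is nonzero precisely because $\lambda$ really is in $\sigma(T)$.

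The final identification of the range of $E(\lambda)$ with $\{f \in L^{2}([0,1]^{m}) : (\lambda I - T)^{\nu} f = 0\}$ is then an immediate appeal to Theorem~18 of \cite[Section~VII.3]{Dunford_Schwarz} already cited in the text, which equates the order of a pole with its index. The only potentially delicate point in this outline is tracking that the pole order stays finite and positive under the resolvent factorization, but both are transparent from the displayed identity above, so no real obstacle arises.
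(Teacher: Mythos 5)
Your proof is correct, but the transfer step is doing work that the cited theorems already do for you. Theorems~5 and~6 of Dunford--Schwartz~VII.4 are stated for a bounded operator $T$ on a complex Banach space such that \emph{some power} $T^n$ is compact; they conclude directly that every nonzero point of $\sigma(T)$ is a pole of $T$ of finite order with finite-dimensional spectral projection, and that $\sigma(T)$ is at most denumerable with $0$ as the only possible accumulation point. So once the preceding lemma gives that $T^m$ is compact, those two theorems apply verbatim to $T$ (with $n=m$), and adjoining Theorem~18 of~VII.3 to identify the range of $E(\lambda)$ with $\ker(\lambda I - T)^\nu$ finishes the argument. That is the entirety of the paper's proof.

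What you did instead was apply the theorems only to the genuinely compact operator $T^m$ and then push the conclusions back to $T$ by hand: the spectral mapping theorem for denumerability and for excluding nonzero accumulation points, the factorization
\[
(zI - T)^{-1} = \bigl(z^{m-1} I + z^{m-2} T + \cdots + T^{m-1}\bigr)(z^m I - T^m)^{-1}
\]
for the pole structure (using that $z \mapsto z^m$ is a local biholomorphism at $\lambda \neq 0$ so pole orders are preserved), and the inclusion $\ker(\lambda I - T)^\nu \subseteq \ker(\lambda^m I - T^m)^\nu$ for finite-dimensionality of the projection. All of these steps are sound; in effect you have rederived the power-compact versions of Theorems~5 and~6 from the compact case. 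That makes your argument more self-contained than the paper's, at the cost of rebuilding standard infrastructure that the reference already supplies.
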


\vanish{
The eigenvalues of the adjoint operator $T^{*}$
are described by
Lemma~7 in~\cite[Section~VII.3]{Dunford_Schwarz}:
\begin{lemma}
The spectrum of the adjoint operator $T^{*}$
is the complex conjugate of the spectrum of the operator $T$.
\end{lemma}
}

Recall that an eigenvalue $\lambda$ is simple
if the range of $E(\lambda)$ 
is one-dimensional.
That is, the eigenvalue equation
$\lambda \varphi = T \varphi$
has a unique solution up to a scalar multiple
and the generalized eigenvalue equation
$\lambda f = T f + \varphi$
has no solution.

\begin{lemma}
Let $\lambda$ be a simple eigenvalue of the operator $T$
with associated eigenfunction $\varphi$.
Let $\psi$ be the eigenfunction of the adjoint operator $T$
with eigenvalue $\lambda$.
Then the projection $E(\lambda)$ is given by
$$   E(\lambda)(f)
   =
     \frac{\pair{f}{\overline{\psi}}}{\pair{\varphi}{\overline{\psi}}}
     \cdot \varphi   .  $$
\end{lemma}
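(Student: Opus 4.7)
The plan is to pin down the spectral projection $E(\lambda)$ by exploiting that its range is one-dimensional. Since $\lambda$ is a simple eigenvalue, $E(\lambda)(L^{2}([0,1]^{m}))$ is spanned by $\varphi$, so we may write $E(\lambda)(f) = \ell(f) \cdot \varphi$ for some bounded linear functional $\ell$, and the task reduces to identifying $\ell$. Because $\varphi$ lies in the range of the projection, $E(\lambda)(\varphi) = \varphi$, which gives the normalization $\ell(\varphi) = 1$.

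The next step is to compute the adjoint of $E(\lambda)$. Taking the adjoint inside the contour-integral definition and changing variables by complex conjugation shows that $E(\lambda)^{*}$ coincides with the spectral projection of $T^{*}$ at $\overline{\lambda}$. By the conjugation lemma in Section~\ref{subsection_T_avoid}, the hypothesis $T^{*} \psi = \lambda \psi$ implies $T^{*}(\overline{\psi}) = \overline{\lambda} \cdot \overline{\psi}$, so $\overline{\psi}$ is an eigenfunction of $T^{*}$ at $\overline{\lambda}$. Again by simplicity, the range of $E(\lambda)^{*}$ is the one-dimensional space spanned by $\overline{\psi}$.

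This allows me to recover $\ell$. For any $f$ in $\ker E(\lambda)$ and any $g \in L^{2}([0,1]^{m})$, the adjoint identity yields $\pair{f}{E(\lambda)^{*}(g)} = \pair{E(\lambda)(f)}{g} = 0$, so $f$ is orthogonal to the range of $E(\lambda)^{*}$, whence $\pair{f}{\overline{\psi}} = 0$. Therefore the functional $f \mapsto \pair{f}{\overline{\psi}} / \pair{\varphi}{\overline{\psi}}$ agrees with $\ell$ both on $\ker E(\lambda)$ (where both vanish) and at $\varphi$ (where both equal $1$). Since $L^{2}([0,1]^{m}) = \operatorname{range}(E(\lambda)) \oplus \ker E(\lambda)$ as a topological direct sum of a projection, the two functionals coincide on all of $L^{2}([0,1]^{m})$, establishing the claimed formula.

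The main obstacle is ensuring that the denominator $\pair{\varphi}{\overline{\psi}}$ is nonzero, without which the formula is vacuous. I would handle this by contradiction: if $\pair{\varphi}{\overline{\psi}} = 0$, then decomposing an arbitrary $f$ as $E(\lambda)(f) + (f - E(\lambda)(f))$, the first summand is a scalar multiple of $\varphi$ and the second lies in $\ker E(\lambda)$, and by the argument above each pairs to zero against $\overline{\psi}$. This would force $\overline{\psi}$ to annihilate all of $L^{2}([0,1]^{m})$, contradicting the fact that $\psi$ is a nonzero eigenfunction.
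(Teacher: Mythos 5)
Your proof is correct and follows essentially the same route as the paper's: both exploit that simplicity of $\lambda$ makes the range of $E(\lambda)$ one-dimensional and spanned by $\varphi$, that $E(\lambda)^{2}=E(\lambda)$ forces the normalization, and that the range of $E(\lambda)^{*}$ is spanned by $\overline{\psi}$. The only genuine addition in your write-up is the final paragraph proving $\pair{\varphi}{\overline{\psi}}\neq 0$: the paper leaves this implicit when it writes $\alpha=\overline{\pair{\varphi}{\overline{\psi}}}^{-1}\cdot\overline{\psi}$, and your contradiction argument (using $L^{2}=\operatorname{range}E(\lambda)\oplus\ker E(\lambda)$ to show that otherwise $\overline{\psi}$ would annihilate everything) is a worthwhile detail to make explicit. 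One small remark: the paper identifies the functional $\ell$ by Riesz representation as $\pair{\cdot}{\alpha}$ and then locates $\alpha$ inside $\operatorname{range}E(\lambda)^{*}$, whereas you compare the two functionals directly on $\operatorname{range}E(\lambda)$ and $\ker E(\lambda)$; these are two phrasings of the same mechanism.
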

\begin{proof}
Since the eigenvalue $\lambda$ is simple,
the range of the projection $E(\lambda)$ 
is one-dimensional and spanned by
the eigenfunction $\varphi$.
Since the projection is continuous
we may assume that it has the form
$E(\lambda)(f) = \pair{f}{\alpha} \cdot \varphi$
for some function $\alpha$.
It is straightforward to observe that since $E(\lambda)$ is a projection,
that is, $E(\lambda)^{2} = E(\lambda)$,
we have that $\pair{\varphi}{\alpha} = 1$.

Now the adjoint operator $E(\lambda)^{*}$ is
given by
$E(\lambda)^{*}(g) = \pair{g}{\varphi} \cdot \alpha$
since
$$\pair{g}{E(\lambda)(f)}
    =
  \pair{g}{\pair{f}{\alpha} \cdot \varphi}
    =
  \pair{\alpha}{f} \cdot \pair{g}{\varphi}
    =
  \pair{\pair{g}{\varphi} \cdot \alpha}{f}  .  $$
Hence $\alpha$ belongs to the range of the
adjoint operator, that is, it is multiple of
the eigenfunction $\overline{\psi}$.
In fact, we observe that
$\alpha$ is given by
$\overline{\pair{\varphi}{\overline{\psi}}}^{-1}
\cdot \overline{\psi}$.
\end{proof}

\begin{proof}[Proof of Theorem~\ref{theorem_expansion}]
By analytic functional calculus
we can evaluate the operator $T^{n-m}$ by
integrating in the complex plane;
see Theorem~6(c) in~\cite[Section~VII.3]{Dunford_Schwarz}.
We have
$$
T^{n-m}
   =
     \frac{1}{2 \pi i}
         \cdot
     \ointctrclockwise_{\left| z\right| = R}
          \frac{z^{n-m}}{z I - T} dz  , 
$$
where $R$ is greater than the spectral radius of $T$
and we orient the circle in positive orientation.

Let $\sigma$ be the set $\{\lambda_{1}, \ldots, \lambda_{k}\}$
and let $E(\sigma)$ denotes the sum of the projections
$E(\lambda_{1}) + \cdots + E(\lambda_{k})$.
By Theorem~22 in~\cite[Section~VII.3]{Dunford_Schwarz} 
and that
the eigenvalues $\lambda_{1}, \ldots, \lambda_{k}$
are simple,
we have that
$$   T^{n-m} \cdot E(\sigma)
   =
     \sum_{i=1}^{k} E(\lambda_{i}) \cdot \lambda_{i}^{n-m} . $$

We can estimate the operator
$T^{n-m} \cdot (I - E(\sigma))$
by shrinking the path of integration to a circle of radius $r$
$$
T^{n-m} \cdot (I - E(\sigma))
   =
\frac{1}{2 \pi i}
         \cdot
     \ointctrclockwise_{\left| z\right| = r}
          \frac{z^{n-m}}{z I - T} dz  . 
$$
We bound this integral by
\begin{eqnarray*}
\left\| T^{n-m} \cdot (I - E(\sigma)) \right\|
  & = &
\left\|
\frac{1}{2 \pi i}
         \cdot
     \ointctrclockwise_{\left| z\right| = r}
          \frac{z^{n-m}}{z I - T} dz  
\right\| \\
  & \leq &
\frac{1}{2 \pi}
         \cdot
     \ointctrclockwise_{\left| z\right| = r}
\left\|
          \frac{1}{z I - T}
\right\| dz
\cdot r^{n-m}  \\
  & \leq &
\sup_{\left| z\right| =r}
    \left\| \left( z I - T \right)^{-1}\right\|
\cdot r^{n-m}  \\
  & = &
O\left(r^{n}\right)  ,
\end{eqnarray*}
where the last equality follows from that the supremum does not
depend on $n$.
Hence the inner product 
$\pair{T^{n-m} \cdot (I-E(\sigma)) \mathbf{1}}{\mathbf{1}}$
is also bounded by $O(r^{n})$.
Thus we conclude that
\begin{eqnarray*}
\pair{T^{n-m}\mathbf{1}}{\mathbf{1}}
  & = &
\pair{T^{n-m} E(\sigma) \mathbf{1}}{\mathbf{1}}
    +
\pair{T^{n-m} \cdot (I - E(\sigma)) \mathbf{1}}{\mathbf{1}} \\
  & = &
\sum_{i=1}^{k} 
                \pair{E(\lambda_{i})\mathbf{1}}{\mathbf{1}}
              \cdot
                \lambda_{i}^{n-m} 
    +
O(r^{n}) \\
  & = &
\sum_{i=1}^{k}
     \frac{\pair{\varphi_{i}}{\mathbf{1}} 
           \cdot \pair{\mathbf{1}}{\overline{\psi_{i}}}}
          {\pair{\varphi_{i}}{\overline{\psi_{i}}}} 
   \cdot
     \lambda_{i}^{n-m}
    +
O(r^{n}) .
\end{eqnarray*}
\end{proof}

If the eigenvalue $\lambda$ is not real
then $\overline{\lambda}$ is also an eigenvalue.
The two terms corresponding to these two eigenvalues
can be combined as follows.
Let $\lambda = r \cdot e^{i \cdot \theta}$
and 
$\pair{\varphi}{\mathbf{1}} 
    \cdot
 \pair{\mathbf{1}}{\overline{\psi}}
    /
 \pair{\varphi}{\overline{\psi}}
=
  s \cdot e^{i \cdot \beta}$.
Then we have 
\begin{eqnarray*}
     \frac{\pair{\varphi}{\mathbf{1}} 
           \cdot \pair{\mathbf{1}}{\overline{\psi}}}
          {\pair{\varphi}{\overline{\psi}}} 
   \cdot
     \lambda^{n-m}
  +
     \frac{\pair{\overline{\varphi}}{\mathbf{1}} 
           \cdot \pair{\mathbf{1}}{\psi}}
         {\pair{\overline{\varphi}}{\psi}} 
   \cdot
     \overline{\lambda}^{n-m}
  & = &
2 \cdot
\Re\left(
      s \cdot e^{i \cdot \beta}
        \cdot
      r^{n-m} \cdot e^{i \cdot (n-m) \cdot \theta}
\right) \\
  & = &
      2 \cdot
      s \cdot r^{n-m} 
        \cdot
      \cos( \beta + (n-m) \cdot \theta ) .
\end{eqnarray*}

\subsection{Bounds on the norm and spectral radius}

\begin{proposition}
\label{proposition_T1}
Let $T$ be an operator of the form~(\ref{equation_T}) and suppose
that $T$ has a positive spectral radius $r(T)$.
Then $T$ has a positive eigenvalue $\lambda=r(T)$ with
non-negative eigenfunction $\varphi$.
Furthermore, the eigenfunction $\varphi$ is almost everywhere positive.
Similarly, the adjoint operator $T^{*}$ also has
$\lambda=r(T)$ as an eigenvalue with
an almost everywhere positive eigenfunction $\psi$.
\end{proposition}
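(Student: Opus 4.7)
The plan is to apply a version of the Kre\u{\i}n--Rutman theorem to $T^{m}$ and then lift the resulting eigenfunction to one of $T$.

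First, observe that $T$ is a positive operator in the sense of preserving the cone
$$K=\{f\in L^{2}([0,1]^{m}):f\ge 0 \text{ almost everywhere}\}.$$
This is immediate because $\chi$ takes values in $[0,1]$, so $f\ge 0$ forces $Tf\ge 0$. Second, we have already shown that $T^{m}$ is Hilbert--Schmidt and hence compact. Since $T^{m}$ is a compact positive operator on the Banach lattice $L^{2}([0,1]^{m})$ and
$$r(T^{m})=r(T)^{m}>0,$$
the Kre\u{\i}n--Rutman theorem produces $\varphi_{0}\in K\setminus\{0\}$ with $T^{m}\varphi_{0}=r(T)^{m}\varphi_{0}$.

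Third, I would lift $\varphi_{0}$ to an eigenfunction of $T$. Setting $r=r(T)$ and
$$\varphi=\sum_{k=0}^{m-1}r^{m-1-k}\,T^{k}\varphi_{0},$$
a direct telescoping computation using $T^{m}\varphi_{0}=r^{m}\varphi_{0}$ shows that $T\varphi=r\varphi$. Each summand lies in $K$ because $T$ preserves $K$, and the $k=0$ summand is $r^{m-1}\varphi_{0}\ne 0$, so $\varphi\in K\setminus\{0\}$. Hence $r(T)$ is an eigenvalue of $T$ with non-negative eigenfunction $\varphi$.

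Fourth, the same argument applies to $T^{*}$: by Lemma \ref{lemma_adjoint} the adjoint is another integral operator with a non-negative bounded kernel, so $(T^{*})^{m}$ is Hilbert--Schmidt and hence compact, and $r(T^{*})=r(T)$ since the spectrum of $T^{*}$ is the complex conjugate of $\sigma(T)$. Kre\u{\i}n--Rutman applied to $(T^{*})^{m}$, followed by the same lifting, produces a non-negative eigenfunction $\psi$ of $T^{*}$ with eigenvalue $r(T)$.

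The main obstacle is the almost-everywhere positivity of $\varphi$ (and likewise of $\psi$); the plain Kre\u{\i}n--Rutman conclusion only yields non-negativity. The plan here is to study the zero set $Z=\{x\in[0,1]^{m}:\varphi(x)=0\}$ by iterating the eigenvalue equation: from $r^{n}\varphi=T^{n}\varphi$, if $x\in Z$ then the non-negative integrand
$$\chi_{m+n}(t_{1},\ldots,t_{n},x_{1},\ldots,x_{m})\cdot\varphi(t_{1},\ldots,t_{m})$$
must vanish for almost every $(t_{1},\ldots,t_{n})$, which forces $Z$ to be invariant under a kind of backward dynamics determined by the support of $\chi$. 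Choosing $n$ large enough and combining this with the positivity of $r$ and the fact that $\chi$ is positive on a set of full measure wherever no pattern in $S$ is realized, one concludes that $Z$ cannot have positive measure, so $\varphi>0$ almost everywhere, and symmetrically $\psi>0$ almost everywhere.
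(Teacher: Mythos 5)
Your existence argument for the nonnegative eigenfunction is correct and in fact more explicit than the paper's: the paper simply cites Theorem~6.1 of Kre\u{\i}n--Rutman together with the self-duality of the cone of nonnegative functions, applied to $T$ itself (and to $T^*$ simultaneously via duality), while you go through $T^{m}$ to exploit the already-established compactness, then lift via the telescoping sum $\varphi = \sum_{k=0}^{m-1} r^{m-1-k} T^{k}\varphi_{0}$. That lift is verified correctly, each term is in the cone because $T$ preserves it, and $\varphi\neq 0$ because the $k=0$ term is nonzero, so the first three paragraphs of your proposal are sound. Your treatment of $T^{*}$ is also fine, though the paper gets it for free from the self-duality of $K$ rather than by a second application of the theorem.

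The genuine gap is in the last paragraph, and you partly flag it yourself. The almost-everywhere positivity of $\varphi$ does not follow from the sketch you give. The statement ``$\chi$ is positive on a set of full measure wherever no pattern in $S$ is realized'' is not usable here: $\chi$ literally vanishes on the union of the open simplices $\Delta_{\tau}$ for $\tau\in S$, a set of positive measure, so the ``backward dynamics'' you invoke has real obstructions. Making the zero-set argument work would require a transitivity or irreducibility hypothesis on the support of $\chi$ --- precisely the kind of strong connectivity / ergodicity of $H_{S}$ that the paper isolates in Theorems~\ref{theorem_period} and~\ref{theorem_graph_H_ergodic}, and which is \emph{not} assumed in Proposition~\ref{proposition_T1}. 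The paper's own proof does not run such a dynamical argument at all; it attributes the positivity conclusion directly to Theorem~6.1 of Kre\u{\i}n--Rutman (as opposed to Theorem~6.3, which is what handles the positivity-improving case later on). So your first three paragraphs reproduce the existence statement by a legitimate alternate route, but the final step is only a heuristic plan, not a proof, and the specific mechanism you propose would not close the gap without an additional hypothesis that the proposition does not carry.
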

\begin{proof}
Let $K$ be the cone of non-negative functions in $L^{2}([0,1]^{m})$.
Since $T$ preserves this cone and
$T$ has nonzero spectral radius, it follows from Theorem~6.1 of
\cite{Krein_Rutman} and the fact that the cone of
positive functions is self-dual that
$T$ and $T^{*}$ both have $\lambda=r(T)$ as an eigenvalue with at least one
strictly positive eigenfunction.
\end{proof}

For an example of an eigenfunction $\varphi$ corresponding to the
largest eigenvalue, taking the value~$0$ on a set of measure~$0$,
see Proposition~\ref{proposition_123_231_312_eigenfunction}
where $123,231,312$-avoiding permutations are discussed.

\begin{lemma}
For $k \geq m$ the norm of the operator $T^{k}$ is bounded above by
$\sqrt{a_{m+k}(S)/(m+k)!}$.
\end{lemma}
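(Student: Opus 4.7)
The plan is to adapt the Cauchy--Schwarz argument used to bound $\|T\|$ by $1$, but applied to the full $k$-fold integral representation of $T^{k}$ from equation~(\ref{equation_T_k_2}), and then to recognize the integral of $\chi_{m+k}$ as $\alpha_{m+k}(S)/(m+k)!$ (which was established in the proof of the proposition in Subsection~\ref{subsection_T_avoid}). Note that $a_{m+k}(S)$ in the statement should be read as $\alpha_{m+k}(S)$.

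First, since $k \geq m$, I would invoke equation~(\ref{equation_T_k_2}) to write
$$
(T^{k} f)(x)
    =
\int_{[0,1]^{k}} \chi_{m+k}(t_{1},\ldots,t_{k},x_{1},\ldots,x_{m}) \cdot f(t_{1},\ldots,t_{m}) \, dt_{1}\cdots dt_{k} .
$$
Then, by the Cauchy--Schwarz inequality applied to the $k$-dimensional integral in the variables $t_{1},\ldots,t_{k}$, I would estimate
$$
|(T^{k} f)(x)|^{2}
   \leq
\int_{[0,1]^{k}} \chi_{m+k}(t,x)^{2} \, dt
   \cdot
\int_{[0,1]^{k}} |f(t_{1},\ldots,t_{m})|^{2} \, dt_{1}\cdots dt_{k} .
$$

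Next, I would use two elementary observations. First, $\chi_{m+k}$ is $\{0,1\}$-valued, so $\chi_{m+k}^{2} = \chi_{m+k}$. Second, the integrand $|f(t_{1},\ldots,t_{m})|^{2}$ depends only on the first $m$ coordinates, so integrating the extra variables $t_{m+1},\ldots,t_{k}$ over $[0,1]^{k-m}$ just contributes a factor of $1$, giving $\int_{[0,1]^{k}} |f(t_{1},\ldots,t_{m})|^{2} \, dt = \|f\|^{2}$. Integrating the inequality above over $x \in [0,1]^{m}$ yields
$$
\|T^{k} f\|^{2}
   \leq
\int_{[0,1]^{m+k}} \chi_{m+k}(t,x) \, dt \, dx
   \cdot
\|f\|^{2}
   =
\frac{\alpha_{m+k}(S)}{(m+k)!} \cdot \|f\|^{2},
$$
where the last equality is exactly the identity established in the proof of the earlier proposition, namely that integrating $\chi_{n}$ over the $n$-cube counts the $S$-avoiding permutations divided by $n!$. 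Taking square roots gives the claimed bound.

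There is no real obstacle here; the proof is a direct generalization of the $\|T\| \leq 1$ lemma. The one subtle point worth isolating is the dimension-counting in Cauchy--Schwarz: performing the inequality against the full $k$-dimensional variable rather than just the $m$ variables on which $f$ depends is what allows the pattern-avoiding count for permutations of length $m+k$ to appear on the right-hand side.
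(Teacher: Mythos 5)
Your proof is correct and follows essentially the same route as the paper: apply Cauchy--Schwarz in the $k$ integration variables to the representation~(\ref{equation_T_k_2}), use that $\chi_{m+k}^{2}=\chi_{m+k}$ and that $|f|^{2}$ depends only on the first $m$ of them, then integrate over $x\in[0,1]^{m}$ and identify $\int_{[0,1]^{m+k}}\chi_{m+k}$ with $\alpha_{m+k}(S)/(m+k)!$. You also correctly note the typo $a_{m+k}(S)$ for $\alpha_{m+k}(S)$.
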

\begin{proof}
Applying the Cauchy--Schwarz inequality to equation~(\ref{equation_T_k_2})
in the variables $t_{1}$ through $t_{k}$,
we obtain
\begin{eqnarray*}
    & &
\left|
 \int_{[0,1]^{k}}
      \chi_{m+k}(t_{1},\ldots,t_{k},x_{1},\ldots,x_{m})
    \cdot
      f(t_{1},\ldots,t_{m}) dt
\right|^{2} \\
    & \leq &
 \int_{[0,1]^{k}}
    |\chi_{m+k}(t_{1},\ldots,t_{k},x_{1},\ldots,x_{m})|^{2} dt
  \cdot
 \int_{[0,1]^{k}}
    |f(t_{1},\ldots,t_{m})|^{2} dt \\
    & = &
 \int_{[0,1]^{k}}
    \chi_{m+k}(t_{1},\ldots,t_{k},x_{1},\ldots,x_{m}) dt
  \cdot
 \int_{[0,1]^{m}}
    |f(t_{1},\ldots,t_{m})|^{2} dt .
\end{eqnarray*}
Now integrating the variables $x_{1}, \ldots, x_{m}$
over $[0,1]^{m}$ we
have $\|T^{k}(f)\|^{2} \leq 
{\alpha_{m+k}(S)}/{(m+k)!} \cdot \|f\|^{2}$,
proving that $\|T^{k}\| \leq \sqrt{\alpha_{m+k}(S)/(m+k)!}$.
\end{proof}

\begin{proposition}
\label{proposition_key}
For a nonempty set of forbidden patterns $S$ we have
that spectral radius of the operator $T$ is less
than $1$, that is, $r(T)<1$.
\end{proposition}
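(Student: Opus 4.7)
The plan is to combine the operator-norm estimate $\|T^{k}\| \leq \sqrt{\alpha_{m+k}(S)/(m+k)!}$ from the previous lemma with Gelfand's formula $r(T) = \lim_{k\to\infty} \|T^{k}\|^{1/k}$. This reduces the problem to showing that $\alpha_{n}(S)/n!$ decays exponentially in $n$ whenever $S \neq \emptyset$, since then
$$ r(T) \leq \limsup_{k \to \infty} \left( \frac{\alpha_{m+k}(S)}{(m+k)!} \right)^{1/(2k)} < 1. $$

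To prove exponential decay I would use the probabilistic interpretation emphasized in the introduction: if $(x_{1},\ldots,x_{n})$ is drawn uniformly from $[0,1]^{n}$, then $\alpha_{n}(S)/n!$ is the probability that $\Pi(x_{1},\ldots,x_{n})$ avoids $S$. Fix any $\sigma \in S$ and partition the indices into $N = \lfloor n/(m+1) \rfloor$ \emph{disjoint} consecutive blocks $B_{j} = \{(j-1)(m+1)+1,\ldots,j(m+1)\}$. The tuples $(x_{i})_{i \in B_{j}}$ are mutually independent, and each induces a uniformly distributed pattern in $\mathfrak{S}_{m+1}$; hence the events ``block $B_{j}$ realizes $\sigma$'' are independent, each with probability $1/(m+1)!$. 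Since avoiding $S$ on the whole permutation is a stronger condition than merely avoiding $\sigma$ on each disjoint block,
$$ \frac{\alpha_{n}(S)}{n!} \;\leq\; \Pr\bigl[\text{no } B_{j} \text{ realizes } \sigma\bigr] \;=\; \left( 1 - \frac{1}{(m+1)!} \right)^{\!N}. $$

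Plugging this back into the norm bound gives $\|T^{k}\| \leq (1-1/(m+1)!)^{\lfloor (m+k)/(m+1) \rfloor / 2}$ for $k \geq m$, and taking the $k$-th root yields
$$ r(T) \;\leq\; \left( 1 - \frac{1}{(m+1)!} \right)^{\!1/(2(m+1))} \;<\; 1, $$
as desired. The only subtle point is to make sure the blocks are genuinely independent in the product-measure sense so that the simple exponential bound applies; this follows because the function $\Pi$ restricted to the coordinates in $B_{j}$ depends only on $(x_{i})_{i \in B_{j}}$, and disjoint coordinate projections of Lebesgue measure on $[0,1]^{n}$ are independent. Everything else is a direct assembly of the lemma and Gelfand's formula.
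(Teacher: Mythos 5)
Your proof is correct and rests on the same two pillars the paper uses: the operator-norm bound $\|T^{k}\| \leq \sqrt{\alpha_{m+k}(S)/(m+k)!}$ from the preceding lemma and the Gelfand/submultiplicativity characterization of the spectral radius. The difference is in the combinatorial input. The paper only needs the single strict inequality $\alpha_{2m}(S) < (2m)!$ — which holds because $S$ is nonempty, so some permutation of length $2m$ contains a pattern from $S$ — to get $\|T^{m}\| < 1$ and hence $r(T) = r(T^{m})^{1/m} \leq \|T^{m}\|^{1/m} < 1$. You instead prove the stronger statement that $\alpha_{n}(S)/n!$ decays exponentially via a disjoint-block probabilistic argument, and then pass to the limit. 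Both are valid; yours is more work than is strictly needed for the stated proposition, but it buys an explicit quantitative bound $r(T) \leq (1 - 1/(m+1)!)^{1/(2(m+1))}$, which the paper's argument does not supply (the paper only shows $r(T) \leq \|T^{m}\|^{1/m}$ for the specific, unestimated quantity $\|T^{m}\|$). The blocking step is sound: the blocks are disjoint coordinate sets, so the block patterns are independent and each realizes $\sigma$ with probability exactly $1/(m+1)!$, and block-avoidance is weaker than full consecutive avoidance, so the inequality goes the right way.
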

\begin{proof}
We have 
$\alpha_{2m}(S) < (2 \cdot m)!$
implying that
$\left\| T^{m} \right\| < 1$.
The inequality
$\left\| T^{m \cdot n}\right\|
  \leq
\left\| T^{m} \right\|^{n}$
implies that
$r(T^{m})
 =\lim_{n\longrightarrow\infty}\left\| T^{m \cdot n}\right\|^{1/n}<1$
strictly.
The result follows by taking the $m$th root.
\end{proof}

\begin{proposition}
\label{proposition_T2}
Let $T$ be an operator of the form~(\ref{equation_T}) and suppose
that $\rho = r(T)>0$.
Then the operator $T$ admits a decomposition of the form
$$   T  =  \rho \cdot U  +  W  $$
where $U W = W U = 0$,
$U$ has finite-dimensional range, $U$ maps the interior of
the cone of positive functions into itself, the operator $W$
has spectral radius less than $\rho$, and the
eigenvalues of $U$ are roots of unity including $1$.
\end{proposition}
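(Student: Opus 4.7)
My plan is to construct $U$ and $W$ from the Riesz spectral projection onto the peripheral spectrum of $T$. First, I would exploit the compactness of $T^{m}$ to show that the peripheral spectrum $\sigma_{p} = \{\lambda \in \sigma(T) : |\lambda| = \rho\}$ is a finite set $\{\lambda_{1}, \ldots, \lambda_{s}\}$: the nonzero spectrum of $T$ can only accumulate at $0$, so only finitely many eigenvalues lie on the circle of radius $\rho$. Let $E = \sum_{i=1}^{s} E(\lambda_{i})$ be the sum of the Riesz projections; by the spectral theorem recalled earlier in this section, each $E(\lambda_{i})$ has finite-dimensional range, so $E$ does too.

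Next, I would set $U = \rho^{-1} T E$ and $W = T(I-E)$. The decomposition $T = \rho U + W$ is then immediate, and the relations $UW = WU = 0$ follow from $E(I-E) = (I-E)E = 0$ together with the commutativity of $E$ with $T$. Finite-dimensionality of the range of $U$ is inherited from $E$. To show $r(W) < \rho$, I would argue that on $\operatorname{Range}(I-E)$ the operator $T$ acts with spectrum $\sigma(T) \setminus \sigma_{p}$, which lies inside the open disk of radius $\rho$ and accumulates only at $0$, so the supremum of its moduli is strictly less than $\rho$.

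For the positivity statement, I would invoke Proposition~\ref{proposition_T1} to produce an a.e.-positive eigenfunction $\varphi$ of $T$ at $\rho$ and an a.e.-positive eigenfunction $\psi$ of $T^{*}$ at $\rho$. For a non-negative, nonzero $f$, the component of $Uf$ along $\varphi$ is a positive multiple of $\varphi$ because $\pair{f}{\overline{\psi}} > 0$. Combined with the fact, which is part of the Krein--Rutman-type structure theorem and should be proved along the way, that the remaining peripheral eigenfunctions are dominated pointwise in modulus by $\varphi$, this $\varphi$-component forces $Uf$ to be positive almost everywhere.

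The main obstacle is verifying that the eigenvalues of $U$, namely the ratios $\lambda_{i}/\rho$, are roots of unity. This requires the Perron--Schaefer / Krein--Rutman structure theorem for the peripheral spectrum of a positive operator on a Banach lattice: under our hypotheses, that $\rho$ is a pole of the resolvent (by compactness of $T^{m}$) with an almost everywhere positive eigenfunction, the set $\sigma_{p}/\rho$ is a finite multiplicative subgroup of the unit circle. The classical argument shows that if $\lambda = \rho \omega$ has eigenfunction $g$, then $|g|$ is itself a positive eigenfunction of $T$ at $\rho$, and the phase function $g/|g|$ satisfies a functional relation that forces closure under multiplication of the set of admissible $\omega$. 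Being finite, this subgroup is cyclic of some order $d$, so all its elements are $d$-th roots of unity, with $1$ present because $\rho$ itself lies in $\sigma_{p}$.
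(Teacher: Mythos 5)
Your construction of $U$ and $W$ from the Riesz projections onto the peripheral spectrum, and your argument that $r(W) < \rho$, are correct and do reconstruct the underlying mechanism. The paper, however, dispatches the entire proposition with a single citation: it applies Theorem~8.1 of Kre\u{\i}n and Rutman to $A = \rho^{-1} T$, and that theorem is precisely the structure result you are trying to re-derive. So while your route is more explicit about the Riesz-projection bookkeeping, both ultimately rest on the Kre\u{\i}n--Rutman peripheral-spectrum theorem (equivalently, the Perron--Schaefer circularity theorem in the Banach-lattice setting), which you also end up invoking for the roots-of-unity claim.

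There are two places where your sketch has genuine gaps that the citation would otherwise absorb. First, you write $E(\rho)f = \frac{\pair{f}{\overline{\psi}}}{\pair{\varphi}{\overline{\psi}}}\varphi$, but this formula requires $\rho$ to be a \emph{simple} eigenvalue, which is not given by Proposition~\ref{proposition_T1}; simplicity is itself part of the structure theorem you are trying to prove. Second, and more seriously, your positivity argument aims to show that $Uf > 0$ a.e.\ for \emph{every} non-negative nonzero $f$, i.e.\ that $U$ is positivity improving. That conclusion is strictly stronger than what the proposition asserts ($U$ maps the interior of the cone into itself) and is in fact false when the period $d$ exceeds $1$: in the example $S = \{123,321\}$, $U$ is rank two with eigenvalues $\pm 1$, and $U$ applied to a non-negative function concentrated near a single peripheral mode of the wrong sign need not be non-negative. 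The correct statement is that quasi-interior points (a.e.\ strictly positive $f$) are sent to quasi-interior points, and the proof of \emph{that} uses the cyclic structure on $\operatorname{Range}(E)$, not just the positivity of the $\varphi$-component and a pointwise domination bound; as you note, domination by $\varphi$ alone does not prevent cancellation. In short, your outline is a reasonable unpacking of what Kre\u{\i}n--Rutman~8.1 says, but if you intend it as a proof rather than a gloss, both the simplicity of $\rho$ and the interior-preserving (not positivity-improving) property of $U$ need the full force of that theorem, which you should simply cite as the paper does.
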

\begin{proof}
The orthogonal decomposition follows from Theorem~8.1 of \cite{Krein_Rutman}
applied to the operator $A=\rho^{-1} \cdot T$.
\end{proof}

In particular, the leading behavior of powers $T^{n}$
is determined by the spectral radius $r(T)$
and the finite-rank operator~$U$.

\begin{theorem}
Let $S$ be a set of forbidden patterns.
Then spectral radius of the operator $T$ is given by
$$
r(T)
  =
\lim_{n \longrightarrow \infty}
         \left( \frac{\alpha_{n}(S)}{n!} \right)^{1/n}  .
$$
\label{theorem_easy}
\end{theorem}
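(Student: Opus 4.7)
The plan is to sandwich $(\alpha_{n}(S)/n!)^{1/n}$ between quantities that both tend to $r(T)$, invoking Gelfand's formula $r(T)=\lim_{k}\|T^{k}\|^{1/k}$. For the upper bound I would use the key identity $\alpha_{n}(S)/n!=\pair{T^{n-m}(\mathbf{1})}{\mathbf{1}}\leq\|T^{n-m}\|$ (since $\|\mathbf{1}\|=1$) and take $n$th roots, writing $\|T^{n-m}\|^{1/n}=(\|T^{n-m}\|^{1/(n-m)})^{(n-m)/n}$ to conclude that $\limsup_{n}(\alpha_{n}(S)/n!)^{1/n}\leq r(T)$. When $r(T)=0$ the theorem follows at once, since the sequence is nonnegative.

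For the lower bound in the case $\rho:=r(T)>0$, I would invoke Proposition~\ref{proposition_T1} to obtain positive eigenfunctions $\varphi>0$ of $T$ and $\psi>0$ of $T^{*}$ with $T\varphi=\rho\varphi$ and $T^{*}\psi=\rho\psi$. The crucial auxiliary step is to establish that $\varphi$ and $\psi$ are pointwise bounded: using formula~(\ref{equation_T_k_2}) at $k=m$ one sees that $T^{m}$ is an integral operator with kernel $\chi_{2m}\in[0,1]$, so $\|T^{m}f\|_{\infty}\leq\|f\|_{1}\leq\|f\|_{2}$ for any $f\in L^{2}([0,1]^{m})$. Applying this to $\varphi=\rho^{-m}T^{m}\varphi$ gives $\|\varphi\|_{\infty}<\infty$, and an analogous bound applies to $\psi$ via $(T^{*})^{m}$.

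Boundedness then permits the pointwise comparisons $\varphi\leq\|\varphi\|_{\infty}\cdot\mathbf{1}$ and $\psi\leq\|\psi\|_{\infty}\cdot\mathbf{1}$, which propagate through powers of $T$ because $T$ has nonnegative kernel: $T^{n}\varphi\leq\|\varphi\|_{\infty}\cdot T^{n}\mathbf{1}$. Pairing against $\psi\geq 0$ and then bounding $\psi$ by $\|\psi\|_{\infty}\cdot\mathbf{1}$ produces
\[
\rho^{n}\pair{\varphi}{\psi} = \pair{T^{n}\varphi}{\psi} \leq \|\varphi\|_{\infty}\|\psi\|_{\infty}\cdot\pair{T^{n}\mathbf{1}}{\mathbf{1}},
\]
and since $\pair{\varphi}{\psi}>0$ this gives $\alpha_{n+m}(S)/(n+m)!\geq C\rho^{n}$ with $C>0$ independent of $n$. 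Taking $(n+m)$th roots yields $\liminf_{n}(\alpha_{n}(S)/n!)^{1/n}\geq\rho$, completing the sandwich. The principal technical obstacle is precisely the boundedness of the eigenfunctions, which a priori live only in $L^{2}$; the smoothing effect of $T^{m}$ (a Hilbert--Schmidt operator with kernel bounded by $1$) is what resolves it and makes the comparison with $\mathbf{1}$ legitimate.
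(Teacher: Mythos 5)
Your argument is correct, and it takes a genuinely different route from the paper's. The paper derives the lower bound from the Kre\u{\i}n--Rutman decomposition $T = \rho U + W$ of Proposition~\ref{proposition_T2}: it observes that $\pair{U^n\mathbf{1}}{\mathbf{1}}$ is periodic in $n$ and bounded above and below by positive constants (because $U$ has finite rank and maps the interior of the cone into itself), while $\pair{W^n\mathbf{1}}{\mathbf{1}}$ decays strictly faster than $\rho^n$. You instead invoke only the weaker Proposition~\ref{proposition_T1} --- existence of a.e.\ positive eigenfunctions $\varphi$ of $T$ and $\psi$ of $T^*$ at the eigenvalue $\rho$ --- and then bootstrap to $L^\infty$ via the smoothing identity $\varphi = \rho^{-m} T^m\varphi$ together with the kernel bound $|\chi_{2m}|\leq 1$, so that $\|T^m\colon L^2\to L^\infty\|\leq 1$. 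The resulting pointwise comparisons $\varphi\leq\|\varphi\|_\infty\mathbf{1}$ and $\psi\leq\|\psi\|_\infty\mathbf{1}$, combined with positivity-preservation of $T^n$ and $\pair{\varphi}{\psi}>0$, yield the two-sided estimate directly. This is a cleaner and more self-contained argument: it avoids appealing to the finite-rank, period-of-roots-of-unity structure of $U$ and replaces it with a standard Perron-type sandwich against the principal eigenfunctions. One small remark worth recording explicitly in a write-up: the smoothing step must be applied to the adjoint as well, which works because $(T^*)^m=(T^m)^*$ is again an integral operator whose kernel is $\chi_{2m}$ (up to the exchange of the two blocks of variables) and hence bounded by $1$, so $\psi=\rho^{-m}(T^*)^m\psi$ is likewise in $L^\infty$. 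Both approaches give the theorem; yours has the merit of using less machinery, while the paper's decomposition $T=\rho U + W$ sets up infrastructure it also exploits later.
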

\begin{proof}
Suppose first that $r(T)=0$. From the inequality
$\left| \pair{T^{n}\mathbf{1}}{\mathbf{1}} \right|
\leq
\left\| T^{n}\right\|$ we immediately conclude that
$\left(\alpha_{n}(S)/n!\right)^{1/n}$
tends to $0$ as $n$ goes to infinity.
If $r(T)>0$, then by Proposition~\ref{proposition_T2},
we have
$$
\frac{\alpha_{n}(S)}{n!}
  =
  r(T)^{n} \pair{U^{n}\mathbf{1}}{\mathbf{1}}
  +
  \pair{W^{n}\mathbf{1}}{\mathbf{1}}
$$
where the second term obeys the estimate
$$
\left| \pair{W^{n}\mathbf{1}}{\mathbf{1}} \right|
   \leq
\left(r(T)-\varepsilon\right)^{n}
$$
for some $\varepsilon>0$ and all sufficiently large $n$. Moreover,
$\pair{U^{n}\mathbf{1}}{\mathbf{1}}$
is periodic in $n$ and strictly
positive since $U$ maps the interior of the cone of positive functions into
itself. Thus
$\pair{U^{n}\mathbf{1}}{\mathbf{1}}$
is both bounded above and below
by strictly positive constants.
It follows that
$\lim_{n\longrightarrow\infty}
\pair{U^{n}\mathbf{1}}{\mathbf{1}}^{1/n}=1$
so that
$\lim_{n\longrightarrow\infty}\left( \alpha_{n}(S)/n!\right)^{1/n}
    = r(T)$
as claimed.
\end{proof}

This result extends Theorem~4.1 of~\cite{Elizalde}, where consecutive
patterns consisting of a single permutation were considered. Moreover,
Theorem~\ref{theorem_easy} characterizes
the limit in terms of a spectral quantity
which can be computed in many cases of interest by solving the eigenvalue
problem for the integral operator $T$.

\section{Associated graphs}

\subsection{The directed graph $H_{S}$}

In this section we study the spectrum of $T$ using the infinite graph
$H_{S}$ described in the introduction.
Recall that $\Delta_{\pi}$ denotes the
open subset of $(0,1)^{m}$ with $x_{i}\neq x_{j}$ for $i\neq j$ and
$x_{i} < x_{j}$ if and only if $\pi(i)<\pi(j)$, and let
$$
X = \bigcup_{\pi\in\mathfrak{S}_{m}} \Delta_{\pi} .
$$
Thus the complement of $X$ consists of those points $x$ with
$x_{i} = x_{j}$ for at least one pair of distinct indices $i$ and $j$
and hence is a set of measure zero.
The graph $H_{S}$ has vertex set~$X$.
Recall that the directed edges of $H_{S}$
connected points $x$ and $y$ in $X$ with $x_{j+1} = y_{j}$
for $1 \leq j\leq m-1$,
$x_{1} \neq y_{m}$, and $\Pi(x_{1},\ldots,x_{m},y_{m}) \notin S$. It
follows from the definition that $\chi(t,x_{1},\ldots,x_{m}) = 1$
if and only
if there is a directed edge from $(t,x_{1},\ldots,x_{m-1})$ to
$(x_{1},\ldots,x_{m})$. That is, the function $\chi$ encodes the edge
information of the graph~$H_{S}$.

The next lemma connects the graph $H_{S}$ to mapping properties of the
operator~$T$. It will be used to show that the operator
is positivity improving.

\begin{lemma}
Suppose that $x,y \in X$ and that there is a directed
path from $x$ to $y$ of length $k \geq m$. Suppose further that $f$ is a
non-negative continuous function such that $f$ is non-zero in a neighborhood
of $x$. Then $(T^{k}f)(y) > 0$.
\label{lemma_T_graph}
\end{lemma}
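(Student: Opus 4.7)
My plan is to unwind the hypothesis of a length-$k$ directed path into a single $(k+m)$-tuple of coordinates, and then use the fact that $\chi_{m+k}$ is locally constant away from the measure-zero set where coordinates coincide, so that the positivity of the integrand propagates to a small neighborhood.

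First, I will use formula~(\ref{equation_T_k_2}), which is available since $k\geq m$, to write
\[
(T^{k}f)(y)
   = \int_{[0,1]^{k}} \chi_{m+k}(t_{1},\ldots,t_{k},y_{1},\ldots,y_{m})
     \cdot f(t_{1},\ldots,t_{m})\, dt_{1}\cdots dt_{k}.
\]
Since $\chi_{m+k}$ and $f$ are non-negative, it suffices to exhibit an open set in $[0,1]^{k}$ on which the integrand is strictly positive. I will then translate the given directed path $v_{0}=x,v_{1},\ldots,v_{k}=y$ into a single sequence $s_{1},\ldots,s_{k+m}$ via the sliding identification $v_{i}=(s_{i+1},\ldots,s_{i+m})$; in particular $(s_{1},\ldots,s_{m})=x$ and $(s_{k+1},\ldots,s_{k+m})=y$. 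The edge condition exactly says that $\chi(s_{i},\ldots,s_{i+m})=1$ for each $i=1,\ldots,k$, so by the definition~(\ref{equation_chi_n}) one obtains $\chi_{m+k}(s_{1},\ldots,s_{k+m})=1$.

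Next I want to observe that the function $\chi$ on $[0,1]^{m+1}$ is locally constant on the complement of the (measure-zero) locus where two coordinates coincide, since there $\chi$ depends only on the pattern $\Pi$. Because every window $(s_{i},\ldots,s_{i+m})$ has all coordinates distinct (otherwise $\chi$ would vanish there), the same remains true for small perturbations, and hence $\chi_{m+k}$ is identically $1$ on a neighborhood of $(s_{1},\ldots,s_{k+m})$. Restricting to the slice where the last $m$ coordinates are fixed at $y$, I get a neighborhood $U$ of $(s_{1},\ldots,s_{k})$ in $[0,1]^{k}$ on which $\chi_{m+k}(\,\cdot\,,y)=1$. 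Independently, continuity and non-negativity of $f$, together with the hypothesis that $f$ is non-zero in a neighborhood of $x$, give an open neighborhood $V$ of $x=(s_{1},\ldots,s_{m})$ on which $f>0$.

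Finally, intersecting $U$ with $V\times(\text{a neighborhood of }(s_{m+1},\ldots,s_{k}))$ yields a non-empty open subset of $[0,1]^{k}$ containing $(s_{1},\ldots,s_{k})$ where the integrand is strictly positive, which forces $(T^{k}f)(y)>0$. The only subtle step is verifying that the windows spanning both perturbed and fixed coordinates (i.e.\ $i=k-m+1,\ldots,k$) still have distinct entries after a small perturbation of $(t_{1},\ldots,t_{k})$; but this is immediate because at the unperturbed point these windows equal $(s_{i},\ldots,s_{i+m})$, whose coordinates are already distinct, and distinctness is an open condition. This is the main bookkeeping point of the argument, and everything else is a direct unpacking of definitions.
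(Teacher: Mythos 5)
Your proposal is correct and takes essentially the same approach as the paper: both unwind the path into a single $(k+m)$-tuple, observe that $\chi_{m+k}$ equals $1$ on a neighborhood of that tuple (the paper via an explicit $\delta=\varepsilon/3$ box, you via the locally-constant observation), and then lower-bound the integral by restricting to the open region where $\chi_{m+k}(\cdot,y)=1$ and $f>0$ simultaneously. The only cosmetic difference is that the paper produces an explicit product-of-intervals lower bound $(2\delta)^{k-m}\int_{V} f > 0$, whereas you argue more softly that the integrand is positive on a non-empty open set; both are valid.
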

\begin{proof}
Assume that the directed path is
$$ x = (x_{1},\ldots,x_{m}) 
           \longrightarrow
       (x_{2},\ldots,x_{m+1})
           \longrightarrow
       \cdots
           \longrightarrow
       (x_{k+1},\ldots,x_{k+m}) = y . $$
Let $\varepsilon$
be the minimum of the following finite set
$$
\left\{ \left| x_{i}-x_{j}\right| :
          1 \leq i < j \leq k+m,~j-i \leq m \right\}
\cup
\left\{x_{i}, 1-x_{i} :  1 \leq i \leq k+m \right\}  .
$$
Observe that $\varepsilon>0$ by the definition of $X$. 
Let $\delta = \varepsilon/3$.
For $s_{i}\in\left[
x_{i}-\delta,x_{i}+\delta\right] $, $1\leq i\leq k+m$, we have
that
$$ (s_{1},\ldots,s_{m})
       \longrightarrow
   (s_{2},\ldots,s_{m+1})
       \longrightarrow
   \cdots
       \longrightarrow
   (s_{k+1},\ldots,s_{k+m}) $$
is also a directed path in $H_{S}$. It follows
that $\chi_{k+m}(s_{1},\ldots,s_{k+m})=1$ for all such $s$.
Using~(\ref{equation_T_k_2}) we may estimate
\begin{eqnarray*}
\left( T^{k}f\right) (y)
 & \geq &
\int_{x_{1}-\delta}^{x_{1}+\delta} \cdots
\int_{x_{k}-\delta}^{x_{k}+\delta}
f(t_{1},\ldots,t_{m}) dt_{1} \cdots dt_{k} \\
& = &
\left( 2\delta\right)^{k-m}
  \int_{x_{1}-\delta}^{x_{1}+\delta} \cdots
  \int_{x_{m}-\delta}^{x_{m}+\delta}
f(t_{1},\ldots,t_{m}) dt_{1}\cdots dt_{m} \\
& > & 0,
\end{eqnarray*}
where in the last step we have used the positivity of $f$ in a neighborhood
of $(x_{1},\ldots,x_{m})$.
\end{proof}

\begin{proposition}
\label{proposition_period}
Suppose that $H_{S}$ is strongly connected with period $d$.
Then there is a decomposition
$$
X = {\displaystyle\bigcup_{i=0}^{d-1}} Y_{i}
$$
of $X$ into disjoint sets $Y_{i}$ with the property that
$T : L^{2}(Y_{i}) \longrightarrow L^{2}(Y_{i+1})$, where $Y_{d}=Y_{0}$.
\end{proposition}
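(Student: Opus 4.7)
The plan is to adapt the classical cyclic decomposition of a strongly connected graph with period $d$ to the (uncountable) graph $H_S$, and then use the fact that the kernel $\chi$ of $T$ is precisely the edge-indicator of $H_S$ to see that $T$ respects the resulting partition.

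First I would define the decomposition. Fix a base vertex $v\in X$. For each $w\in X$, strong connectedness of $H_S$ produces some directed path from $v$ to $w$; define $c(w)\in\mathbb{Z}/d\mathbb{Z}$ to be the length of such a path modulo $d$. To show this is well-defined I will invoke the standard argument: given two $v$-to-$w$ paths $P_1,P_2$ of lengths $\ell_1,\ell_2$, pick (again by strong connectedness) a $w$-to-$v$ path $R$ of length $k$; then $P_1R$ and $P_2R$ are closed walks at $v$ of lengths $\ell_1+k$ and $\ell_2+k$, both lying in the semigroup $Q$ of return times to $v$ and therefore divisible by $d$, so $\ell_1\equiv\ell_2\pmod d$. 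Setting $Y_i:=c^{-1}(i)$ for $0\leq i\leq d-1$ produces sets that cover $X$ (by strong connectedness) and are pairwise disjoint (by well-definedness of $c$). Measurability of each $Y_i$ follows by writing it as the countable union $\bigcup_{k\equiv i\,(\mathrm{mod}\,d)} Z_k$, where $Z_k$ is the set of endpoints of length-$k$ paths from $v$; $Z_k$ is in turn a finite union of open cells in $X$ cut out by the overlap condition and finitely many inequalities $\chi=1$, and hence is measurable.

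Next I would verify that $T$ maps $L^2(Y_i)$ to $L^2(Y_{i+1})$. Recall that by the definition of $H_S$, the condition $\chi(t,x_1,\ldots,x_m)=1$ is equivalent to the existence of a directed edge from $(t,x_1,\ldots,x_{m-1})$ to $(x_1,\ldots,x_m)$. Take $f\in L^2(Y_i)$, so that $f=0$ almost everywhere off $Y_i$. In the formula
$$
(Tf)(x_1,\ldots,x_m)=\int_0^1\chi(t,x_1,\ldots,x_m)\cdot f(t,x_1,\ldots,x_{m-1})\,dt,
$$
the integrand is nonzero only when both the above edge is present and $(t,x_1,\ldots,x_{m-1})\in Y_i$. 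Concatenating a $v$-to-$(t,x_1,\ldots,x_{m-1})$ path of length $\equiv i\pmod d$ with this edge produces a $v$-to-$(x_1,\ldots,x_m)$ path of length $\equiv i+1\pmod d$, forcing $(x_1,\ldots,x_m)\in Y_{i+1}$. Contrapositively, $(Tf)(x)=0$ for almost every $x\notin Y_{i+1}$, which is exactly the claim.

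The main obstacle I anticipate is the measure-theoretic bookkeeping around null sets: justifying that the purely combinatorial coloring of vertices of $H_S$ yields measurable subsets of $X$, and that the pointwise edge/integrand correspondence in the last step survives passage to ``almost every $x$''. Both points rely on the facts, already used elsewhere in this section, that $X$ differs from $[0,1]^m$ only by a set of measure zero and that $\chi$ is continuous off a set of measure zero in $[0,1]^{m+1}$, so the ``path of given length'' relation is measurable on $X\times X$.
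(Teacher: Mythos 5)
Your proposal follows essentially the same route as the paper: both fix a base vertex $v$, color each vertex of $H_S$ by the length of a $v$-path modulo $d$, set $Y_i$ to be the $i$th color class, and then use the equivalence between $\chi(t,x_1,\ldots,x_m)=1$ and the existence of the directed edge $(t,x_1,\ldots,x_{m-1})\to(x_1,\ldots,x_m)$ to see that $T$ shifts color classes by one. The only cosmetic difference is in establishing measurability of the $Y_i$ --- you write each as a countable union of projections of open sets, while the paper perturbs paths to show that each $Y_i$ is open and in fact a union of full cells $\Delta_\pi$ --- but the decomposition and the core mapping argument coincide.
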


\begin{proof}
Pick a base vertex $v$ of $H_{S}$. Let $X_{k}$ be the set of all vertices in
$H_{S}$ that can be reached from $v$ in $k$ steps, and let $Q$ be the subset
of the non-negative integers defined by
$$ Q = \left\{ k \: : \: v \in X_{k} \right\} . $$
Then $Q$ is a semigroup under addition and generates a subgroup of the
integers $\mathbb{Z}$. A subgroup of $\mathbb{Z}$ has the form $d\mathbb{Z}$
for some positive integer $d$; in this case, $d$ is the period of the graph
$H_{S}$.

Now define
$$ Y_{i} = \bigcup_{j:~j\equiv i \bmod d} X_{j} . $$
Observe that every directed edge in the graph $H_{S}$ goes from some $Y_{i}$
to the next~$Y_{i+1}$ (with addition modulo $d$). Also, observe that the
sets $Y_{i}$ are pairwise disjoint.

We claim that each $Y_{i}$ is open. To see this, suppose that $y\in Y_{i}$.
Pick a path from some vertex $x$ to the vertex $y$ having length greater
than $2m$. We can perturb this path in a small neighborhood of~l
$y$ using a
variant of the argument used at the beginning of Lemma~\ref{lemma_T_graph}
and conclude that $Y_{i}$ is open.

Now pick a permutation $\pi\in\mathfrak{S}_{m}$.
Since the sets $Y_{0}, \ldots, Y_{d-1}$ are pairwise disjoint
we have that
$\Delta_{\pi} = \Delta_{\pi} \cap X
              = \cup_{i=0}^{d-1} (\Delta_{\pi} \cap Y_{i})$.
Note $\Delta_{\pi}$ is a connected set
and $\Delta_{\pi} \cap Y_{i}$ are all open.
A connected set can only be the disjoint union
of one open set and hence there exists a unique index $i$
such that $\Delta_{\pi} \subseteq Y_{i}$.
Hence each set $Y_{i}$ is the disjoint
union of the sets $\Delta_{\pi}$.

Finally, suppose that $f$ is a continuous function on
$\left[0,1\right]^{m}$ with support in the set $Y_{i}$. We claim
that $T f$ is supported in the next set $Y_{i+1}$.
To see this, note that
$\chi(t,x_{1},\ldots,x_{m}) = 1$ if and only if there is a directed
edge from $(t,x_{1},\ldots,x_{m-1})$ to $(x_{1},\ldots,x_{m})$.
Assuming that $f(t,x_{1},\ldots,x_{m-1})$ is non-zero
where
$(t,x_{1},\ldots,x_{m-1})$ belongs to the set $Y_{i}$.
Since $f$ is continuous function,
$f$ is supported in a neighborhood
of the point $(t,x_{1},\ldots,x_{m-1})$.
By applying the definition of the operator $T$,
we have the function $T f$ supported
in a neighborhood of 
the point $(x_{1},\ldots,x_{m})$
in the next set $Y_{i+1}$.
That is, $T f$ is supported in $Y_{i+1}$.
\end{proof}

The next lemma is a straightforward consequence of
the definition of the graph $H_{S}$
and hence the proof is omitted.

\begin{lemma}
Suppose that $\alpha:(0,1) \longrightarrow (0,1)$ is a strictly increasing
function. If 
$$ (x_{1},\ldots,x_{m})
         \longrightarrow
   (x_{2},\ldots,x_{m+1})
         \longrightarrow
       \cdots
         \longrightarrow
   (x_{k+1},\ldots,x_{k+m}) $$
is a directed path in $H_{S}$,
then so is
$$ \left(\alpha(x_{1}),\ldots,\alpha(x_{m})\right)
         \longrightarrow
   \left(\alpha(x_{2}),\ldots,\alpha(x_{m+1})\right)
         \longrightarrow
       \cdots
         \longrightarrow
   \left(\alpha(x_{k+1}),\ldots,\alpha(x_{k+m})\right) . $$
\end{lemma}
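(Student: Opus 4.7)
The plan is to reduce the claim to the single-edge case, since a directed path is just a sequence of directed edges sharing $m-1$ coordinates, and the shared-coordinate condition $\alpha(x_{j+1})=\alpha(x_{j+1})$ is automatic.

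So I would first verify that each individual edge $(x_j,\ldots,x_{j+m-1})\to(x_{j+1},\ldots,x_{j+m})$ in $H_S$ maps to an edge $(\alpha(x_j),\ldots,\alpha(x_{j+m-1}))\to(\alpha(x_{j+1}),\ldots,\alpha(x_{j+m}))$. By the definition of $H_S$, an edge requires three things for the $(m+1)$-tuple $(x_j,x_{j+1},\ldots,x_{j+m})$: (i) the tuple lies in the relevant open stratum, i.e., all entries are distinct and lie in $(0,1)$; (ii) $x_j\neq x_{j+m}$ (which is redundant with (i) but stated separately); and (iii) $\Pi(x_j,\ldots,x_{j+m})\notin S$. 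Strict monotonicity of $\alpha$ gives that $\alpha$ is injective, so distinctness of coordinates is preserved, and the codomain condition $\alpha\colon(0,1)\to(0,1)$ ensures the image tuple still lies in $(0,1)^{m+1}$. Hence (i) and (ii) are immediate.

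For (iii), I would observe that $\Pi(\alpha(x_j),\ldots,\alpha(x_{j+m}))=\Pi(x_j,\ldots,x_{j+m})$. This is the key point: the permutation pattern $\Pi$ depends only on the relative order of the coordinates, and since $\alpha$ is strictly increasing we have $\alpha(x_i)<\alpha(x_k)$ if and only if $x_i<x_k$. Thus the order-isomorphism type is preserved, and the image tuple yields the same permutation in $\mathfrak{S}_{m+1}$, which by hypothesis is not in $S$. Therefore the image edge satisfies the defining conditions of $H_S$.

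There is no real obstacle; the only thing worth being careful about is making the appeal to ``relative order is preserved'' explicit, since this is precisely the content of ``strictly increasing.'' Once each individual edge is verified, concatenating them yields a directed path in $H_S$ of the claimed form, completing the proof.
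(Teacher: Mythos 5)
Your proof is correct, and it is exactly the argument the paper has in mind: the paper explicitly omits the proof, calling the lemma ``a straightforward consequence of the definition of the graph $H_{S}$,'' and your edge-by-edge reduction via order-preservation of $\Pi$ under the strictly increasing map $\alpha$ is precisely that straightforward consequence.
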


We next show that there is an upper bound on
the length of the directed path between any two
vertices in the case when the graph $H_{S}$
is strongly connected.
\begin{proposition}
Suppose that $H_{S}$ is strongly connected with period $d$.
Then there is a positive integer $N$, a multiple of $d$, such that
for any two points $x$ and $z$ in the same component $Y_{i}$
there is a path from $x$ to $z$ in the graph $H_{S}$
of length $N$.
Especially, between any two vertices in the graph $H_{S}$
there is a directed path of length at most $N+d-1$.
\label{proposition_N}
\end{proposition}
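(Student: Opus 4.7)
The plan is to combine Proposition~\ref{proposition_period}, the strictly-increasing reparametrization lemma stated immediately before this Proposition, and a finite case analysis over \emph{interleaving types} of the endpoint pairs. By Proposition~\ref{proposition_period}, each $\Delta_\pi$ lies entirely in a single $Y_i$, so the main claim reduces to the following pairwise version. For each ordered pair $(\pi,\sigma)$ in $\mathfrak{S}_m \times \mathfrak{S}_m$ with $\Delta_\pi$ and $\Delta_\sigma$ in a common $Y_i$ and for each permutation $\tau \in \mathfrak{S}_{2m}$ that can arise as the order type of $(x_1,\ldots,x_m,z_1,\ldots,z_m)$ for some $x \in \Delta_\pi$, $z \in \Delta_\sigma$, I would show the existence of a length $N_{\pi,\sigma,\tau}$ (automatically a multiple of $d$) with a directed path from $x$ to $z$ of that length for every such $(x,z)$. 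Since only finitely many triples $(\pi,\sigma,\tau)$ arise, taking a common length over them will give $N$.

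For a fixed triple $(\pi,\sigma,\tau)$, I would pick reference points $x^* \in \Delta_\pi$ and $z^* \in \Delta_\sigma$ for which the concatenated tuple $(x^*_1,\ldots,x^*_m,z^*_1,\ldots,z^*_m)$ has order type $\tau$, then invoke strong connectivity of $H_S$ to obtain a directed path in $H_S$ from $x^*$ to $z^*$ of some length $N_{\pi,\sigma,\tau}$. Given an arbitrary $(x,z)$ of interleaving $\tau$, the common order type of the two $2m$-tuples lets me define a strictly increasing bijection $\alpha:(0,1) \to (0,1)$ with $\alpha(x^*_i) = x_i$ and $\alpha(z^*_j) = z_j$: declare $\alpha$ on the $2m$ prescribed points (consistent because of the matching interleaving) and extend monotonically. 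Applying the reparametrization lemma, the coordinate-wise image of the reference path by $\alpha$ is a directed path of length $N_{\pi,\sigma,\tau}$ from $x$ to $z$.

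To merge the finitely many $N_{\pi,\sigma,\tau}$ into a single $N$, I would work in the finite graph $G_S$, which inherits strong connectivity and the same period $d$ from $H_S$ by projection. Standard finite-graph theory shows that for every pair of $G_S$-vertices in the same period class and every sufficiently large $N$ divisible by $d$, there is a walk in $G_S$ of length $N$ realizing the same endpoint interleaving $\tau$, obtained by inserting appropriate closed walks (detours) at interior vertices of the original reference walk. Lifting such an extended walk back to a path in $H_S$ is possible because the inserted interior steps do not change the relative order of the endpoint coordinates; combining with the reparametrization argument of the previous paragraph produces length-$N$ paths for arbitrary $(x,z)$ of any fixed interleaving, and hence for every pair $(x,z) \in \Delta_\pi \times \Delta_\sigma$ inside a common $Y_i$.

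For the \emph{Especially} statement, given $x \in Y_i$ and $z \in Y_j$ I would set $\delta \in \{0,1,\ldots,d-1\}$ with $\delta \equiv j-i \pmod{d}$, take any $\delta$-step path in $H_S$ starting from a reference point in $\Delta_{\Pi(x)}$, transport its start to $x$ by the reparametrization argument to produce a $\delta$-step path from $x$ to some $y \in Y_j$, and then apply the main statement within $Y_j$ to connect $y$ to $z$ in exactly $N$ steps. The concatenation has length $\delta + N \leq N + d - 1$, as claimed. I expect the main technical obstacle to be the length-unification step: one must verify that the detours inserted in $G_S$ actually lift to valid $H_S$-realizations compatible with the prescribed endpoint interleaving, which relies on the fact that a long enough inserted closed walk provides enough interior slack to be realized without disturbing the endpoint coordinates.
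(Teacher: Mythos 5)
Your plan shares two essential ideas with the paper's proof: the strictly-increasing reparametrization lemma and a finiteness reduction. But both the route and the crucial length-unification step differ, and the latter has a genuine gap.

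The gap is in the sentence claiming that detours inserted in $G_{S}$ ``lift to valid $H_{S}$-realizations compatible with the prescribed endpoint interleaving.'' The lifting lemma for $G_{S}$-walks (Lemma~\ref{lemma_lifting}) is a forward process: starting from a prescribed $H_{S}$-vertex it produces a lift, but the terminal vertex of that lift is only controlled up to its order type, not as a specific point. So if you take the reference $H_{S}$-path from $x^{*}$ to $z^{*}$, project it to $G_{S}$, splice in a closed $G_{S}$-walk at an interior vertex, and lift the extended walk from $x^{*}$, the result ends at some point of $\Delta_{\sigma}$ but generally \emph{not} at $z^{*}$. Equivalently, if you try to realize the inserted $G_{S}$-cycle as an $H_{S}$-subpath starting at the interior vertex $v_{k}$, it will return to some $v_{k}'$ with $\Pi(v_{k}')=\Pi(v_{k})$, not to $v_{k}$, so you cannot simply resume the original reference path. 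The ``interior slack'' heuristic does not resolve this, because the obstruction is about where the lift lands, not about having room to avoid the endpoint coordinates. What one actually needs is that the semigroup of lengths of $H_{S}$-cycles based at a \emph{fixed} vertex eventually contains every sufficiently large multiple of $d$, which is a statement internal to $H_{S}$; passing through $G_{S}$ and lifting is not a substitute for it.

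The paper avoids both this difficulty and the interleaving-type case analysis with a single device: for each class $Y_{i}$ it fixes a waypoint $y_{i}$ with all coordinates less than $1/2$, and fixes the reference points $x_{\pi}$ with all coordinates greater than $1/2$. Every path $x \to z$ inside $Y_{i}$ is routed as $x \to \varepsilon y_{i} \to \varepsilon y_{i} \to z$, where $\varepsilon$ is the smallest coordinate of $x$ and $z$. Because $\varepsilon y_{i}$ lies coordinate-wise strictly below both $x$ and $z$, the required monotone maps (one for each leg) always exist regardless of how the coordinates of $x$ and $z$ interleave --- including the degenerate cases where $x$ and $z$ share coordinates, which fall outside your $\tau\in\mathfrak{S}_{2m}$ parametrization. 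Finally, the padding is done by inserting $H_{S}$-cycles at $y_{i}$ itself (and then scaling by $\varepsilon$ to put them at $\varepsilon y_{i}$); since there are only $d$ waypoints $y_{0},\dots,y_{d-1}$, one takes a uniform threshold $M$ for their cycle-length semigroups. Your reparametrize-the-whole-path-at-once strategy can also be made to work if the $G_{S}$ step is replaced by insertion of $H_{S}$-cycles at a fixed interior vertex of each of the finitely many reference paths, but as written, the lifting claim is the missing justification.
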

\begin{proof}
For two vertices $x$ and $y$ of the graph $H$,
let $D(x,y)$ denote the length of the shortest path
from $x$ to $y$. Observe that this is not a distance
since it is not symmetrical in general.

For each permutation $\pi \in {\mathfrak S}_{m}$
pick a point $x_{\pi}$ in
$\Delta_{\pi}$,
such that all its coordinates are greater than~$1/2$.
Similarly, for all $0 \leq i \leq d-1$ pick
a point $y_{i}$ in $Y_{i}$, 
such that all its coordinates are less than~$1/2$.
Let $K$ denote the maximum of the finite set
$$   \{  D(x_{\pi}, y_{i})
           \: : \: 
         \Delta_{\pi} \subseteq Y_{i} \}  
  \:\: \cup \:\:
     \{  D(y_{i}, x_{\tau})
           \: : \: 
         \Delta_{\tau} \subseteq Y_{i} \}  .  $$
Note that $K$ is a multiple of the period $d$.
Let $Q_{i}$ denote the semigroup
$$ Q_{i} = \left\{ k \:\: : \:\:
   \text{ there is a path from $y_{i}$ to $y_{i}$ of exactly length $k$}
                  \right\} . $$
Note that $Q_{i} \subseteq d \cdot \mathbb{N}$.
Furthermore, there is no multiple $e$ of the period $d$
such that $Q_{i} \subseteq e \cdot \mathbb{N}$.
Hence there exists a positive integer $m_{i}$ such that
$d \cdot \mathbb{N} + m_{i} \subseteq Q_{i}$.
That is, there is path from $y_{i}$ to $y_{i}$ of any length
which is a multiple of $d$ and greater than or equal to $m_{i}$.
Let $M$ be the maximum of $m_{0}$ through $m_{d-1}$.
Note again that $M$ is a multiple of $d$.

Let $N = 2 \cdot K + M$. Pick two permutations
$\pi$ and $\tau$ such that
$\Delta_{\pi}, \Delta_{\tau} \subseteq Y_{i}$.
We claim that there is a path from
the point $x_{\pi}$ to the point $x_{\tau}$ of length $N$.
We find this path by picking three paths.
First, choose a path $p_{1}$ from $x_{\pi}$ to $y_{i}$ of length at most $K$.
Second, choose a path $p_{3}$ from $y_{i}$ to $x_{\tau}$ of length at most $K$.
Finally, pick a path $p_{2}$ from $y_{i}$ to $y_{i}$ of length
$M + 2 \cdot K - D(x_{\pi}, y_{i}) - D(y_{i}, x_{\tau})$
which is at least $M$. By concatenating the three paths
$p_{1}$, $p_{2}$ and $p_{3}$, the result follows.

Now let show that there is a path from any point
$x$ in $Y_{i}$ to any other point $z$ in $Y_{i}$
of length~$N$. 
Let $\pi$ and $\tau$ be the permutations
$\pi = \Pi(x)$ and $\tau = \Pi(z)$.
We do so using that we did choose
the coordinates of $x_{\pi}$ and $x_{\tau}$ greater
than the coordinates of $y_{i}$.
Namely, let $\varepsilon > 0$
be the smallest coordinate of the two points $x$ and $z$.
Let $y$ be the point $\varepsilon \cdot y_{i}$.
Now there is a monotone function $\alpha$ such that
$\alpha(x_{\pi}) = x$ and $\alpha(y_{i}) = y$.
Similarly, there is a monotone function $\beta$ such that
$\beta(y_{i}) = y$ and $\beta(x_{\tau}) = z$.
Concatenating the three paths
$\alpha(p_{1})$, $\varepsilon \cdot p_{2}$ and $\beta(p_{3})$
we obtain a path from $x$ to $z$ of length $N$.
\end{proof}

\begin{proof}[Proof of Theorem~\ref{theorem_period}]
It follows from Proposition~\ref{proposition_period}
that $T^{d}:L^{2}(Y_{i}) \longrightarrow L^{2}(Y_{i})$. We
will denote by $A$ the restriction of $T^{d}$ to $L^{2}(Y_{0})$. Choosing a
positive integer~$p$ with $p \cdot d \geq m$
we see that $A^{p}$ is a compact
operator from $L^{2}(Y_{0})$ to itself. The operator $A^{p}$ has discrete
spectrum which may accumulate only at $0$ so the same is true for $A$.
We claim that, if $\varphi$ is an eigenfunction of $A$ with nonzero
eigenvalue~$\lambda^{d}$,
then $T^{i}\varphi$ is a nonzero eigenfunction of $T^{d}$ with
eigenvalue $\lambda^{d}$ and support in~$Y_{i}$. The only nontrivial part
of this claim is that $T^{i}\varphi\neq 0$. To see this, note that
$T^{d-i}(T^{i}\varphi)=\lambda^{d} \cdot \varphi$
so $T^{i} \varphi$ cannot be zero since
$\lambda \neq 0$ and $\varphi$ is a nontrivial eigenfunction.

Suppose now that $\lambda^{d}\in\sigma(A)$,
let $\varphi$ be an eigenfunction of
$A$ corresponding to the eigenvalue $\lambda^{d}$, let $\omega$ be a $d$th
root of unity, and let
$$
\psi
    =
\varphi
    +
\frac{1}{\lambda \cdot \omega} \cdot T\varphi
    +
\cdots
    + \left( \frac{1}{\lambda \cdot \omega} \right)^{d-1}
      \cdot T^{d-1}\varphi .
$$
The function $\psi$ is nonzero because the right-hand terms are nonzero and
have disjoint supports. A direct computation shows that
$T\psi=\omega \cdot \lambda \cdot \psi$
so the spectrum of $T$ contains all of the numbers $\lambda \cdot \omega$
where $\lambda^{d}\in\sigma(A)$. On the other hand, any eigenvalue~$\mu$ of
$T$ gives rise to an eigenvalue $\mu^{d}$ of $T^{d}$, so the nonzero spectrum
of $T$ consists exactly of the numbers $\omega \cdot \lambda$
where $\omega$ is a
$d$th root of unity and $\lambda^{d}$ is an eigenvalue of $T^{d}$.

By combining Lemma~\ref{lemma_T_graph}
and Proposition~\ref{proposition_N}
we have that for a non-negative, but non-zero, function $f$
in $L^{2}(Y_{0})$ that
$T^{N}(f) = A^{N/d}(f)$ is a positive function.
Hence the operator $A$ is positivity improving
and by Kre\u{\i}n and Rutman~\cite[Theorem~6.3]{Krein_Rutman}
the operator $A$ has a positive spectral radius $r(A)$.
Furthermore, the spectral radius $r(A)$ is
a simple eigenvalue, all other eigenvalues are smaller
in modulus and the eigenfunction $\varphi$ corresponding to $r(A)$
is positive.
Hence the spectral radius of~$T$, $r(T) = \sqrt[n]{r(A)}$,
is positive and a simple eigenvalue of $T$. Finally,
the positive function $\varphi$ is also an eigenfunction of $T$
corresponding to the eigenvalue $r(T)$.
\end{proof}

\subsection{The directed graph $G_{S}$}

We examine relations between the infinite graph $H_{S}$
and the finite graph $G_{S}$. We need the following lemmas,

\begin{lemma}
Let $x = (x_{1}, \ldots, x_{m})$ be a vertex in $H_{S}$
such that $\Pi(x_{1}, \ldots, x_{m}) = \pi$.
Furthermore assume
there is an edge in $G_{S}$ labeled
$\tau$ leaving the vertex $\pi$.
Then there exists $x_{m+1}$ in $(0,1)$ such that
$\Pi(x_{1}, \ldots, x_{m+1}) = \tau$.
That is, there is an edge in $H_{S}$
leaving the vertex $x$.
\end{lemma}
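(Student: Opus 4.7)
The plan is to exhibit the required $x_{m+1}$ directly by choosing it in an appropriate gap between the order statistics of $x_1, \ldots, x_m$. First I would unpack the hypothesis: an edge labeled $\tau$ leaving $\pi$ in $G_S$ means $\tau \in \mathfrak{S}_{m+1} \setminus S$ with $\Pi(\tau_1, \ldots, \tau_m) = \pi$. Setting $r = \tau_{m+1}$, the task reduces to finding $x_{m+1} \in (0,1)$ that has rank exactly $r$ among $x_1, \ldots, x_{m+1}$, since then automatically $\Pi(x_1, \ldots, x_{m+1}) = \tau$.

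Let $x_{(1)} < x_{(2)} < \cdots < x_{(m)}$ denote the sorted coordinates of $x$, which are pairwise distinct because $x \in X$. Set $x_{(0)} = 0$ and $x_{(m+1)} = 1$. Because every $x_i$ lies strictly in $(0,1)$, the open interval $I_r = (x_{(r-1)}, x_{(r)})$ is nonempty, and I would pick any $x_{m+1} \in I_r$. Then exactly $r-1$ of the values $x_1, \ldots, x_m$ lie below $x_{m+1}$ and exactly $m+1-r$ lie above, which combined with $\Pi(x_1, \ldots, x_m) = \pi = \Pi(\tau_1, \ldots, \tau_m)$ forces $\Pi(x_1, \ldots, x_{m+1}) = \tau$.

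To conclude the edge statement, I would verify the definition of an $H_S$-edge from $x$ to $y = (x_2, \ldots, x_{m+1})$: the matching condition $y_j = x_{j+1}$ for $1 \leq j \leq m-1$ holds by construction; the distinctness requirement $x_1 \neq x_{m+1}$ is automatic because $x_1, \ldots, x_{m+1}$ are pairwise distinct (being a realization of the permutation $\tau$); the forbidden-pattern condition $\Pi(x_1, \ldots, x_{m+1}) = \tau \notin S$ holds precisely because $\tau$ labels an edge of $G_S$; and $y$ itself lies in $X$ since its coordinates are distinct and in $(0,1)$.

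I do not expect any real obstacle; the argument is essentially bookkeeping. The only subtlety worth flagging is the boundary cases $r = 1$ and $r = m+1$, which are handled by the conventions $x_{(0)} = 0$ and $x_{(m+1)} = 1$. These conventions are legitimate because $H_S$ is defined on the open cube $(0,1)^m$, so each $x_i$ is strictly bounded away from the endpoints $0$ and $1$ and the corresponding boundary intervals $(0, x_{(1)})$ and $(x_{(m)}, 1)$ are genuinely open and nonempty.
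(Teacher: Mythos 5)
Your proposal is correct and follows essentially the same approach as the paper: set $r = \tau_{m+1}$ and choose $x_{m+1}$ so that it exceeds exactly $r-1$ of the coordinates of $x$, which the paper phrases directly and you realize by picking a point in the appropriate open gap between consecutive order statistics. The extra bookkeeping verifying the $H_S$-edge conditions is a correct elaboration but not a different argument.
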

\begin{proof}
Observe that $\tau(m+1)$ is bigger than exactly $\tau(m+1)-1$ of the
numbers $\tau(1), \ldots, \tau(m)$.
Hence pick $x_{m+1}$ such that it is bigger than exactly
$\tau(m+1)-1$ of the numbers $x_{1}, \ldots, x_{m}$.
\end{proof}

Iterating this lemma we obtain:
\begin{lemma}
Given a directed path from $\pi$
to $\sigma$ in the graph $G_{S}$.
Let $x$ be a vertex in $H_{S}$
such that $\Pi(x) = \pi$.
Then this directed path can be lifted
to a directed path in $H_{S}$
that ends with a vertex $y$ such that
$\Pi(y) = \sigma$.
\label{lemma_lifting}
\end{lemma}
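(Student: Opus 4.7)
The plan is a straightforward induction on the length $k$ of the given directed path $\pi = \pi^{(0)} \to \pi^{(1)} \to \cdots \to \pi^{(k)} = \sigma$ in the graph $G_S$, with the preceding single-edge lifting lemma supplying the inductive step. The base case $k = 0$ requires no work: the zero-length path consisting of $x$ alone is already a path in $H_S$, and $\Pi(x) = \pi = \sigma$ by hypothesis.

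For the inductive step, I would first apply the induction hypothesis to the initial sub-path $\pi^{(0)} \to \cdots \to \pi^{(k-1)}$. This yields a directed path in $H_S$ starting at $x$ and terminating at some vertex $y' = (y_1', \ldots, y_m')$ with $\Pi(y') = \pi^{(k-1)}$. Let $\tau \in \mathfrak{S}_{m+1} \setminus S$ be the label of the final edge $\pi^{(k-1)} \to \pi^{(k)}$ of the original path. Applying the preceding lemma to $y'$ and $\tau$, I obtain a coordinate $y_{m+1}' \in (0,1)$ such that $\Pi(y_1', \ldots, y_{m+1}') = \tau$. Setting $y = (y_2', \ldots, y_{m+1}')$, I then check that $y' \to y$ is a genuine edge of $H_S$ and that $\Pi(y) = \pi^{(k)} = \sigma$, whereupon concatenation with the previously constructed path produces the desired lift.

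The only thing left to verify, which I would dispatch in a single sentence, is that the extended sequence really is an edge of $H_S$: namely, that $y_1' \neq y_{m+1}'$ and that $\Pi(y_1', \ldots, y_{m+1}') \notin S$. Both conditions are immediate from the fact that $\tau$ is a well-defined permutation in $\mathfrak{S}_{m+1} \setminus S$, so there is no real obstacle here; the combinatorial content of the argument has already been absorbed into the single-edge lifting lemma, and the present statement is simply its inductive closure.
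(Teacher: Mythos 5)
Your proposal is correct and takes essentially the same approach as the paper: the paper simply states that the result follows by ``iterating'' the preceding single-edge lifting lemma and omits the details, while you make the induction explicit, correctly verifying that the new step is a genuine edge of $H_S$ (distinct endpoints and $\tau \notin S$) and that $\Pi(y) = \sigma$ follows from the edge-labeling convention in $G_S$.
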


Now we give a sufficient condition for $H_{S}$ to be strongly connected
in terms of the graph $G_{S}$.

\begin{proposition}
Let $S\subseteq\mathfrak{S}_{m+1}$, suppose that $G_{S}$ is strongly
connected, and suppose that the two monotone permutations $12\cdots (m+1)$
and $(m+1) \cdots 2 1$ do not belong to the set $S$.
Then the graph~$H_{S}$ is ergodic.
\label{proposition_G_to_H}
\end{proposition}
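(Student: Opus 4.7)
The plan is to establish separately that $H_{S}$ is strongly connected and that its period equals $1$; together these give ergodicity. For strong connectivity, I fix arbitrary $x, y \in X$ with $\Pi(x) = \pi$ and $\Pi(y) = \sigma$ and aim to construct an $H_{S}$-path from $x$ to $y$. The two key ingredients are Lemma~\ref{lemma_lifting}, which lifts any directed $G_{S}$-path starting at $\pi$ to a directed $H_{S}$-path starting at the given $x$, and the self-loops at the vertices $12 \cdots m$ and $m \cdots 2 1$ in $G_{S}$, which exist precisely because the two monotone permutations of length $m+1$ are excluded from $S$. Lifting such a self-loop produces a one-parameter family of admissible new coordinates, and this is the degree of freedom I plan to exploit to steer the lifted endpoint precisely to $y$.

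Concretely, I would concatenate five stages. First, lift a $G_{S}$-path from $\pi$ to $12\cdots m$ starting at $x$. Second, apply $m$ increasing self-loops at $12 \cdots m$, selecting the new coordinates to form any prescribed increasing $m$-tuple close to $1$. Third, lift a $G_{S}$-path from $12 \cdots m$ to $m \cdots 2 1$. Fourth, apply $m$ decreasing self-loops at $m \cdots 2 1$ to reach a point $(b_{1},\ldots,b_{m}) \in \Delta_{m \cdots 2 1}$ with every $b_{j}$ strictly smaller than every coordinate of $y$. Fifth, lift a carefully chosen $G_{S}$-path from $m \cdots 2 1$ to $\sigma$ of length $m$, taking the inserted new coordinates to be $y_{1}, y_{2}, \ldots, y_{m}$ in turn, so that the endpoint coincides with $y$. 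In stage (v), since every $b_{j}$ is below every $y_{k}$, the relative order of each intermediate window $(b_{i},\ldots,b_{m},y_{1},\ldots,y_{i})$ is entirely forced by the relative order of $y_{1},\ldots,y_{i}$, and strong connectivity of $G_{S}$ supplies a $G_{S}$-path whose consecutive edges realize exactly those forced patterns.

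For period $1$: once strong connectivity is in hand, the self-loop at $12 \cdots m$ in $G_{S}$ lifts by Lemma~\ref{lemma_lifting} to a length-one edge of $H_{S}$ from a point of $\Delta_{12 \cdots m}$ to another point of the same $\Delta_{12 \cdots m}$. Applying Proposition~\ref{proposition_period} to the strongly connected graph $H_{S}$ of some period $d$ yields a partition $X = \bigcup_{i=0}^{d-1} Y_{i}$ in which each $\Delta_{\pi}$ lies in a single $Y_{i}$; the single edge above then stays inside one $Y_{i}$, which forces $d = 1$. I expect the main obstacle of the whole argument to lie in stage (v) of the strong connectivity construction: rigorously verifying that the forced intermediate windows always correspond to valid (non-forbidden) $G_{S}$-edges and that a matching $G_{S}$-path of the required length exists, which ultimately reduces to combining the combinatorial freedom built up in stages (ii) and (iv) with the strong connectivity hypothesis on $G_{S}$.
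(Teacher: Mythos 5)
Your period-$1$ half is correct, and in fact cleaner than the paper's: lifting the self-loop at $12\cdots m$ to an $H_S$-edge that stays inside $\Delta_{12\cdots m}$ and then invoking the structure of Proposition~\ref{proposition_period} (each $\Delta_\pi$ lies in a single $Y_i$, every edge goes $Y_i \to Y_{i+1}$) immediately forces $d=1$, whereas the paper modifies the path-length argument and concatenates with an extra cycle.

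Stage~(v) of your strong-connectivity argument, however, has a genuine gap, precisely where you flagged the ``main obstacle.'' Once you fix a decreasing base point $(b_1, \ldots, b_m)$ with every $b_j$ below every $y_k$, the pattern of each window $(b_i, \ldots, b_m, y_1, \ldots, y_i)$ is completely forced; there is no remaining freedom to adjust it, and strong connectivity of $G_S$ gives you no control over whether such a forced pattern lies in $S$. Indeed it may: take $m=2$ and $S=\{213\}$. Then $123\notin S$, $321\notin S$, and $G_S$ is still strongly connected (the vertex $21$ keeps the outgoing edge labeled $312$), so the hypotheses hold. Yet the first step of your stage~(v), from $(b_1,b_2)$ to $(b_2,y_1)$, requires the $G_S$-edge labeled $\Pi(b_1,b_2,y_1)=213$, which is forbidden. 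So your stage~(v) is already blocked at its first step; strong connectivity guarantees \emph{some} $G_S$-path from $m\cdots 21$ to $\sigma$, not the particular forced one of length $m$.

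The key idea you are missing, and which the paper uses, is to dodge the endpoint-control problem by passing to the reverse graph $G_S^*$. Lift a $G_S^*$-path from $\sigma$ to $12\cdots m$ starting at $y$, then reverse it; this yields a path in $H_S$ \emph{ending} at $y$ whose \emph{starting} point $w\in\Delta_{12\cdots m}$, with $w_1<\cdots<w_m$, you do control. With both a decreasing anchor $z$ (reached by lifting a $G_S$-path from $\pi$ to $m\cdots 21$ starting at $x$) and an increasing anchor $w$ in hand, all remaining splices can be carried out by paths whose every window is a pure monotone pattern $12\cdots(m+1)$ or $(m+1)\cdots 21$, which the hypotheses explicitly remove from $S$. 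The paper never has to certify any other forced pattern as non-forbidden, which is exactly the certification your stage~(v) cannot supply.
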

\begin{proof}
We first prove that $H_{S}$ is strongly connected.
Let $x=(x_{1},\ldots,x_{m})$ and $y=(y_{1},\ldots,y_{m})$ be two vertices of
$H_{S}$, and let $\pi=\Pi(x)$ and $\sigma=\Pi(y)$. Since $G_{S}$ is strongly
connected we can find a directed path from $\pi$ to $m\cdots 21$ in $G_{S}$.
This directed path lifts to a directed path from $x$ to
$z=(z_{1},\ldots,z_{m})$ in $H_{S}$, where $z_{1}>z_{2}>\cdots>z_{m}$.

For any directed graph $G$ define the reverse graph $G^{*}$ to the
graph $G$ where we reverse the direction of each edge. In the reverse graph
$G_{S}^{*}$ we have a directed path from $\sigma$ to $12\cdots m$. Lifting
this directed path to a directed path in $H_{S}^{*}$ and then reversing
the path, we obtain a directed path from $w=(w_{1},\ldots,w_{m})$ to $y$
in~$H_{S}$, where $w_{1}<\cdots<w_{m}$.

Finally, there is a directed path from $m\cdots 21$ to $12\cdots m$
in $G_{S}$.
Hence we know that there is a directed path from
$u=\left( u_{1},\ldots,u_{m}\right)$
to $v=(v_{1},\ldots,v_{m})$ in $H_{S}$, where
$u_{1}>\cdots>u_{m}$ and $v_{1}<\cdots<v_{m}$.

Choose $\alpha$ so that $0<\alpha\,<\min(z_{m},w_{1})$. We then have a
directed path from $\alpha\cdot u$ to $\alpha\cdot v$. Now, there is a
directed path from $z$ to $\alpha\cdot u$ of length $m$, namely
$$  z = (z_{1},\ldots,z_{m})
              \longrightarrow
        (z_{2},\ldots,z_{m},\alpha u_{1})
              \longrightarrow
           \cdots
              \longrightarrow
        (z_{m},\alpha u_{1},\ldots,\alpha u_{m-1})
              \longrightarrow
        (\alpha u_{1},\ldots,\alpha u_{m}) = \alpha \cdot u   $$
using the fact that $(m+1)\cdots 21$ is not forbidden. We can now
concatenate these five directed paths to obtain a path from $x$ via $z$, via
$\alpha\cdot u$, via $\alpha\cdot v$, via $w$, to $y$.

Since $G_{S}$ is strongly connected and has $m!$ vertices, an upper bound on
the length of the directed paths in $G_{S}$ chosen above is $m!-1$. Hence,
the path that we have constructed has length at most $3(m!-1)+2m$.

To observe that $H_{S}$ has period $1$
note that we can construct
a directed path from the vertex $x$ to the vertex
$y$ that has length one more than the above construction.
Namely, the path from $\pi$ to
$m\cdots 21$ can be extended by adding the loop
$(m+1) \cdots 2 1$ at the end.
Now by concatenating these two paths with a path from $y$ to $x$
we obtain two cycles whose lengths differ by one.
Since the greatest common divisor of two consecutive
integers is one, the graph $H_{S}$ is ergodic.
\end{proof}

Recall that a permutation $\pi$ in $\mathfrak{S}_{n}$
is {\em indecomposable}
if there is no index $i$ such that  $1 \leq i \leq n-1$
and $\pi(1), \ldots, \pi(i) \leq i$.
Otherwise the permutation is decomposable.
\begin{proposition}
Let $S \subseteq \mathfrak{S}_{m+1}$ such that
each permutation $\tau \in S$ is indecomposable.
Then the graph $G_{S}$ is strongly connected.
\label{proposition_decomposable}
\end{proposition}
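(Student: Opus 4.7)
The plan is to show directly that every vertex of $G_S$ can reach the identity $e = 12\cdots m$, and dually that $e$ reaches every vertex, so strong connectivity follows by concatenation. The mechanism is to exhibit, for each hop of the path, a specific $\tau \in \mathfrak{S}_{m+1}$ realising the edge that is manifestly decomposable, and therefore by hypothesis not in $S$. Recall that $\tau$ is decomposable whenever there exists $i < m+1$ with $\{\tau_1, \ldots, \tau_i\} = \{1, \ldots, i\}$.

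\textbf{Hop towards the identity.} Given $\pi \in \mathfrak{S}_m$, set $\tau = (\pi_1, \ldots, \pi_m, m+1)$. Then $\{\tau_1, \ldots, \tau_m\} = \{1, \ldots, m\}$, so $\tau$ is decomposable at $i = m$ and hence not in $S$. The corresponding edge is $\pi \to h(\pi)$, where $h(\pi) := \Pi(\pi_2, \ldots, \pi_m, m+1)$. Because $m+1$ is the largest value, $h(\pi)_m = m$. A short induction on $k$ then shows that the last $k$ entries of $h^k(\pi)$ are exactly $m-k+1, m-k+2, \ldots, m$; consequently $h^{m-1}(\pi) = e$, and we have a directed path of length $m-1$ from $\pi$ to $e$ inside $G_S$.

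\textbf{Hop away from the identity.} Given $\sigma \in \mathfrak{S}_m$, set $\tau = (1, \sigma_1+1, \sigma_2+1, \ldots, \sigma_m+1)$. Now $\tau_1 = 1$, so $\tau$ is decomposable at $i = 1$ and lies outside $S$. The corresponding edge of $G_S$ goes from $f(\sigma) := \Pi(1, \sigma_1+1, \ldots, \sigma_{m-1}+1)$ to $\sigma$, and $f(\sigma)_1 = 1$ because $1$ is the smallest value. A parallel induction on $k$ shows that the first $k$ entries of $f^k(\sigma)$ are $1, 2, \ldots, k$, so $f^{m-1}(\sigma) = e$. Reading the chain $e = f^{m-1}(\sigma) \to f^{m-2}(\sigma) \to \cdots \to f(\sigma) \to \sigma$ in the forward direction gives a directed path of length $m-1$ in $G_S$ from $e$ to $\sigma$.

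Concatenating the two paths produces a directed path from $\pi$ to $\sigma$ of length at most $2(m-1)$, so $G_S$ is strongly connected. The only genuine work is the two bookkeeping inductions that track how the prefix $1, 2, \ldots, k$ stabilises under $f$ and the suffix $m-k+1, \ldots, m$ stabilises under $h$; I do not foresee any real obstacle beyond that, since the entire argument hinges on the single observation that appending $m+1$ or prepending $1$ always creates a decomposable $\tau$.
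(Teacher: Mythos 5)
Your proof is correct. Let me compare it with the paper's.

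The paper constructs a single direct path from $\pi$ to $\sigma$ of length exactly $m$: take the sequence of $m+1$ integers $\pi_{1},\ldots,\pi_{m},\sigma_{1}+m,\ldots,\sigma_{m}+m$ and read off the $m$ overlapping windows of width $m+1$; each window is decomposable because its first few entries (coming from $\{1,\ldots,m\}$) are strictly smaller than its last few (coming from $\{m+1,\ldots,2m\}$). Your version instead routes every vertex through the identity $e$: a chain of length $m-1$ pushing $m+1$ onto the right (each step decomposable because $\{\tau_{1},\ldots,\tau_{m}\}=\{1,\ldots,m\}$) to reach $e$, then a chain of length $m-1$ pulling $1$ off the left (each step decomposable because $\tau_{1}=1$) to reach $\sigma$. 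The underlying idea is the same in both proofs --- manufacture each required edge from a $\tau$ whose decomposability is visible by construction --- but the paper's interpolation is slightly slicker and gives paths of length $m$ rather than your $2(m-1)$. Your variant has the minor expository advantage that the two single-hop moves (append $m+1$, prepend $1$) are each trivial to verify, whereas the paper's argument asks the reader to see decomposability across the whole family of mixed windows at once. Both are fine; the bookkeeping inductions you flag (the suffix $m-k+1,\ldots,m$ under $h$ and the prefix $1,\ldots,k$ under $f$) do go through as you expect.
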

\begin{proof}
Given two vertices $\pi = (\pi_{1}, \ldots, \pi_{m})$
and $\sigma = (\sigma_{1}, \ldots, \sigma_{m})$ of $G_{S}$.
Since the integers
$\pi_{1}, \ldots, \pi_{m}$, $\sigma_{1}+m, \ldots, \sigma_{m}+m$
are distinct,
the following path (described by its edges) is well defined:
$$ \Pi( \pi_{1}, \ldots, \pi_{m}, \sigma_{1}+m ),
   \Pi( \pi_{2}, \ldots, \pi_{m}, \sigma_{1}+m, \sigma_{2}+m ),
                   \ldots,
   \Pi( \pi_{m}, \sigma_{1}+m, \ldots, \sigma_{m}+m ) . $$
This path goes from the vertex $\pi$ to vertex $\sigma$.
Note that every edge $\tau^{\prime}$ on the path
is decomposable. Hence this path avoids
the forbidden indecomposable edges of $S$.
\end{proof}

\begin{proof}
[Proof of Theorem~\protect\ref{theorem_indecomposable_permutations}]
This follows directly from
Theorems~\ref{theorem_positivity_improving}
and~\ref{theorem_graph_H_ergodic}
and
Propositions~\ref{proposition_G_to_H}
and~\ref{proposition_decomposable}.
\end{proof}

\begin{proof}
[Proof of Corollary~\protect\ref{corollary_single_permutation}]
Let $\tau$ be the single permutation in the set $S$.
If $\tau$ is one of the two monotone permutations the result
will follow from descent pattern avoidance,
see~Theorem~\ref{theorem_D_H}.
If $\tau$ is indecomposable
the result follows from
Theorem~\ref{theorem_indecomposable_permutations}.
Finally if $\tau$ is decomposable
apply
Theorem~\ref{theorem_indecomposable_permutations}
to the upside down permutation
$(m+2-\tau(1), \ldots, m+2-\tau(m+1))$
which is not decomposable.
\end{proof}

Note that there are examples of patterns $S$ so that $H_{S}$ does not have
cycles even though the graph~$G_{S}$ has cycles.

\begin{example}
{\rm
$S = \left\{312,321\right\}$. In this case the graph $G_{S}$ has a
cycle. However, the graph $H_{S}$ does not have a cycle
and hence is not strongly connected. We observe this by
noting that for a directed edge $(x,y) \longrightarrow (y,z)$ in $H_{S}$ we
have that $x < \max(y,z)$. Hence none of $x_{i}$'s in a $k$-cycle
$(x_{1},x_{2}) \longrightarrow (x_{2},x_{3})  \longrightarrow \cdots
\longrightarrow (x_{k},x_{1})  \longrightarrow (x_{1},x_{2})$ can be the
largest.

Via the classical bijection $\pi \longmapsto \hat{\pi}$
(see~\cite[Section~1.3]{Stanley})
one obtains that the number $\{312,321\}$-avoiding
permutations are in bijection with involutions,
that is, permutations
$\pi$ such that $\pi^{2} = \operatorname{id}$. This was first observed by
Claesson~\cite{Claesson}. It follows that the generating function is
$\exp(z + z^{2}/2)$ and the asymptotic is $1/\sqrt{2} \cdot \exp(-1/4) \cdot
(n/e)^{n/2} \cdot \exp(\sqrt{n})$.
}
\label{example_312_321}
\end{example}

On the other hand, if $G_{S}$ does not have a cycle,
then neither does $H_{S}$.

\begin{lemma}
Let $S\subseteq\mathfrak{S}_{m+1}$ and suppose that $G_{S}$ has a directed
cycle that contains the two vertices $12\cdots m$ and $m \cdots 21$.
Moreover, assume that the two monotone permutations $12\cdots (m+1)$
and $(m+1) \cdots 21$ do not belong to the set $S$.
Then the graph $H_{S}$ contains a directed cycle.
\end{lemma}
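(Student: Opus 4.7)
The plan is to construct explicitly a closed sequence of reals in $(0,1)$ whose every $(m+1)$-window has pattern outside $S$ and whose last $m$ entries coincide with its first $m$ entries; such a sequence is exactly a directed cycle in $H_{S}$. Write the given $G_{S}$-cycle as $\pi_{0} \to \pi_{1} \to \cdots \to \pi_{L} = \pi_{0}$ with $\pi_{0} = 12 \cdots m$ and $\pi_{b} = m \cdots 21$ for some index $b$. Fix a small $\epsilon > 0$ and an increasing tuple $z = (z_{1}, \ldots, z_{m})$ with all $z_{i} \in (1-\epsilon, 1)$.

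I would assemble the sequence in four concatenated segments. First, starting from $z$, lift the $G_{S}$-path $\pi_{0} \to \cdots \to \pi_{b}$ via Lemma~\ref{lemma_lifting}, so that the last $m$ entries form a decreasing tuple. Second, traverse the monotone self-loop at $m \cdots 21$ (available because $(m+1) \cdots 21 \notin S$) $m$ times, each time appending a value in $(0, \epsilon)$ smaller than the previous appending; after these $m$ appendings the running $m$-tuple has all coordinates strictly less than $\epsilon$. Third, lift the $G_{S}$-path $\pi_{b} \to \cdots \to \pi_{L}$ starting from this small tuple, but at each step where the edge label forces the new value to be the new maximum, I select it in the nonempty interval $(M, 1 - \epsilon)$, where $M$ is the current running maximum. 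Fourth, traverse the monotone self-loop at $12 \cdots m$ (available because $12 \cdots (m+1) \notin S$) $m$ times, appending the prescribed values $z_{1}, z_{2}, \ldots, z_{m}$ in order; each $z_{i}$ exceeds the current maximum, so the window at each step is strictly increasing. The final $m$-tuple equals $z$, closing the loop.

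The main obstacle, and the place where both monotone-loop hypotheses are essential, is the third segment: I must verify that the lift can be completed while keeping the running maximum of the current $m$-tuple strictly below $1 - \epsilon$. This follows by induction on the lift length: the base case holds after segment two, and in each subsequent step either the appended value is not the new maximum, in which case the new $m$-tuple's maximum is at most the old maximum, or it is the new maximum, in which case my choice keeps it below $1 - \epsilon$. Once this invariant is secured, segment four can hit the exact target $z_{1}, \ldots, z_{m}$, because the headroom $(1 - \epsilon, 1)$ is available for all $m$ monotonic appendings. Checking that every internal and boundary $(m+1)$-window avoids $S$ is then routine: the windows in segments one and three are lifts of $G_{S}$-edges by construction, while the windows in segments two and four, together with the transitional windows between segments, are strictly monotone and so carry the patterns $(m+1) \cdots 21$ or $12 \cdots (m+1)$, both excluded from $S$ by hypothesis.
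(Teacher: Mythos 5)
Your proof is correct and follows the same four-piece skeleton as the paper — two lifted $G_{S}$-paths joined by two monotone transition paths of length $m$ that exploit $12\cdots (m+1), (m+1)\cdots 21 \notin S$ and close the cycle exactly at the starting $m$-tuple — but the device you use to make the pieces fit is genuinely different. The paper lifts both $G_{S}$-paths without any control over where the coordinates land, and only afterwards rescales one lift into $(1/2,1)$ by $\alpha(t)=(t+1)/2$ and the other into $(0,1/2)$ by $\beta(t)=t/2$, invoking the lemma that a strictly increasing map of $(0,1)$ sends $H_{S}$-paths to $H_{S}$-paths; since the images are then separated by $1/2$, the two monotone glueing segments come for free. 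You instead dispense with the monotone-map lemma and control the second lift directly, maintaining the inductive invariant that the running maximum of the current $m$-tuple stays below $1-\epsilon$ (new maximum $\le$ old maximum whenever the appended coordinate is not the window maximum, and otherwise it is freely chosen in $(M,1-\epsilon)$), so that the final increasing tuple has enough headroom to reach the prescribed target $z \in (1-\epsilon,1)^{m}$. Both arguments are sound; the paper's is a little shorter given its rescaling lemma, while yours is more self-contained. One small wording issue: when you begin segment two, the first value appended must be smaller than the minimum coordinate of the decreasing endpoint of segment one, not merely in $(0,\epsilon)$; this is trivially arranged but should be stated.
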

\begin{proof}
Pick a vertex $x = (x_{1}, \ldots, x_{m})$ such that $\Pi(x) = 12 \cdots m$.
The directed path from the vertex $12 \cdots m$ to the vertex $m \cdots 21$
can be lifted to a path in $H_{S}$ from the vertex $x$ to a vertex $y =
(y_{1}, \ldots, y_{m})$ where $\Pi(y) = m \cdots 21$. Similarly, pick a
vertex $z = (z_{1}, \ldots, z_{m})$ such that $\Pi(z) = m \cdots 21$. The
directed path from the vertex $m \cdots 21$ to the vertex $12 \cdots m$ can
be lifted to a path in $H_{S}$ from the vertex $z$ to a vertex $w = (w_{1},
\ldots, w_{m})$ where $\Pi(w) = 12 \cdots m$.

Using the two monotone functions $\alpha, \beta: (0,1) \longrightarrow (0,1)$
defined by $\alpha(x) = (x+1)/2$ and $\beta(x) = x/2$, we have two directed
paths: one from $\alpha(x)$ to $\alpha(y)$
and one from $\beta(z)$ to $\beta(w)$.

Using that $(m+1) \cdots 21$ is an edge in $G_{S}$ we have the following
path from $\alpha(y)$ to $\beta(z)$, namely
$$ \alpha(y) = (\alpha(y_{1}), \ldots, \alpha(y_{m}))
                    \longrightarrow
               (\alpha(y_{2}), \ldots, \alpha(y_{m}), \beta(z_{1}))
                    \longrightarrow
                 \cdots
                    \longrightarrow
               (\beta(z_{1}), \ldots, \beta(z_{m})) = \beta(z)  . $$
Similarly, we have the directed path
$$ \beta(w) =  (\beta(w_{1}), \ldots, \beta(w_{m}))
                    \longrightarrow
               (\beta(w_{2}), \ldots, \beta(w_{m}), \alpha(x_{1}))
                    \longrightarrow
                 \cdots
                    \longrightarrow
               (\alpha(x_{1}), \ldots, \alpha(x_{m})) = \alpha(x) . $$
Concatenate these four directed paths to obtain a directed cycle
in $H_{S}$.
\end{proof}

\section{Computational techniques}
\label{section_techniques}

In this section we discuss descent pattern avoidance
which is a special case of pattern avoidance.
First we introduce an analogue of the de Bruijn graph
$D_{U}$, which has the advantage that it is smaller
than the graph $G_{S}$. Moreover, if the graph~$D_{U}$
is ergodic so is the graph $H_{S(U)}$
and we obtain that the associated operator is
positivity improving.
Second, for descent pattern avoidance
we obtain that the eigenfunctions has a simplified form.
Finally, in the last subsection
we consider pattern avoidance 
that has symmetry. In these cases we show that we can
obtain the adjoint eigenfunctions from the eigenfunctions.

\subsection{Descent pattern avoidance}
\label{subsection.descent}

The descent set of a permutation $\pi$ in the symmetric group on $n$
elements is the subset of $\{1,\ldots,n-1\}$, given by
$\{i:\pi_{i}>\pi_{i+1}\}$.
An equivalent notion is the descent word, defined as follows.
The descent word of the permutation $\pi$ is the word
$u(\pi) = u_{1}\cdots u_{n-1}$
where $u_{i}=a$ if $\pi_{i}<\pi_{i+1}$ and $u_{i}=b$ otherwise.

Let $U$ be a collection of $ab$-words of length $m$. The permutation $\pi$
avoids the set $U$ if there is no consecutive subword of the descent word of
$\pi$ contained in the collection $U$.

Descent pattern avoidance is a special case of consecutive pattern
avoidance. For instance, permutations avoiding the word $a a b$ is the
permutations avoiding the set $S = \{1243, 1342, 2341\}$, since these three
permutations are the permutations with descent word $a a b$.
More formally, for $U$ a subset of $\{a,b\}^{m}$
define $S(U) \subseteq \mathfrak{S}_{m+1}$
by
$$   S(U)
   =
     \{\pi \in \mathfrak{S}_{m+1} \:\: : \:\:
                  u(\pi) \in U                \}   .  $$
Then the set of permutations avoiding the descent words in $U$
is the set of permutations avoiding $S(U)$.

For $U$ a subset of $\{a,b\}^{m}$
define the associated {\em de Bruijn graph} $D_{U}$
by letting the vertex set be $\{a,b\}^{m-1}$. For $x,y \in \{a,b\}$
and $u \in \{a,b\}^{m-2}$ such that $x u y \not\in U$ let there be a
directed edge from $x u$ to $u y$. When the set $U$ is empty, the graph
$D_{U}$ is the classical de Bruijn graph $D_{m-1}$.

\begin{lemma}
Let $U$ be a subset of $\{a,b\}^{m}$.
If there is a cycle $c$ of length $N \geq n+1$
that do not consists only of the loop $a^{m}$
or not only of the loop $b^{m}$, then
the cycle~$c$ can be lifted to a cycle of length $N$
in the graph $H_{S(U)}$.
\label{lemma_cycles_in_D}
\end{lemma}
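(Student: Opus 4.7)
The plan is to encode $c$ as a cyclic binary word and realize that word as the descent pattern of a cyclic sequence of real numbers in $(0,1)$. Traversing the cycle $c$ starting at any vertex, each successive edge appends one letter to a growing $ab$-word; since $c$ closes up after $N$ edges, this word becomes a cyclic binary word $w = w_1 w_2 \cdots w_N \in \{a,b\}^N$ whose consecutive cyclic $m$-letter subword $w_i w_{i+1} \cdots w_{i+m-1}$ is precisely the label of the $i$-th edge of $c$. By the definition of $D_U$, none of these substrings lies in $U$. The hypothesis that $c$ is neither the pure loop $a^m$ nor the pure loop $b^m$ translates exactly to: $w$ contains at least one $a$ and at least one $b$.

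Next I would construct distinct real numbers $x_1, \ldots, x_N \in (0,1)$ satisfying, with cyclic indices, $x_i < x_{i+1}$ when $w_i = a$ and $x_i > x_{i+1}$ when $w_i = b$. To this end, orient the undirected $N$-cycle on the vertex set $\{1, \ldots, N\}$ by placing an arc $i \to i+1$ whenever $w_i = a$ and an arc $i+1 \to i$ whenever $w_i = b$. A directed cycle in such an orientation of the undirected $N$-cycle must traverse every edge consistently around the loop, which forces either $w = a^N$ or $w = b^N$; since $w$ is not monochromatic, the directed graph is a DAG. A linear extension then yields the desired distinct values $x_1, \ldots, x_N$ in $(0,1)$.

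Finally I would check that the vertices $v_i' := (x_i, x_{i+1}, \ldots, x_{i+m-1})$ and the obvious edges $v_i' \to v_{i+1}'$ produce a cycle of length $N$ in $H_{S(U)}$ lifting $c$. Each $v_i'$ lies in $X$ because its $m$ coordinates are indexed by $m$ distinct residues modulo $N$ (using $N \geq m+1 > m$) and the $x_j$'s are distinct. For each edge $v_i' \to v_{i+1}'$, the condition $x_i \neq x_{i+m}$ holds because $N > m$ forces $i \not\equiv i + m \pmod{N}$, and the descent word of $(x_i, x_{i+1}, \ldots, x_{i+m})$ equals $w_i w_{i+1} \cdots w_{i+m-1}$, which is not in $U$ by construction. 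Thus the $i$-th edge of the closed walk is a genuine lift of the $i$-th edge of $c$.

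The main obstacle is the acyclicity step in the second paragraph: everything hinges on the observation that an orientation of the cycle graph on $N$ vertices is a DAG precisely when it is not monochromatic, and this is the one place where the hypothesis that $c$ is neither only the $a^m$ loop nor only the $b^m$ loop is consumed. Once the DAG property is established, a topological sort delivers the $x_i$, and the verification of the vertex and edge conditions is pure bookkeeping from the definitions of $D_U$, $S(U)$ and $H_{S(U)}$.
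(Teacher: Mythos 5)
Your proof is correct and follows the same overall plan as the paper: encode the cycle $c$ as a cyclic $ab$-word $w$ of length $N$, construct distinct reals $x_{1},\ldots,x_{N}\in(0,1)$ realizing the cyclic inequalities dictated by $w$, and read off the lifted cycle in $H_{S(U)}$. The one place you diverge is in producing the $x_{i}$'s: the paper cyclically re-indexes so that $v_{N-1}=a$ and $v_{N}=b$ (possible because $w$ is not monochromatic), builds $x_{1},\ldots,x_{N-1}$ greedily along the chain, and closes the cycle by choosing $x_{N}\in(\max(x_{N-1},x_{1}),1)$; you instead orient the undirected $N$-cycle according to $w$, observe that the only possible directed cycle is the full cycle (which would force $w=a^{N}$ or $w=b^{N}$), conclude the orientation is a DAG, and take a topological sort. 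Both arguments are short and correct. Yours is a bit more structural and makes transparent exactly where the non-monochromaticity hypothesis is consumed, while the paper's is more hands-on and explicit. You also spell out the routine verifications (coordinates of each lifted vertex distinct since $N\geq m$; $x_{i}\neq x_{i+m}$ since $N>m$; the descent word of each $(m+1)$-window equals the corresponding edge label of $c$, hence avoids $U$) that the paper leaves implicit. Finally, you read the hypothesis $N\geq n+1$ as $N\geq m+1$; this is evidently a typo in the paper, and $m+1$ is the bound actually used in the application (Theorem~\ref{theorem_D_H}).
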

\begin{proof}
Let $c$ be the cycle
$$  v_{1} v_{2} \cdots v_{m-1} \longrightarrow
    v_{2} v_{3} \cdots v_{m}   \longrightarrow
                        \cdots \longrightarrow       
    v_{N} v_{1} \cdots v_{m-2} \longrightarrow       
    v_{1} v_{2} \cdots v_{m-1} ,  $$
where each $v_{i}$ is either $a$ or $b$.
We would like to pick $N$ real numbers
$x_{1}, \ldots, x_{N}$ in the 
interval $(0,1)$ such that
\begin{equation}
     \begin{array}{c c c}
 x_{i} < x_{i+1} & \text{ if } & v_{i} = a, \\
 x_{i} > x_{i+1} & \text{ if } & v_{i} = b,
     \end{array}
\label{equation_picking_cycle}
\end{equation}
where all the indices are modulo $N$.
Since all the letters $v_{1}$ through $v_{N}$
are not the same, we may assume that
$v_{N-1} = a$ and $v_{N} = b$.
Pick $x_{1}$ arbitrarily. Pick $x_{2}$ through
$x_{N-1}$ such that
inequality~(\ref{equation_picking_cycle}) is satisfied.
Finally, pick $x_{N}$ in the interval
$(\max(x_{N-1},x_{1}),1)$.
Now in the graph $H_{S(U)}$
we have the cycle
$$  (x_{1}, x_{2}, \ldots, x_{m})
           \longrightarrow
    (x_{2}, x_{3}, \ldots, x_{m+1})
           \longrightarrow
        \cdots
           \longrightarrow
    (x_{N}, x_{1}, \ldots, x_{m-1}) 
           \longrightarrow
    (x_{1}, x_{2}, \ldots, x_{m})  . $$
\end{proof}

\begin{theorem}
Let $U$ be a subset of $\{a,b\}^{m}$. If the de Bruijn graph $D_{U}$ is
strongly connected, then the graph $H_{S(U)}$ is also
strongly connected.
Furthermore,
the de Bruijn graph $D_{U}$
has the same period as the graph $H_{S(U)}$.
\label{theorem_D_H}
\end{theorem}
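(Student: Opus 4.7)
The plan is to exploit the descent-word projection $w:X\to\{a,b\}^{m-1}$ which, by the definition of $S(U)$, is a graph homomorphism from $H_{S(U)}$ onto $D_U$: every edge of $H_{S(U)}$ projects to an edge of $D_U$, and conversely each edge of $D_U$ can be locally lifted because the new coordinate is constrained only to a half-interval of $(0,1)$ determined by the prescribed descent letter (minus finitely many values that would collide with coordinates already in the moving $m$-window).

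For strong connectivity, given two vertices $x,y\in X$, let $v_x=w(x)$ and $v_y=w(y)$, and pick a directed walk in $D_U$ from $v_x$ to $v_y$ of some length $N$ substantially larger than $2m$ (possible because $D_U$ is strongly connected and the walk can be padded with cycles). A length-$N$ walk from $x$ to $y$ in $H_{S(U)}$ is the same thing as a real sequence $x_1,\ldots,x_{N+m}\in(0,1)$ with $(x_1,\ldots,x_m)=x$, $(x_{N+1},\ldots,x_{N+m})=y$, and whose descent word of length $N+m-1$ realizes the chosen $D_U$ walk. The descents at the first $m-1$ positions are fixed by $x$ and those at the last $m-1$ positions are fixed by $y$, while the middle $N-m+1$ descents are free, subject only to giving a walk in $D_U$. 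I would then construct the intermediate coordinates $x_{m+1},\ldots,x_N$, cascading backwards from $y$: the constraint on $x_N$ is that it lie on the side of $y_1$ dictated by the appended letter $L_N$, which can be arranged by steering the walk in $D_U$ so that the edge landing at $v_y$ produces that letter, and the earlier $x_{m+1},\ldots,x_{N-1}$ are then filled in one at a time from half-intervals avoiding finitely many exclusions, with enough room to terminate at the chosen $x_N$ because $N-m-1$ can be made arbitrarily large.

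For the period equality, let $d$ and $d'$ denote the periods of $D_U$ and $H_{S(U)}$. Any cycle in $H_{S(U)}$ projects under $w$ to a closed walk in $D_U$ of the same length, hence a multiple of $d$; this gives $d\mid d'$. Conversely, since $D_U$ is strongly connected on $2^{m-1}\geq 2$ vertices, it contains cycles of arbitrarily large length that are neither powers of the loop at $a^{m-1}$ nor powers of the loop at $b^{m-1}$ (they either visit another vertex or, when $m=2$, alternate between the two), and in particular two such mixed cycles of lengths $\ell_1,\ell_2$ with $\gcd(\ell_1,\ell_2)=d$. By Lemma~\ref{lemma_cycles_in_D} these lift to cycles of the same lengths in $H_{S(U)}$, so $d'\mid\gcd(\ell_1,\ell_2)=d$, whence $d=d'$.

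The main obstacle is the cascading bookkeeping in the strong-connectivity argument: one must arrange that the $D_U$ walk can be chosen with the right descent letter at the crucial position $L_N$, and simultaneously that the intermediate real values can be carried over a long stretch while respecting the half-interval and distinctness constraints and ending compatibly with the fixed endpoint $y$. The combined freedoms of routing the $D_U$ walk (using strong connectivity) and of choosing each intermediate coordinate in an open set are what make the construction go through, but stitching these together carefully is the technical heart of the proof.
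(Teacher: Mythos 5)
Your plan for the strong-connectivity part is genuinely different from the paper's, and it has a real gap rather than just unfinished bookkeeping. You propose cascading the intermediate coordinates backwards from $y$, but the cascade has to terminate at $x_{m+1}$ in a way compatible with the fixed value $x_m$: if the descent letters $u_m$ and $u_{m+1}$ are both $a$ (say), then $x_{m+1}$ must lie strictly between $x_m$ and $x_{m+2}$, and this fails whenever the value of $x_{m+2}$ chosen earlier in the cascade happens to be $\leq x_m$. Nothing in your sketch forces $x_{m+2}$ to land on the correct side, and the same kind of order conflict can already occur at every step (each new coordinate has to avoid the $m-1$ most recently chosen ones \emph{and} be on the prescribed sides of both neighbors). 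Making $N$ large does not by itself fix this, because the constraint comes from a single pair of already-fixed values, not from running out of room. The paper avoids this entirely by a ``meet in the middle'' construction: it routes $x$ forward to a fully ascending vertex $z$ (via a $D_U$-walk to $a^{m-1}$), routes $y$ backward to a vertex $w$ with descent word $a^{m-2}b$ (via a reversed $D_U$-walk), and then merges the two lifts by replacing $z_i$ and $w_{i-1}$ with $v_i=\max(z_i,w_{i-1})$; the monotone shape of $z$ and $w$ is what guarantees the replacement still gives a valid path, which is exactly the global compatibility your cascade cannot guarantee. To salvage your version you would need to justify why a suitable $D_U$-walk and a compatible choice of intermediate reals always exist, which is precisely the content of the paper's construction.

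Your period argument is close to the paper's in spirit (project via the homomorphism to get $d\mid d'$, then lift cycles to get $d'\mid d$), but the step ``in particular two such mixed cycles of lengths $\ell_1,\ell_2$ with $\gcd(\ell_1,\ell_2)=d$'' is asserted rather than shown. Having arbitrarily long mixed cycles does not immediately give a pair with $\gcd$ exactly $d$; one should argue, for instance, by taking a finite family of cycles whose lengths have $\gcd d$, fixing a mixed cycle $C_0$ through a common vertex, and observing that the lengths of $C_0$ and of the concatenations $C_0\cdot C_i$ are all lengths of mixed cycles with $\gcd$ equal to $d$. (The paper instead lifts cycles through a fixed vertex $w\notin\{a^{m-1},b^{m-1}\}$, using that in a strongly connected graph the $\gcd$ of cycle lengths through any one vertex is the period, and handles $m=2$ separately.) This is a fixable but real omission.
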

\begin{proof}
Note that the graph $D_{U}$ has
the directed edge $a^{m-1} b$
since otherwise it would not be strongly connected.
Given two vertices $x$ and $y$ in $H_{S(U)}$.
To prove that $H_{S(U)}$ is
strongly connected
it is enough to find a directed path from $x$ to $y$.

We can find a path from $u(\Pi(x))$ to $a^{m-1}$ in
the graph $D_{U}$ that consists of at least $m+1$ edges.
Similarly to the lifting lemma,
Lemma~\ref{lemma_lifting}, we can lift
this path to a path in $H_{S(U)}$
that starts at the vertex $x$ and ends, say,
in the vertex $z = (z_{1}, \ldots, z_{m})$.
Note that $z_{1} < \cdots < z_{m}$.
Moreover, we can find a path from
$a^{m-2} b$ to $u(\Pi(y))$ in $D_{U}$
that has length at least $m+1$.
Lift this path to a path that ends in the vertex $y$
and begins at $w = (w_{1}, \ldots, w_{m})$,
where $w_{1} < \cdots < w_{m-1} > w_{m}$.

Let $v_{i} = \max(z_{i}, w_{i-1})$ for $2 \leq i \leq m$.
Observe that we have the string of inequalities
$z_{1} < v_{2} < \cdots < v_{m} > w_{m}$.
We can now concatenate these two paths as follows.
Replace each occurrence of $z_{i}$ and $w_{i-1}$ by $v_{i}$
for $2 \leq i \leq m$ in each of the two paths.
Then we may connect the vertex
$(z_{1}, v_{2}, \ldots, v_{m})$ with
the vertex
$(v_{2}, \ldots, v_{m},w_{m})$
via the edge that goes across the edge
with descent word $a^{m-1} b$.
Thus the graph $H_{S(U)}$ is strongly connected.

Since there is a graph homomorphism from $H_{S(U)}$
to $D_{U}$ we know that the period of $D_{U}$
divides the period of $H_{S(U)}$.
To see that the periods are equal, pick a vertex~$w$
of $D_{U}$ that differs from $a^{m-1}$ and $b^{m-1}$.
Then any cycle of length greater than $m+1$ through
the vertex $w$
in $D_{U}$
lifts to a cycle of the same length
in $H_{S(U)}$.
Hence the greatest common divisor of 
lengths of cycles through $w$
is a multiple of the greatest common divisor of
the cycle lengths of $H_{S(U)}$.
Hence the two periods are equal.

Note that this argument only works when $m \geq 3$
since there is no such vertex $w$ in the $m=2$ case.
But the remaining $m=2$ case is straightforward to check.
\end{proof}

\subsection{Invariant subspace for descent pattern avoidance}
\label{subsection_invariant_subspace}

For an $ab$-word $u$ of length $m-1$ define the descent polytope $P_{u}$ to
be the subset of the unit cube~$[0,1]^{m}$ corresponding to all vectors with
descent word $u$. That is,
$$
P_{u}
  =
 \{(x_{1}, \ldots, x_{m}) \in[0,1]^{m}
    \:\: : \:\:
       x_{i} \leq x_{i+1} \text{ if } u_{i} = a
          \text{ and }
       x_{i} \geq x_{i+1} \text{ if } u_{i} = b
             \} .
$$
Observe that the $m$-dimensional unit cube is the union of the $2^{m-1}$
descent polytopes $P_{u}$. Now the operator $T$ corresponding to the descent
pattern avoidance of the set $U$ has the following form. For an $ab$-word $u$
of length $m-2$ and $y \in\{a,b\}$ we have
\begin{eqnarray}
\left. T(f) \right| _{P_{u y}}
    & = &
\int_{0}^{x_{1}} \chi(a u y) \cdot
   \left. f(t,x_{1}, \ldots, x_{m-1}) \right|_{P_{a u}}  dt 
\label{equation_descent_y} \\
    & + & 
\int_{x_{1}}^{1} \chi(b u y) \cdot
   \left. f(t,x_{1}, \ldots, x_{m-1}) \right|_{P_{b u}}  dt ,
\nonumber
\end{eqnarray}
where by abuse of notation we let $\chi(w) = 1$ if $w$ does not belong to
the set $U$ and $\chi(w) = 0$ otherwise.

\begin{proposition}
Let $T$ be the operator associated with a descent pattern avoidance and $k$
is an integer such that $0 \leq k \leq m-1$. Let $u$ be an $ab$-word of
length $m-1$. Then the function $T^{k}(f)$ restricted to the descent
polytope $P_{u}$ only depends on the variables $x_{1}$ through $x_{m-k}$.
\end{proposition}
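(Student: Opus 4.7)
The plan is to prove the statement by induction on $k$. The base case $k=0$ is immediate since $T^{0}(f) = f$ is a function on $[0,1]^{m}$, and depending on all of $x_{1}, \ldots, x_{m}$ is the claim when $k=0$.

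For the inductive step, assume the result for some $k-1$ with $1 \leq k \leq m-1$, and write $g = T^{k-1}(f)$. To analyze $T^{k}(f) = T(g)$ on a descent polytope $P_{u}$ with $u$ of length $m-1$, I would write $u = u' y$ where $u'$ has length $m-2$ and $y \in \{a,b\}$, and apply formula~(\ref{equation_descent_y}) to $g$:
$$
T(g)|_{P_{u' y}}(x_{1}, \ldots, x_{m})
   =
\int_{0}^{x_{1}} \chi(a u' y) \cdot g(t, x_{1}, \ldots, x_{m-1})|_{P_{a u'}} \, dt
   +
\int_{x_{1}}^{1} \chi(b u' y) \cdot g(t, x_{1}, \ldots, x_{m-1})|_{P_{b u'}} \, dt.
$$

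By the inductive hypothesis applied to either of the length-$(m-1)$ descent polytopes $P_{a u'}$ and $P_{b u'}$, the restriction of $g$ depends only on its first $m - (k-1) = m - k + 1$ arguments. Substituting $(t, x_{1}, \ldots, x_{m-1})$, the integrand therefore depends only on $t, x_{1}, \ldots, x_{m-k}$, and in particular is independent of $x_{m-k+1}, \ldots, x_{m-1}$. Integrating over $t$ with limits either $[0, x_{1}]$ or $[x_{1}, 1]$ can only introduce additional dependence on $x_{1}$, which is already among the surviving variables. Thus $T^{k}(f)|_{P_{u}}$ depends only on $x_{1}, \ldots, x_{m-k}$, completing the induction.

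I do not anticipate a substantive obstacle: the argument is purely a bookkeeping induction, and the key observation is simply that the right-hand side of~(\ref{equation_descent_y}) does not involve $x_{m}$ at all as an argument of the integrand; the letter $y$ is already encoded in the choice of polytope $P_{u' y}$. This is what produces the loss of one variable per application of $T$, and iterating yields the bound $m - k$.
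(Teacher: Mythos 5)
Your proof is correct and follows the same inductive strategy as the paper's proof: apply equation~(\ref{equation_descent_y}), observe that the shift of arguments in the integrand drops dependence on the last variable of the previous step, and iterate. You have simply spelled out the ``shift of variables'' observation more explicitly than the paper does, including the remark that the integration limits introduce dependence only on $x_{1}$.
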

\begin{proof}
Proof by induction on $k$. When $k=0$
there is nothing to prove.
When $1 \leq k \leq m-1$,
we know by induction that the restriction of $T^{k-1}(f)$
only depends on $x_{1}, \ldots, x_{m-k+1}$.
By the shift of variables in the right
hand side of equation~(\ref{equation_descent_y}),
we obtain that $T^{k}(f)$ does
not depend on the variable $x_{m-k+1}$,
completing the induction.
\end{proof}

\begin{corollary}
Let $T$ be the operator associated with a descent pattern avoidance and let
$\varphi$ be an eigenfunction associated with a non-zero eigenvalue $\lambda$.
Then the eigenfunction restricted
to each descent polytope $P_{u}$ only depends on the variable $x_{1}$.
\label{corollary_descent_word_avoidance_eigenfunctions}
\end{corollary}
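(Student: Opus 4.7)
The plan is to derive the corollary as an immediate consequence of the preceding proposition combined with the eigenvalue equation. The crucial point is that because $\lambda$ is assumed non-zero, we can invert the power of $\lambda$ appearing when we apply $T$ repeatedly to $\varphi$, and thereby transfer the dependence-reduction property from $T^k(f)$ to $\varphi$ itself.

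Concretely, I would proceed as follows. First, since $T\varphi = \lambda \cdot \varphi$, iterating yields $T^{m-1}\varphi = \lambda^{m-1} \cdot \varphi$. Second, apply the preceding proposition with $f = \varphi$ and $k = m-1$: the restriction of $T^{m-1}\varphi$ to each descent polytope $P_u$ depends only on the variables $x_1, \ldots, x_{m-(m-1)} = x_1$, i.e.\ on $x_1$ alone. Third, using the assumption $\lambda \neq 0$, solve for $\varphi$ to get
\[
   \left. \varphi \right|_{P_u}
      =
   \lambda^{-(m-1)} \cdot \left. T^{m-1}\varphi \right|_{P_u},
\]
and conclude that $\varphi|_{P_u}$ is a function of $x_1$ only.

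There is essentially no obstacle here; the work is entirely encapsulated in the preceding proposition, which reduces the dependence one variable at a time via the shift of variables in equation~(\ref{equation_descent_y}). The only technical point worth noting is that the argument breaks down if $\lambda = 0$, which is precisely why the corollary is stated under the hypothesis of a non-zero eigenvalue; this explains why no extra hypothesis on the kernel of $T$ is needed.
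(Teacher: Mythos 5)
Your argument is exactly the paper's: apply the preceding proposition with $k=m-1$ to get that $T^{m-1}\varphi$ restricted to each $P_u$ depends only on $x_1$, then use $T^{m-1}\varphi = \lambda^{m-1}\varphi$ and divide by $\lambda^{m-1}\neq 0$. You have simply spelled out the one-line proof in the paper in more detail; the approach is identical.
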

\begin{proof}
Since $\lambda^{m-1} \cdot \varphi= T^{m-1}(\varphi)$ the eigenfunction has the
required form.
\end{proof}
\begin{corollary}
Let $T$ be the operator associated with a descent pattern avoidance and let
$\varphi$ be an eigenfunction associated with a non-zero eigenvalue $\lambda$.
Assume that $f$ is a generalized eigenfunction, that is,
it satisfies the equation
$\lambda \cdot f = T(f) + \varphi$.
Then the function $f$ restricted
to each descent polytope~$P_{u}$ only depends on the variable $x_{1}$.
\label{corollary_descent_word_avoidance_eigenfunctions_II}
\end{corollary}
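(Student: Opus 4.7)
The plan is to imitate the proof of the immediately preceding Corollary~\ref{corollary_descent_word_avoidance_eigenfunctions}. There the key mechanism was to use the proposition which guarantees that $T^{m-1}$ applied to \emph{any} function has, on each descent polytope $P_{u}$, a restriction depending only on $x_{1}$; combined with the identity $\lambda^{m-1}\varphi = T^{m-1}(\varphi)$, this forced $\varphi$ to have the same property. For the generalized eigenfunction $f$ satisfying $\lambda f = T(f)+\varphi$ I need the analogous identity: an expression of $\lambda^{m-1} f$ as a sum whose summands are already known to depend only on $x_{1}$ on each $P_{u}$.

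To produce that identity, I would first iterate the defining relation. Rewriting it as $T(f) = \lambda f - \varphi$ and using the fact that $T(\varphi) = \lambda\varphi$, a short induction on $k$ gives
$$T^{k}(f) = \lambda^{k} f - k\,\lambda^{k-1}\,\varphi.$$
Taking $k=m-1$ and rearranging yields
$$\lambda^{m-1} f = T^{m-1}(f) + (m-1)\,\lambda^{m-2}\,\varphi.$$

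Now I would restrict both sides to a descent polytope $P_{u}$. The first term on the right depends only on $x_{1}$ by the preceding proposition, applied with $k=m-1$ to the function $f$. The second term depends only on $x_{1}$ by Corollary~\ref{corollary_descent_word_avoidance_eigenfunctions}. Dividing by $\lambda^{m-1}\neq 0$ gives the claimed property for $f$.

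There is no real obstacle. The only point requiring care is the inductive identity for $T^{k}(f)$, which relies on $\varphi$ being a genuine (not generalized) eigenfunction and is a routine telescoping; everything else is a direct invocation of the two preceding results.
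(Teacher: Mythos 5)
Your proof is correct. The identity $T^{k}(f) = \lambda^{k} f - k\,\lambda^{k-1}\varphi$ checks out by the telescoping induction you describe, and applying the preceding proposition (with $k=m-1$, for an arbitrary $f$) together with Corollary~\ref{corollary_descent_word_avoidance_eigenfunctions} for $\varphi$ finishes the argument cleanly.

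The paper's proof takes a slightly different tack: it does a \emph{descending} induction on the number of variables $f$ depends on, starting from the trivial $k=m$ case. At each step it observes that if $f$ depends only on $x_{1},\dots,x_{k}$ on each $P_{u}$, then $T(f)$ depends only on $x_{1},\dots,x_{k-1}$ (by the variable-shift built into $T$), $\varphi$ depends only on $x_{1}$, and hence $\lambda f = T(f)+\varphi$ depends only on $x_{1},\dots,x_{k-1}$. This avoids invoking the earlier proposition entirely and never needs the explicit power formula. Your version packages the same reduction mechanism into the one-line identity for $T^{k}(f)$ and then delegates the variable-reduction to the proposition as a black box. The two arguments are essentially equivalent in content; the paper's is a touch more self-contained, while yours makes the algebraic structure (a rank-one perturbation of the eigenvalue relation producing a Jordan-block-type power formula) more explicit.
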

\begin{proof}
By induction on $k$. Assume that $1 < k \leq m$
and that $f$ restricted to each descent polytope
only depends on the variables $x_{1}$ through $x_{k}$.
Then $T(f) + \varphi$ only depends on $x_{1}$ through $x_{k-1}$
showing that $\lambda \cdot f$ only depends on $x_{1}, \ldots, x_{k-1}$.
\end{proof}

Let $V$ be the subspace of $L^{2}([0,1]^{m})$ consisting of all functions $f$
that only depend on the variable~$x_{1}$ when restricted to each of the
descent polytopes $P_{u}$. Observe that the subspace $V$ is invariant under
the operator $T$. That is, the operator $T$ restricts to the subspace $V$.
Moreover the constant function~$\mathbf{1}$ belongs to $V$. Hence to
understand the behavior of $T^{n}(\mathbf{1})$
it is enough to study this restricted operator.

In order to describe the subspace $V$ more explicitly define for
an $ab$-word $u$ of length $m-1$
the polynomial $h(u;x_{1})$ as follows:
$$
h(u;x_{1}) 
     =
\int_{(x_{1},x_{2}, \ldots, x_{m}) \in P_{u}}  1 dx_{2} \cdots dx_{m} .
$$
These polynomials were first introduced and studied
in~\cite{Ehrenborg_Levin_Readdy}, with different notation.

Let $p$ be a vector $\left( p_{u}(x_{1}) \right)_{u \in\{a,b\}^{m-1}}$.
That is, the vector $p$ consists of one-variable functions in the variable
$x_{1}$ and is indexed by $ab$-words of length $m-1$.
Consider the function $f$ on $[0,1]^{m}$ defined by
$$
\left. f(x_{1}, \ldots, x_{m}) \right| _{P_{u}} = p_{u}(x_{1})
$$
for all $ab$-words $u$ of length $m-1$. Observe that the function $f$
belongs to $L^{2}([0,1]^{m})$, and hence to the invariant subspace $V$, if
and only if
$$
\int_{0}^{1} h(u; x_{1}) \cdot \left| p_{u}(x_{1})\right|^{2}
 dx_{1} < \infty
$$
for all $ab$-words $u$ of length $m-1$. For two functions $f$ and $g$ in the
subspace~$V$, corresponding to the two vectors $\left( p_{u}(x_{1}) \right)
_{u \in\{a,b\}^{m-1}}$ and $\left( q_{u}(x_{1}) \right) _{u \in
\{a,b\}^{m-1}}$, the inner product is given by
$$
\pair{f}{g}
     =
\sum_{u \in\{a,b\}^{m-1}} \int_{0}^{1}
      h(u;x_{1}) \cdot
      p_{u}(x_{1}) \cdot
      \overline{q_{u}(x_{1})} dx_{1} .
$$
We end this section by a structural result about the subspace $V$.

\begin{proposition}
The invariant subspace $V$ is isometrically isomorphic
to the Hilbert space
$$ L^{2}\left( \left[0,1\right] \right)^{2^{m-1}} . $$
\end{proposition}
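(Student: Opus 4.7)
The plan is to build an explicit Hilbert space isomorphism component by component. First, I would observe that the description of $V$ just preceding the statement already gives a linear bijection between $V$ and the direct sum $\bigoplus_{u \in \{a,b\}^{m-1}} L^{2}([0,1],\, h(u;x_{1})\, dx_{1})$ of weighted $L^{2}$ spaces, namely the map sending $f \in V$ to the tuple $(p_{u})_{u}$ with $\left. f \right|_{P_{u}}(x_{1},\ldots,x_{m}) = p_{u}(x_{1})$. The inner product formula
\begin{equation*}
\pair{f}{g} = \sum_{u \in \{a,b\}^{m-1}} \int_{0}^{1} h(u;x_{1}) \cdot p_{u}(x_{1}) \cdot \overline{q_{u}(x_{1})} \, dx_{1}
\end{equation*}
derived in the preceding paragraphs shows that this bijection is an isometry when the direct sum carries its natural weighted inner product.

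Next, for each $u$ I would construct the standard unitary $\Phi_{u} : L^{2}([0,1],\, h(u;x_{1})\, dx_{1}) \longrightarrow L^{2}([0,1])$ defined by $\Phi_{u}(p_{u})(x_{1}) = \sqrt{h(u;x_{1})} \cdot p_{u}(x_{1})$. A direct computation of the $L^{2}$-norms shows that $\Phi_{u}$ is an isometry, and surjectivity follows by exhibiting the inverse $q \mapsto q / \sqrt{h(u;x_{1})}$, with the value defined arbitrarily on the zero set of $h(u;x_{1})$. Taking the Hilbert space direct sum $U = \bigoplus_{u} \Phi_{u}$ then yields the claimed unitary identification of $V$ with $L^{2}([0,1])^{2^{m-1}}$.

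The one point requiring care is that the weight $h(u;x_{1})$ must vanish only on a Lebesgue-null subset of $[0,1]$, so that both $\sqrt{h}$ and $1/\sqrt{h}$ produce measurable, almost-everywhere defined functions. Since $h(u;x_{1})$ is by definition the $(m-1)$-dimensional volume of the slice of the descent polytope $P_{u}$ at height $x_{1}$, it is a polynomial in $x_{1}$; and because $P_{u}$ itself has positive $m$-dimensional volume, this polynomial is not identically zero, so its zero set in $[0,1]$ is finite and in particular of measure zero. This geometric input is the only non-formal ingredient in the argument, and I expect it to be a mild observation rather than a genuine obstacle.
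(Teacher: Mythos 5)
Your proposal is correct and takes essentially the same approach as the paper, which gives precisely the map $(p_u)_u \mapsto (\sqrt{h(u;x_1)}\cdot p_u)_u$ as the isomorphism. You add a small amount of useful detail (verifying that each weight $h(u;\cdot)$ is a nonzero polynomial, so its zero set is null, which justifies the inverse) that the paper leaves implicit.
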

\begin{proof}
The isomorphism of the Hilbert spaces
$V \longrightarrow {L^{2}([0,1])}^{2^{m-1}}$ is given by
$$
\left( p_{u}(x_{1})\right) _{u\in\{a,b\}^{m-1}}
  \longmapsto
\left( \sqrt{h(u;x_{1})}\cdot p_{u}(x_{1})\right)_{u\in\{a,b\}^{m-1}}.
$$
\end{proof}

\subsection{Symmetries}

Let $J$ and $R$ be the following two involutions on the space
$L^{2}([0,1]^{m})$:
\begin{eqnarray*}
(J f)(x_{1},x_{2},\ldots,x_{m})
  & = &
f(1-x_{m},\ldots,1-x_{2},1-x_{1}), \\
(R f)(x_{1},x_{2},\ldots,x_{m})
  & = &
f(x_{m},\ldots,x_{2},x_{1}).
\end{eqnarray*}
Observe that both $J$ and $R$ are self adjoint operators.

\begin{lemma}
Assume that $\chi$ has the symmetry
$$
\chi(x_{1},x_{2},\ldots,x_{m},x_{m+1})
   =
\chi(1-x_{m+1},1-x_{m},\ldots,1-x_{2},1-x_{1}).
$$
Then the adjoint of the associated operator $T$ is given by
$T^{*} = J T J$.
Moreover, if $\varphi$ is an eigenfunction of the operator $T$
with eigenvalue $\lambda$ then $\psi = J\varphi$ is an eigenfunction
of the adjoint $T^{*}$ with the eigenvalue~$\lambda$.
Furthermore, we have the equality
$\pair{\mathbf{1}}{\overline{\psi}} = \pair{\varphi}{\mathbf{1}}$.
\label{lemma_J}
\end{lemma}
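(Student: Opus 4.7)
The plan is to verify the three assertions in sequence by direct computation, exploiting the symmetry hypothesis on $\chi$. First, to establish $T^{*} = JTJ$, I would start from the formula for $T^{*}$ given by Lemma~\ref{lemma_adjoint} and compute $JTJ$ separately by applying the three definitions in succession. Since $J$ simply reverses the arguments and replaces each by its complement, unpacking $(JTJf)(x_{1},\ldots,x_{m})$ yields an integral over $t \in [0,1]$ of $\chi(t, 1-x_{m}, \ldots, 1-x_{1})$ times $f(x_{2},\ldots,x_{m},1-t)$. The change of variable $u = 1-t$ brings this integral into a form where the hypothesized symmetry of $\chi$ directly applies, and a single invocation of that symmetry turns it into the adjoint formula from Lemma~\ref{lemma_adjoint}.

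The second assertion then follows almost immediately, using that $J$ is linear with $J^{2} = I$. If $T\varphi = \lambda\varphi$, then
$$T^{*}(J\varphi) = (JTJ)(J\varphi) = JT\varphi = \lambda J\varphi,$$
so $\psi = J\varphi$ is an eigenfunction of $T^{*}$ with eigenvalue $\lambda$. This is compatible with the fact, noted earlier, that the spectrum of $T^{*}$ is the complex conjugate of the spectrum of $T$ and that complex eigenvalues of $T$ come in conjugate pairs.

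For the final identity $\pair{\mathbf{1}}{\overline{\psi}} = \pair{\varphi}{\mathbf{1}}$, I would use that $J$ commutes with complex conjugation, since it only permutes and reflects coordinates. Hence $\pair{\mathbf{1}}{\overline{\psi}} = \int_{[0,1]^{m}} \psi(x) \, dx = \int_{[0,1]^{m}} (J\varphi)(x) \, dx$, and the measure-preserving involution $y_{i} = 1 - x_{m+1-i}$ converts the right-hand side into $\int_{[0,1]^{m}} \varphi(y) \, dy = \pair{\varphi}{\mathbf{1}}$. The main obstacle in the whole argument is purely notational: keeping the indices and reflections straight during the unpacking of $JTJ$. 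Once that bookkeeping is done carefully, each step is routine.
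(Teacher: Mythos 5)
Your proposal is correct and follows essentially the same route as the paper: unpack $JTJ$ to get $\int_0^1 \chi(t,1-x_m,\ldots,1-x_1)\,f(x_2,\ldots,x_m,1-t)\,dt$, substitute $u=1-t$, apply the symmetry of $\chi$ to recover the adjoint formula of Lemma~\ref{lemma_adjoint}, and then deduce the eigenfunction statement from $J^2=I$ and the inner-product identity from the fact that $J$ is a measure-preserving, conjugation-commuting involution fixing $\mathbf{1}$. The only cosmetic difference is that the paper phrases the last identity via the self-adjointness of $J$ and $J\mathbf{1}=\mathbf{1}$, whereas you perform the change of variables directly in the integral; these are the same computation.
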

\begin{proof}
We have that
\begin{eqnarray*}
J T J f(x_{1}, x_{2}, \ldots, x_{m})
  & = &
J T f(1 - x_{m}, \ldots, 1 - x_{2},
1 - x_{1}) \\
  & = &
J \int_{0}^{1} 
    \chi(t,x_{1}, \ldots, x_{m})
      \cdot
    f(1 - x_{m-1}, \ldots, 1 - x_{1}, 1 - t) dt \\
  & = &
\int_{0}^{1}
    \chi(t, 1 - x_{m}, \ldots, 1 - x_{1})
      \cdot
    f(x_{2}, \ldots, x_{m}, 1 - t) dt \\
  & = &
\int_{0}^{1}
    \chi(1 - t, 1 - x_{m}, \ldots, 1 - x_{1})
      \cdot
    f(x_{2}, \ldots, x_{m}, t)  dt \\
  & = &
\int_{0}^{1}
    \chi(x_{1}, \ldots, x_{m},t)
      \cdot
    f(x_{2}, \ldots, x_{m}, t) dt \\
  & = &
T^{*} f(x_{1}, \ldots, x_{m-1}, x_{m}) .
\end{eqnarray*}
For the second statement consider the following line of equalities
$T^{*} J \varphi= J T J J \varphi= J T \varphi= \lambda \cdot J \varphi$.
Lastly,
$\pair{\mathbf{1}}{\overline{\psi}}
   =
 \pair{\mathbf{1}}{\overline{J \varphi}}
   =
 \pair{\mathbf{1}}{J \overline{\varphi}}
   =
 \pair{J \mathbf{1}}{\overline{\varphi}}
   =
 \pair{\mathbf{1}}{\overline{\varphi}}
   =
 \pair{\varphi}{\mathbf{1}}$.
\end{proof}

Similarly to Lemma~\ref{lemma_J} we have the next lemma. Its proof is
similar to the previous proof and hence omitted.

\begin{lemma}
Assume that $\chi$ has the symmetry
$$
\chi(x_{1},x_{2},\ldots,x_{m},x_{m+1})
=
\chi(x_{m+1},x_{m},\ldots,x_{2},x_{1}).
$$
Then we have that the adjoint of the associated operator $T$ is given by
$T^{*} = R T R$.
Moreover, if $\varphi$ is an eigenfunction of the operator $T$
with eigenvalue $\lambda$ then $\psi = R\varphi$
is an eigenfunction of the adjoint~$T^{*}$
with the eigenvalue~$\lambda$.
Furthermore, we have the equality
$\pair{\mathbf{1}}{\overline{\psi}} = \pair{\varphi}{\mathbf{1}}$.
\label{lemma_R}
\end{lemma}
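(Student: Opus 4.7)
The plan is to mirror the argument used for Lemma~\ref{lemma_J}, since the hypotheses and conclusions of both lemmas are entirely parallel (with the reversal involution $R$ playing the role of the order-reversing-and-complementing involution $J$). The core task is to verify the operator identity $T^{*} = R T R$ by a direct computation; once this is established, the remaining assertions follow formally from the involution property $R^{2} = I$ and from $R \mathbf{1} = \mathbf{1}$.

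To carry out the first step, I would unfold $(R T R f)(x_{1}, \ldots, x_{m})$ layer by layer. Applying the outer $R$ swaps the arguments to $(x_{m}, \ldots, x_{1})$; applying $T$ then produces an integral whose kernel involves $\chi(t, x_{m}, \ldots, x_{1})$ and whose integrand involves $(R f)(t, x_{m}, \ldots, x_{2})$. The inner $R$ rewrites $(R f)(t, x_{m}, \ldots, x_{2})$ as $f(x_{2}, \ldots, x_{m}, t)$. Invoking the postulated reversal symmetry to replace $\chi(t, x_{m}, \ldots, x_{1})$ by $\chi(x_{1}, \ldots, x_{m}, t)$ then produces exactly the formula for $(T^{*} f)(x_{1}, \ldots, x_{m})$ given in Lemma~\ref{lemma_adjoint}.

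With $T^{*} = R T R$ in hand, the eigenfunction claim is immediate: if $T \varphi = \lambda \varphi$, then
$$
T^{*}(R \varphi)
  = R T R \cdot R \varphi
  = R T \varphi
  = \lambda \cdot R \varphi .
$$
The inner-product identity is likewise a short manipulation, using that $R$ commutes with complex conjugation (since it merely permutes coordinates), that $R$ is self-adjoint, and that $R \mathbf{1} = \mathbf{1}$:
$$
\pair{\mathbf{1}}{\overline{\psi}}
  = \pair{\mathbf{1}}{R \overline{\varphi}}
  = \pair{R \mathbf{1}}{\overline{\varphi}}
  = \pair{\mathbf{1}}{\overline{\varphi}}
  = \pair{\varphi}{\mathbf{1}} .
$$

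The only delicate point is the bookkeeping in the verification of $T^{*} = RTR$, since $R$ reverses $m$ coordinates while the symmetry hypothesis on $\chi$ reverses $m+1$ coordinates; one must keep careful track of how the dummy integration variable $t$ plays the role of the first slot of $\chi$ at one stage and the last slot at another. This is the principal (purely combinatorial) obstacle, and no analytic ingredient beyond Lemma~\ref{lemma_adjoint} is required.
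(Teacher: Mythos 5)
Your proposal is correct and is precisely the argument the paper has in mind: the paper omits the proof of Lemma~\ref{lemma_R} on the grounds that it "is similar to the previous proof," and your unfolding of $(RTRf)(x_{1},\ldots,x_{m})$, followed by the formal eigenfunction and inner-product manipulations using $R^{2}=I$, $R^{*}=R$, and $R\mathbf{1}=\mathbf{1}$, mirrors the proof of Lemma~\ref{lemma_J} step for step. The bookkeeping you flag works out exactly as you describe, since applying the reversal symmetry to $\chi(t,x_{m},\ldots,x_{1})$ yields $\chi(x_{1},\ldots,x_{m},t)$, matching the kernel in Lemma~\ref{lemma_adjoint}.
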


\section{123-Avoiding permutations}
\label{section_123}

A 123-avoiding permutation is a permutation $\pi\in\mathfrak{S}_{n}$ with no
index $j$ so that $\pi_{j}<\pi_{j+1}<\pi_{j+2}$, where $1\leq j\leq n-2$.
Let $\alpha_{n}(123)$ denote the number of $123$-avoiding permutations
in $\mathfrak{S}_{n}$.

\subsection{Eigenvalues and eigenfunctions}

Since $123$-avoiding permutations can be viewed as permutations with no
double descents
Corollary~\ref{corollary_descent_word_avoidance_eigenfunctions}
allows us to recast the
problem of finding eigenfunctions in two variables into finding two
one-variable functions.

\begin{proposition}
The eigenvalues of the operator $T$ are given by
\begin{equation}
\lambda_{k}=\frac{\sqrt{3}}{2 \cdot \pi \cdot \left( k+\frac{1}{3}\right) },
\label{equation_123_eigenvalue}
\end{equation}
where $k\in\mathbb{Z}$ and the associated eigenfunctions
are given by
\begin{equation}
\varphi_{k}=\exp\left( -\frac{x}{2 \cdot \lambda}\right) \cdot
\left\{
\begin{array}{c l}
{\cos\left( \frac{\pi}{6}+\frac{\sqrt{3}}{2}\cdot \frac{x}{\lambda}\right) }
& \text{ if }0 \leq x \leq y \leq 1, \\[2 mm]
{\sin\left( \frac{\pi}{3}+\frac{\sqrt{3}}{2}\cdot \frac{x}{\lambda}\right) }
& \text{ if }0 \leq y \leq x \leq 1.
\end{array}
\right.
\label{equation_123_phi_k}
\end{equation}
\label{proposition_123_eigenvalue}
\end{proposition}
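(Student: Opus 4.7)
The plan is to reduce the eigenvalue equation $T\varphi = \lambda\varphi$ to a system of ordinary differential equations in one variable. Since $123$ is the only permutation of length three with descent word $aa$, avoiding $123$ is the same as avoiding the descent word $aa$, so we are in the setting of Section~\ref{subsection_invariant_subspace} with $m=2$. By Corollary~\ref{corollary_descent_word_avoidance_eigenfunctions}, any eigenfunction with nonzero eigenvalue lies in the invariant subspace $V$ and is determined by two one-variable functions $p_a(x)$ and $p_b(x)$, where $\varphi(x_1,x_2)=p_a(x_1)$ on $P_a=\{x_1\le x_2\}$ and $\varphi(x_1,x_2)=p_b(x_1)$ on $P_b=\{x_1\ge x_2\}$.

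Next I would write the action of $T$ explicitly. The function $\chi(t,x_1,x_2)$ vanishes precisely when $t<x_1<x_2$, so for $(x_1,x_2)\in P_a$ one obtains $T\varphi(x_1,x_2)=\int_{x_1}^{1} p_b(t)\,dt$, while for $(x_1,x_2)\in P_b$ one obtains $T\varphi(x_1,x_2)=\int_{0}^{x_1}p_a(t)\,dt+\int_{x_1}^{1} p_b(t)\,dt$. The eigenvalue equation then becomes two integral equations in $x=x_1$. Differentiating in $x$ yields the linear system $\lambda p_a'(x)=-p_b(x)$ and $\lambda p_b'(x)=p_a(x)-p_b(x)$, together with the boundary conditions $p_a(1)=0$ (evaluate the first integral equation at $x=1$) and $p_b(0)=p_a(0)$ (compare the two equations at $x=0$).

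The coefficient matrix has eigenvalues $(-1\pm i\sqrt{3})/(2\lambda)$, so every solution is a real linear combination of $e^{-x/(2\lambda)}\cos(\sqrt{3}\,x/(2\lambda))$ and $e^{-x/(2\lambda)}\sin(\sqrt{3}\,x/(2\lambda))$. Imposing $p_b(0)=p_a(0)$ cuts the two-dimensional solution space down to the one-parameter family displayed in equation~(\ref{equation_123_phi_k}); verifying that the phases $\pi/6$ for $p_a$ and $\pi/3$ for $p_b$ are consistent with the coupling $\lambda p_a'=-p_b$ reduces to the identity $\tfrac{1}{2}\cos\theta+\tfrac{\sqrt{3}}{2}\sin\theta=\sin(\theta+\pi/6)$. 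Imposing the remaining boundary condition $p_a(1)=0$ yields $\cos(\pi/6+\sqrt{3}/(2\lambda))=0$, equivalently $\pi/6+\sqrt{3}/(2\lambda)=\pi/2+k\pi$, which rearranges to equation~(\ref{equation_123_eigenvalue}). Completeness follows because every nonzero eigenfunction lives in $V$ by the corollary, and simplicity follows because the one-parameter family surviving $p_b(0)=p_a(0)$ is one-dimensional.

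I anticipate no conceptual obstacle; the work is essentially (i) identifying the invariant subspace, (ii) writing the integral operator piecewise on $P_a$ and $P_b$, and (iii) solving a constant-coefficient two-dimensional ODE on $[0,1]$. The main care needed is in the bookkeeping of the two cases when computing $T\varphi$, in checking that the stated eigenfunctions lie in $L^{2}$ (immediate, since they are continuous), and in confirming that the solutions to the ODE also satisfy the original integral equations, which amounts to rechecking the single boundary value $p_a(1)=0$ used to reduce the integral equation to the differential one.
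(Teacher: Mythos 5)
Your argument is correct and follows essentially the same route as the paper: reduce to the invariant subspace $V$ via Corollary~\ref{corollary_descent_word_avoidance_eigenfunctions}, write $T$ piecewise on the two descent polytopes, differentiate the integral equations to a constant-coefficient first-order system with boundary conditions $p_a(1)=0$ and $p_a(0)=p_b(0)$, and solve. The only substantive difference is that you work with the real trigonometric basis $e^{-x/(2\lambda)}\cos(\sqrt{3}x/(2\lambda))$, $e^{-x/(2\lambda)}\sin(\sqrt{3}x/(2\lambda))$ while the paper uses the complex exponentials $e^{\omega x/\lambda}$, $e^{\omega^2 x/\lambda}$ — equivalent bookkeeping. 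One small caution: your closing remark that ``simplicity follows because the one-parameter family surviving $p_b(0)=p_a(0)$ is one-dimensional'' only shows the geometric multiplicity is one; simplicity in the paper's sense (index~1) requires ruling out generalized eigenfunctions, which the paper does separately in Proposition~\ref{proposition_123_simple}. Since the proposition you are proving does not assert simplicity, this does not create a gap, but it should not be presented as a consequence of this computation.
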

\begin{proof}
Avoiding the pattern $123$ is equivalent to
avoiding the descent set pattern $aa$.
Hence Corollary~\ref{corollary_descent_word_avoidance_eigenfunctions}.
states that the eigenfunctions $\varphi$
can be written as
$$\varphi = \left\{
              \begin{array}{c l}
                 p(x) & \text{ if } 0 \leq x \leq y \leq 1 , \\
                 q(x) & \text{ if } 0 \leq y \leq x \leq 1 .
              \end{array}
            \right. $$
Then the defining equations
for eigenvalues and eigenfunctions reduces to the
integral system:
\begin{eqnarray}
\lambda \cdot p(x) & = & \int_{x}^{1} q(t) dt ,
\label{equation_123_integral.system.1} \\
\lambda \cdot q(x) & = & \int_{0}^{x} p(t) dt+\int_{x}^{1} q(t) dt .
\label{equation_123_integral.system.2}
\end{eqnarray}
First, differentiating with respect to $x$, we obtain the first-order system
\begin{eqnarray}
\lambda \cdot p^{\prime}(x) & = & -q(x) ,
\label{equation_d_p} \\
\lambda \cdot q^{\prime}(x) & = & p(x)-q(x) .
\label{equation_d_q}
\end{eqnarray}
These equations have only the trivial solution if $\lambda=0$, so $\lambda=0$
is not an eigenvalue. If $\lambda\neq0$ then
the first-order system~(\ref{equation_d_p})--(\ref{equation_d_q})
implies the second-order equation
$$
\lambda^{2}\cdot p^{\prime\prime}(x)+\lambda \cdot p^{\prime}(x)+p(x)=0.
$$
This equation has the general solution
\begin{equation}
p(x)=A\cdot \exp\left( \frac{\omega}{\lambda}\cdot x\right) +B\cdot
\exp\left( \frac{\omega^{2}}{\lambda}\cdot x\right) ,
\label{equation_p_gen_123}
\end{equation}
where $\omega=\exp\left( \frac{2 \cdot \pi \cdot i}{3}\right) $. That is, $\omega$
satisfies the relation $\omega^{2}+\omega+1=0$.
Moreover, equation~(\ref{equation_d_p}) implies that
\begin{equation}
q(x) 
     =
- \omega \cdot A
         \cdot      \exp\left( \frac{\omega}{\lambda}\cdot x\right)
- \omega^{2} \cdot B 
             \cdot \exp\left( \frac{\omega^{2}}{\lambda}\cdot x\right) .
\label{equation_q_gen_123}
\end{equation}

Setting $x=0$ and $x=1$ in equations~(\ref{equation_123_integral.system.1})
and~(\ref{equation_123_integral.system.2})
and using that $\lambda\neq0$ we obtain the
boundary conditions:
\begin{eqnarray}
p(0) & = & q(0) ,
\label{equation_boundary_condition_0_123} \\
p(1) & = & 0 .
\label{equation_boundary_condition_1_123}
\end{eqnarray}
Substituting the expressions for $p(x)$ and $q(x)$ from
equations~(\ref{equation_d_p}) and~(\ref{equation_d_q})
into boundary
condition~(\ref{equation_boundary_condition_0_123})
we
obtain $A+B=-\omega\cdot A-\omega^{2}\cdot B$. This is equivalent to
$\omega\cdot A+B=0$. Hence we may set $A=1/2 \cdot \exp\left( \frac{\pi \cdot i}{6}
\right)$ and $B=\overline{A}=1/2 \cdot \exp\left( -\frac{\pi \cdot i}{6}\right) $.
Substituting equation~(\ref{equation_d_p}) into
the second boundary condition~(\ref{equation_boundary_condition_1_123})
implies that
$$
A\cdot \exp\left( \frac{\omega}{\lambda}\right)
=
-B\cdot \exp\left( \frac{\omega^{2}}{\lambda}\right)
=
\omega\cdot A\cdot \exp\left( \frac {\omega^{2}}{\lambda}\right) .
$$
Cancelling $A$ on both sides and taking the logarithm gives
$$
\frac{\omega}{\lambda}
=
\frac{2 \cdot \pi \cdot i}{3}+\frac{\omega^{2}}{\lambda}+2 \cdot \pi \cdot i\cdot k,
$$
where $k$ is an integer. Since $\omega-\omega^{2}=\sqrt{3}\cdot i$ we obtain
expression~(\ref{equation_123_eigenvalue}). Moreover $p(x)$ is given by
\begin{eqnarray*}
p(x)
  & = &
\frac{1}{2} \cdot
\exp\left( \frac{\pi \cdot i}{6}\right) \cdot
\exp\left( \omega \cdot \frac{x}{\lambda}\right)
+
\frac{1}{2} \cdot
\exp\left( -\frac{\pi \cdot i}{6}\right) \cdot
\exp\left( \omega^{2} \cdot \frac{x}{\lambda}\right) \\
  & = &
\frac{\exp\left( -\frac{x}{2 \cdot \lambda}\right) }{2}
\cdot
\left(
   \exp\left(\frac{\pi \cdot i}{6}
         +
             \frac{\sqrt{3}}{2}\cdot i\cdot \frac{x}{\lambda}\right)
  +
   \exp\left( -\frac{\pi \cdot i}{6}
         -
             \frac{\sqrt{3}}{2}\cdot i\cdot \frac{x}{\lambda}\right)
\right) \\
  & = &
\exp\left( -\frac{x}{2 \cdot \lambda}\right)
   \cdot
\cos\left( \frac{\pi}{6}
             +
           \frac{\sqrt{3}}{2} \cdot \frac{x}{\lambda}\right) .
\end{eqnarray*}
Now equation~(\ref{equation_d_p}) implies the claimed expression for $q(x)$.
\end{proof}

Note that the eigenvalues are ordered by
$$
\lambda_{0}>-\lambda_{-1}>\lambda_{1}>-\lambda_{-2}>\lambda_{2}>-\lambda
_{-3}>\lambda_{3}>\cdots>0 .
$$
Furthermore, the calculations in 
Proposition~\ref{proposition_123_eigenvalue}
showed that they all have a unique eigenfunction.
It remains to show that they are simple.

\begin{proposition}
The eigenvalues $\lambda$ of the operator $T$ are simple, that is,
they have index $1$.
In other words, 
there is no function $f(x,y)$ such that
$\lambda_{k} \cdot f = T(f) + \varphi_{k}$.
\label{proposition_123_simple}
\end{proposition}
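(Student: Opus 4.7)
The plan is to follow the same ODE-reduction strategy used in the proof of Proposition~\ref{proposition_123_eigenvalue}, but now with a non-homogeneous forcing term coming from $\varphi_k$. By Corollary~\ref{corollary_descent_word_avoidance_eigenfunctions_II}, any potential generalized eigenfunction $f$ restricted to each of the two descent polytopes depends only on the first coordinate, so I may write $f(x,y) = r(x)$ for $0 \leq x \leq y \leq 1$ and $f(x,y) = s(x)$ for $0 \leq y \leq x \leq 1$. Substituting into $\lambda_k \cdot f = T f + \varphi_k$ gives
\begin{align*}
\lambda_k \cdot r(x) &= \int_x^1 s(t) \, dt + p(x), \\
\lambda_k \cdot s(x) &= \int_0^x r(t) \, dt + \int_x^1 s(t) \, dt + q(x),
\end{align*}
and evaluating at $x = 0$ and $x = 1$, together with $p(0) = q(0)$ and $p(1) = 0$, yields the boundary conditions $r(0) = s(0)$ and $r(1) = 0$.

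Differentiating and using $s(x) = p'(x) - \lambda_k \cdot r'(x)$ reduces the problem to the second-order ODE
$$
\lambda_k^2 \cdot r''(x) + \lambda_k \cdot r'(x) + r(x) = \lambda_k \cdot p''(x) + p'(x) - q'(x).
$$
The homogeneous operator on the left is exactly the one from Proposition~\ref{proposition_123_eigenvalue}, with fundamental solutions $\exp(\omega x / \lambda_k)$ and $\exp(\omega^2 x / \lambda_k)$. Using $\omega^3 = 1$ and $1 + \omega + \omega^2 = 0$, a short calculation shows that the right-hand side is a \emph{nonzero} linear combination of these same two exponentials. Hence we are in the resonant regime, and any particular solution must carry genuine $x \cdot \exp(\omega x / \lambda_k)$ and $x \cdot \exp(\omega^2 x / \lambda_k)$ contributions, while the general solution retains only the usual two homogeneous free parameters.

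The main obstacle is then to verify that the two boundary conditions $r(0) = s(0)$ and $r(1) = 0$ cannot simultaneously be met for any choice of these parameters: the resonant $x$-factors at $x = 1$ produce a residual that the two-parameter homogeneous family cannot cancel. Equivalently, and perhaps more cleanly for write-up, I can package this obstruction via the Fredholm alternative. Because $T^{m}$ is compact and $\lambda_k$ is an isolated pole of $T$, the operator $\lambda_k I - T$ is Fredholm of index zero, so $(\lambda_k I - T) f = \varphi_k$ has a solution if and only if $\pair{\varphi_k}{\overline{\psi_k}} = 0$. The function $\chi$ enjoys the symmetry $\chi(x_1, x_2, x_3) = \chi(1 - x_3, 1 - x_2, 1 - x_1)$, since the property of being strictly increasing is preserved by this involution, so Lemma~\ref{lemma_J} applies and $\psi_k = J \varphi_k$. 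The claim then reduces to the explicit integral computation $\pair{\varphi_k}{\overline{\psi_k}} \neq 0$, which splits into two one-dimensional integrals over $[0,1]$ via the substitution $u = 1 - y$ using the explicit formula~\eqref{equation_123_phi_k}, and whose nonvanishing is consistent with the nonzero leading constant $\exp(1/(2 \cdot \lambda_0))$ appearing in~\eqref{equation_123}.
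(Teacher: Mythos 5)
Your proposal actually contains two parallel arguments, and it is worth distinguishing them from what the paper does.

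The first half — reducing to the first-order system via Corollary~\ref{corollary_descent_word_avoidance_eigenfunctions_II}, obtaining the same boundary conditions $r(1)=0$ and $r(0)=s(0)$, then passing to a second-order forced ODE for $r$ — is essentially the paper's own sketch, just recast as a scalar second-order equation with resonant forcing rather than a $2\times 2$ first-order system. Up to the identification of resonance (the forcing $\lambda_k p'' + p' - q'$ is indeed a nonzero combination of the two homogeneous exponentials: the coefficients work out to $(A/\lambda_k)(2\omega^2+\omega)$ and $(B/\lambda_k)(2\omega+\omega^2)$, both nonzero), you stop in the same place the paper's sketch stops, i.e., asserting that the two boundary conditions are then unsatisfiable without actually running the computation. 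As a standalone proof this half is no more complete than the paper's sketch.

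The second half — the Fredholm-alternative packaging — is a genuinely different route, and arguably a cleaner one. The logic is sound: since $T^m$ is Hilbert–Schmidt and $\lambda_k\neq 0$, the operator $\lambda_k I - T$ is Fredholm of index $0$; Proposition~\ref{proposition_123_eigenvalue} shows $\ker(\lambda_k I - T)$ is one-dimensional, so $\ker(\lambda_k I - T^{*})$ is one-dimensional and spanned by $\psi_k = J\varphi_k$ (the symmetry check for $\chi$ is correct); and the equation $(\lambda_k I - T)f = \varphi_k$ is solvable iff $\pair{\varphi_k}{\overline{\psi_k}} = 0$. What this buys you is that instead of chasing resonant terms through boundary conditions, you reduce simplicity to a single explicit inner-product evaluation. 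However, your write-up does not actually close the loop: you say the nonvanishing ``is consistent with'' the leading constant in~\eqref{equation_123}, which is no argument at all (and would be circular, since that asymptotic was derived under the assumption of simplicity). What you want to invoke is the direct computation $\pair{\varphi_k}{\overline{\psi_k}} = \tfrac{3}{4}(-1)^k\lambda_k\exp(-1/(2\lambda_k)) \neq 0$ of Proposition~\ref{proposition_123_res}; that proposition's proof is independent of the present one, so the forward reference is harmless, but it must be cited (or the integral carried out) rather than gestured at. With that repair the Fredholm argument is complete and gives a tighter proof than the paper's ODE sketch.
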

\noindent
{\em Sketch of proof.}
Using
Corollary~\ref{corollary_descent_word_avoidance_eigenfunctions_II}
we can write the function $f$ as
$$ f(x,y) = \left\{\begin{array}{c l}
                     r(x) & \text{ if } 0 \leq x \leq y \leq 1, \\
                     s(x) & \text{ if } 0 \leq y \leq x \leq 1.
                   \end{array} \right.  $$
Then equation reduces to the
integral system:
\begin{eqnarray*}
\lambda \cdot r(x)
  & = &
\int_{x}^{1} s(t) dt +  p(x) , \\
\lambda \cdot s(x)
  & = &
\int_{0}^{x} r(t) dt + \int_{x}^{1} s(t) dt + q(x) .
\end{eqnarray*}
Note that we obtain the two boundary conditions
$r(1) = 0$ and $r(0) = s(0)$.
Next differentiating with respect to $x$, we have the first-order system:
\begin{eqnarray*}
\lambda \cdot r^{\prime}(x) & = & -s(x)     + p^{\prime}(x) , \\
\lambda \cdot s^{\prime}(x) & = & r(x)-s(x) + q^{\prime}(x) .
\end{eqnarray*}
This system of differential equations
can be solved as in the proof
of Proposition~\ref{proposition_123_eigenvalue}.
However, the solution does not satisfy the boundary conditions,
completing the sketch.

By applying the involution $J$ we obtain the adjoint eigenfunction
\begin{equation}
\psi_{k}=\exp\left( \frac{y-1}{2 \cdot \lambda}\right) \cdot \left\{
\begin{array}{c l}
\cos\left( \frac{\pi}{6}+\frac{\sqrt{3}}{2}\cdot \frac{1-y}{\lambda}\right) &
\text{ if } 0 \leq x \leq y \leq 1, \\[2 mm]
\sin\left( \frac{\pi}{3}+\frac{\sqrt{3}}{2}\cdot \frac{1-y}{\lambda}\right) &
\text{ if } 0 \leq y \leq x \leq 1.
\end{array}
\right.
\label{equation_123_psi_k}
\end{equation}

\begin{proposition}
Let $\lambda$ be an eigenvalue of $T$
with eigenfunctions $\varphi$ and
let $\psi$ be the eigenfunction
of the adjoint operator $T^{*}$ with eigenvalue $\lambda$.
Then the following identities hold:
\begin{eqnarray}
\pair{\varphi}{\mathbf{1}}
  & = &
\pair{\mathbf{1}}{\overline{\psi}}
   =
\frac{\sqrt{3}}{2} \cdot \lambda^{2} ,
\label{equation_123_1_phi} \\
\pair{\varphi}{\overline{\psi}}
  & = &
\frac{3}{4} \cdot (-1)^{k} \cdot
\lambda \cdot \exp\left( -\frac {1}{2 \cdot \lambda}\right) .
\label{equation_123_psi_phi}
\end{eqnarray}
In particular
\begin{equation}
\frac{\pair{\varphi}{\mathbf{1}}
      \cdot \pair{\mathbf{1}}{\overline{\psi}}}
     {\pair{\varphi}{\overline{\psi}}}
 =
 (-1)^{k} \cdot \lambda^{3} \cdot
\exp\left( \frac{1}{2 \cdot \lambda}\right) .
\label{equation_123_res}
\end{equation}
\label{proposition_123_res}
\end{proposition}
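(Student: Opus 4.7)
My plan is to reduce the two-dimensional inner products to one-variable integrals using the piecewise structure of $\varphi$ and $\psi$ on the two descent polytopes, and then to evaluate those integrals in closed form using a combination of the integral system~(\ref{equation_123_integral.system.1})--(\ref{equation_123_integral.system.2}), the first-order system~(\ref{equation_d_p})--(\ref{equation_d_q}), and the eigenvalue identity $\sqrt{3}/(2\lambda) = \pi(k + 1/3)$. The first equality in~(\ref{equation_123_1_phi}), namely $\pair{\varphi}{\mathbf{1}} = \pair{\mathbf{1}}{\overline{\psi}}$, is immediate from Lemma~\ref{lemma_J}, since for $S = \{123\}$ the function $\chi$ satisfies $\chi(x_1,x_2,x_3) = \chi(1 - x_3, 1 - x_2, 1 - x_1)$; this lemma also justifies~(\ref{equation_123_psi_k}) by giving $\psi = J\varphi$.

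For $\pair{\varphi}{\mathbf{1}}$, I would use Corollary~\ref{corollary_descent_word_avoidance_eigenfunctions} to write
$$\pair{\varphi}{\mathbf{1}} = \int_0^1 (1-x)\, p(x)\, dx + \int_0^1 x\, q(x)\, dx.$$
Integration by parts with $q = -\lambda p'$ and the boundary condition $p(1) = 0$ reduces the second integral to $\lambda \int_0^1 p$; a second integration by parts using $p = \lambda q' + q$ expresses $\int_0^1 x\, p$ in terms of $\int_0^1 p$, $\int_0^1 q$, and $q(1)$. The integrals $\int_0^1 p$ and $\int_0^1 q$ are read directly off of~(\ref{equation_123_integral.system.1})--(\ref{equation_123_integral.system.2}) at $x = 0$ and $x = 1$, yielding $\int_0^1 p = \lambda q(1)$ and $\int_0^1 q = \lambda q(0) = \frac{\sqrt{3}}{2}\lambda$. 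The $q(1)$ contributions cancel and the answer $\frac{\sqrt{3}}{2}\lambda^2$ emerges.

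The substantial work lies in computing $\pair{\varphi}{\overline{\psi}} = \pair{\varphi}{\psi}$. Applying the corollary to $\psi$ identifies $\psi(x,y)$ with $p(1-y)$ on $\{x \leq y\}$ and with $q(1-y)$ on $\{x \geq y\}$. Performing the inner $y$-integration, changing variable $s = 1 - y$, and invoking $\int_{1-x}^1 q(s)\, ds = \lambda p(1-x)$ together with $\int_0^{1-x} p(s)\, ds = \lambda(q(1-x) - p(1-x))$ (both direct consequences of the integral system) reduces everything to
$$\pair{\varphi}{\psi} = 2\lambda I_2 - \lambda I_1, \qquad I_1 = \int_0^1 p(x) p(1-x)\, dx, \qquad I_2 = \int_0^1 p(x) q(1-x)\, dx,$$
where the identity $\int_0^1 q(x) p(1-x)\, dx = I_2$ comes from the substitution $x \mapsto 1-x$. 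The main obstacle is evaluating $I_1$ and $I_2$ in closed form. For this I would substitute the complex-exponential forms~(\ref{equation_p_gen_123})--(\ref{equation_q_gen_123}) with $A = \frac{1}{2} e^{i\pi/6}$ and $B = \overline{A}$, expand each product into four exponential terms, and collapse the answer using $\omega + \omega^2 = -1$ along with the closed-form values $e^{\omega/\lambda} = (-1)^k e^{-1/(2\lambda)} e^{i\pi/3}$ and $e^{\omega^2/\lambda} = \overline{e^{\omega/\lambda}}$ that follow from the eigenvalue condition. The bookkeeping is tedious but mechanical and should give $I_1 = \frac{2\lambda - 1}{4}(-1)^k e^{-1/(2\lambda)}$ and $I_2 = \frac{1+\lambda}{4}(-1)^k e^{-1/(2\lambda)}$; the combination $2\lambda I_2 - \lambda I_1$ then simplifies exactly to $\frac{3}{4}(-1)^k \lambda\, e^{-1/(2\lambda)}$, establishing~(\ref{equation_123_psi_phi}).

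Identity~(\ref{equation_123_res}) follows immediately by substituting~(\ref{equation_123_1_phi}) and~(\ref{equation_123_psi_phi}) into the ratio and simplifying using $(-1)^{-k} = (-1)^k$. The delicate step throughout is the four-term exponential expansion of $I_1$ and $I_2$; every other manipulation is either a symmetry statement from Lemma~\ref{lemma_J} or a direct use of the integral and differential equations that already define $p$ and $q$.
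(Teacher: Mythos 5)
Your proposal is correct and follows essentially the same route as the paper: both decompose the inner products over the two descent polytopes, use the $J$-symmetry to identify $\psi(x,y)$ with $p(1-y)$ and $q(1-y)$, and reduce to one-variable integrals. Your claimed intermediate values $I_1 = \frac{2\lambda-1}{4}(-1)^k e^{-1/(2\lambda)}$ and $I_2 = \frac{1+\lambda}{4}(-1)^k e^{-1/(2\lambda)}$ check out, as does $2\lambda I_2 - \lambda I_1 = \frac{3}{4}(-1)^k\lambda e^{-1/(2\lambda)}$; the only cosmetic difference is that you systematically use the integral system~(\ref{equation_123_integral.system.1})--(\ref{equation_123_integral.system.2}) and the first-order relations to collapse the inner integrals before resorting to explicit exponentials, which makes the $\pair{\varphi}{\mathbf{1}}$ evaluation trigonometry-free, whereas the paper simply states ``explicit computation shows.''
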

\begin{proof}
In the following calculations we use the facts that
$\cos(\sqrt {3}/(2 \cdot \lambda))=(-1)^{k}/2$
and
$\sin(\sqrt{3}/(2 \cdot \lambda))=(-1)^{k} \sqrt{3}/2$.
We also use the expression for $\varphi$
in equation~(\ref{equation_123_phi_k}).
First, we note that
\begin{eqnarray*}
\pair{\varphi}{\mathbf{1}}
  & = &
    \int_{0 \leq x \leq y \leq 1} p(x) dx dy
  + \int_{0 \leq y \leq x \leq 1} q(x) dx dy \\
  & = &
\int_{0}^{1} (1-x) p(x) \cdot dx + \int_{0}^{1} x \cdot q(x) dx .
\end{eqnarray*}
Explicit computation shows that
\begin{eqnarray*}
\int_{0}^{1} (1-x) \cdot p(x) dx
  & = &
\frac{\sqrt{3}}{2} \cdot \lambda^{2} \cdot
  (1 - (-1)^{k} \cdot \exp(-1/(2 \cdot \lambda))) \\
\int_{0}^{1} x \cdot q(x) dx 
  & = &
\frac{\sqrt{3}}{2} \cdot \lambda^{2} \cdot(-1)^{k}
\cdot \exp(-1/(2 \cdot \lambda))
\end{eqnarray*}
which shows~(\ref{equation_123_1_phi}). Next, using~(\ref{equation_123_phi_k})
and~(\ref{equation_123_psi_k}) and dropping subscripts as before, we have
\begin{eqnarray*}
\pair{\varphi}{\overline{\psi}}
  & = &
  \int_{0 \leq x \leq y \leq 1} p(x) \cdot p(1-y) dx dy
+ \int_{0 \leq y \leq x \leq 1} q(x) \cdot q(1-y) dx dy \\
  & = &
\int_{0}^{1} \left( p(x)\cdot \int_{x}^{1} p(1-y) dy
+q(x)\cdot \int_{0}^{x} q(1-y) dy\right) dx.
\end{eqnarray*}
Carrying out the $y$ integration and simplifying, we obtain
$$
\pair{\varphi}{\overline{\psi}}
  =
\frac{3}{4} \cdot (-1)^{k} \cdot
\int_{0}^{1} \lambda \cdot \exp\left( -\frac {1}{2 \cdot \lambda}\right) dx
$$
which gives~(\ref{equation_123_psi_phi}).
\end{proof}

\subsection{Asymptotics}

The above computations show that all eigenvalues of $T$ are simple and
give the eigenvalues and the coefficients explicitly.
We thus obtain the following expansion for
$\pair{T^{n-2}(\mathbf{1})}{\mathbf{1}}
   =
 \alpha_{n}(123)/n!$
as an immediate consequence of 
Theorem~\ref{theorem_expansion},
Propositions~\ref{proposition_123_eigenvalue}
and~\ref{proposition_123_res}.

\begin{theorem}
\label{theorem_123}
Let $K$ be a non-negative integer.
The number of $123$-avoiding permutations
satisfies the following asymptotic expansion
$$
\frac{\alpha_{n}(123)}{n!}
   =
\sum_{\left| k\right| \leq K}
    (-1)^{k}
      \cdot
    \exp\left( \frac{1}{2 \cdot \lambda_{k}}\right)
      \cdot
    \lambda_{k}^{n+1}
   +
 O\left( r_{K+1}^{n}\right)  ,
$$
where $\lambda_{k}$ is given by~(\ref{equation_123_eigenvalue}),
$r_{k} = | \lambda_{-k} |
       = \sqrt{3}/\left(2 \cdot \pi \cdot \left( k-\frac{1}{3}\right)\right)$
and the sum contains $2K+1$ terms
corresponding to the $2K+1$ largest eigenvalues.
\end{theorem}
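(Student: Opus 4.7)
The strategy is to recognize that Theorem~\ref{theorem_123} is essentially a direct corollary of the spectral expansion Theorem~\ref{theorem_expansion} together with the explicit eigendata already assembled in Propositions~\ref{proposition_123_eigenvalue}, \ref{proposition_123_simple}, and~\ref{proposition_123_res}. First, since $S=\{123\}\subseteq\mathfrak{S}_{3}$ we are in the case $m=2$, so by Proposition~\ref{proposition_key} we have $\alpha_{n}(123)/n!=\pair{T^{n-2}(\mathbf{1})}{\mathbf{1}}$ for $n\geq 2$, and everything reduces to analyzing powers of $T$ against $\mathbf{1}$.

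Next, I would collect the spectral input. Proposition~\ref{proposition_123_eigenvalue} lists the complete set of eigenvalues $\{\lambda_{k}\}_{k\in\mathbb{Z}}$ with the explicit formula~(\ref{equation_123_eigenvalue}), and with the ordering of moduli recorded immediately after that proposition: $\lambda_{0}>|\lambda_{-1}|>|\lambda_{1}|>|\lambda_{-2}|>|\lambda_{2}|>\cdots$. Proposition~\ref{proposition_123_simple} verifies that each eigenvalue is simple. The adjoint eigenfunctions $\psi_{k}$ are produced from $\varphi_{k}$ via the involution $J$ in~(\ref{equation_123_psi_k}), which is legitimate because $\chi$ for $S=\{123\}$ has the symmetry required by Lemma~\ref{lemma_J} (the pattern $123$ is invariant under the reverse-complement involution). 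Finally, Proposition~\ref{proposition_123_res} evaluates the spectral coefficient:
\[
\frac{\pair{\varphi_{k}}{\mathbf{1}}\cdot\pair{\mathbf{1}}{\overline{\psi_{k}}}}{\pair{\varphi_{k}}{\overline{\psi_{k}}}}
\;=\;(-1)^{k}\cdot\lambda_{k}^{3}\cdot\exp\!\left(\frac{1}{2\cdot\lambda_{k}}\right).
\]

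To assemble the asymptotic, given a non-negative integer $K$, I would choose $r$ strictly between $|\lambda_{-(K+1)}|=r_{K+1}$ and $|\lambda_{K}|$ (a gap exists by the strict ordering above), and apply Theorem~\ref{theorem_expansion}. The eigenvalues of modulus exceeding $r$ are precisely those with $|k|\leq K$, and each is simple by Proposition~\ref{proposition_123_simple}, so the hypotheses of Theorem~\ref{theorem_expansion} are met. Substituting the coefficient formula above with exponent $n-m=n-2$ yields
\[
\frac{\alpha_{n}(123)}{n!}
=\sum_{|k|\leq K}(-1)^{k}\cdot\exp\!\left(\frac{1}{2\cdot\lambda_{k}}\right)\cdot\lambda_{k}^{n-2}\cdot\lambda_{k}^{3}+O(r^{n}),
\]
and since $\lambda_{k}^{n-2}\cdot\lambda_{k}^{3}=\lambda_{k}^{n+1}$ and $r$ can be taken arbitrarily close to $r_{K+1}$ (using the observation in~(\ref{equation_expansion_II})), the error term becomes $O(r_{K+1}^{n})$, matching the statement.

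There is no genuine obstacle to overcome here; the only points requiring care are bookkeeping, namely confirming the ordering of eigenvalue moduli so that the $2K+1$ eigenvalues with $|k|\leq K$ really are the leading ones, and verifying that the gap between $|\lambda_{K}|$ and $r_{K+1}$ allows the choice of $r$ used to invoke Theorem~\ref{theorem_expansion} with the sharpened error term~(\ref{equation_expansion_II}). Both follow at once from the explicit formula~(\ref{equation_123_eigenvalue}).
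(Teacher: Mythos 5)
Your proposal is correct and follows exactly the same route as the paper: the theorem is deduced as an immediate consequence of Theorem~\ref{theorem_expansion} combined with the explicit eigendata from Propositions~\ref{proposition_123_eigenvalue}, \ref{proposition_123_simple}, and~\ref{proposition_123_res}, and you correctly track the exponent bookkeeping $\lambda_k^{n-2}\cdot\lambda_k^3 = \lambda_k^{n+1}$. The only slight imprecision is in your final sentence: taking $r$ ``arbitrarily close to $r_{K+1}$'' (from below) would yield an error $O(r^n)$ with $r>r_{K+1}$, which is weaker than claimed; the correct mechanism, which you do cite via~(\ref{equation_expansion_II}), is to apply Theorem~\ref{theorem_expansion} with one extra eigenvalue $\lambda_{-(K+1)}$ included and a smaller $r$, then absorb that extra term into the $O(r_{K+1}^n)$ error.
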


\section{213-Avoiding permutations}
\label{section_213}

A 213-avoiding permutation is a permutation $\pi\in\mathfrak{S}_{n}$ which
contains no sequence of the form
$$
\pi_{j+1}<\pi_{j}<\pi_{j+2}
$$
for any $j$ with $1\leq j\leq n-2$. We denote the number of
213-avoiding permutations of $\mathfrak{S}_{n}$ by
$\alpha_{n}(213)$. Thus, $S$ consists of the single permutation
$213$ and
$$
\chi(x_{1},x_{2},x_{3})
=
\left\{ \begin{array}{c l}
           0 & \text{if } x_{2} \leq x_{1} \leq x_{3}, \\
           1 & \text{otherwise}.
        \end{array} \right. $$

By symmetry, the study of $213$-avoiding permutations
is equivalent to $132$-avoiding permutations,
$231$-avoiding permutations and $312$-avoiding
permutations. However the case of $213$-avoiding permutations gives the most
straightforward equations.

\subsection{Eigenvalues and eigenfunctions}

In what follows, we will make use of the error function
\begin{equation}
\erf(x)
   =
\frac{2}{\sqrt{\pi}} \cdot \int_{0}^{x}\exp(-t^{2}) dt
\label{equation_erf}
\end{equation}
which extends to an entire function on $\mathbb{C}$, and the function
\begin{equation}
q(x)=\exp\left( -\frac{x^{2}}{2 \cdot \lambda^{2}}\right) .
\label{equation_q}
\end{equation}

Let
$$ f(x,y)
   =
\left\{ \begin{array}{c l}
           p(x,y) & \text{ if }0 \leq x \leq y \leq 1, \\
           q(x,y) & \text{ if }0 \leq y \leq x \leq 1.
        \end{array} \right. $$
Then
$$
(T f)(x,y)
  =  
\left\{ \begin{array}{c l}
            \int_{0}^{x} p(t,x) dt + \int_{y}^{1} q(t,x) dt &
            \text{if }0 \leq x \leq y \leq 1, \\[2 mm]
            \int_{0}^{x} p(t,x) dt + \int_{x}^{1} q(t,x) dt &
            \text{if }0 \leq y \leq x \leq 1.
        \end{array} \right. $$
Now we characterize the nonzero eigenvalues and eigenfunctions.

\begin{proposition}
The non-zero eigenvalues $\lambda$ of the operator $T$ satisfy the
equation
\begin{equation}
\erf\left( \frac{1}{\sqrt{2}\cdot \lambda}\right)
   =
\sqrt{\frac{2}{\pi}}
\label{equation_213_eigenvalue}
\end{equation}
and the corresponding eigenfunctions are
$$
\varphi(x,y)
  =
\left\{ \begin{array}{c l}
           q(x) - \frac{1}{\lambda} \cdot \int_{x}^{y} q(t) dt &
           \text{if } 0 \leq x \leq y \leq 1, \\[2 mm]
           q(x) &
           \text{if } 0 \leq y \leq x \leq 1,
        \end{array} \right. $$
where $q(x)$ is given by~(\ref{equation_q}).
\end{proposition}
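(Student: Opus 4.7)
The plan is to exploit the piecewise structure of $\varphi$ to reduce the two-variable eigenvalue problem $\lambda \varphi = T\varphi$ to a single ODE for a one-variable function, and then to extract the transcendental eigenvalue equation from a boundary condition. Write $\varphi(x,y) = p(x,y)$ on the simplex $\{x \leq y\}$ and $\varphi(x,y) = q(x,y)$ on $\{y \leq x\}$. Substituting into the explicit expression for $T\varphi$ given just before the proposition, one sees that the right-hand side in the region $y \leq x$,
$$\int_{0}^{x} p(t,x)\, dt + \int_{x}^{1} q(t,x)\, dt,$$
has no $y$ dependence. Since $\lambda \neq 0$, this forces $q(x,y)$ to be a function of $x$ alone; call it $q(x)$.

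Next I turn to the other region. In $\{x \leq y\}$ the equation reads
$$\lambda p(x,y) = \int_{0}^{x} p(t,x)\, dt + \int_{y}^{1} q(t)\, dt,$$
and at the diagonal $y = x$ both pieces of the eigenvalue equation have the same right-hand side, so $p(x,x) = q(x)$. Subtracting this boundary value from the $p$-equation gives the explicit formula
$$p(x,y) = q(x) - \frac{1}{\lambda} \int_{x}^{y} q(t)\, dt,$$
which is exactly the claimed form of $\varphi$ on $\{x \leq y\}$. The remaining task is to show that $q$ is forced to be a Gaussian. Substitute this formula for $p$ into the equation $\lambda q(x) = \int_{0}^{x} p(t,x)\, dt + \int_{x}^{1} q(t)\, dt$, and swap the order of integration in the double integral $\int_{0}^{x}\!\int_{t}^{x} q(s)\, ds\, dt = \int_{0}^{x} s\, q(s)\, ds$ by Fubini. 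This collapses the equation to
$$\lambda q(x) = \int_{0}^{1} q(t)\, dt - \frac{1}{\lambda} \int_{0}^{x} t\, q(t)\, dt.$$

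Differentiating in $x$ yields the first-order linear ODE $\lambda^{2} q'(x) + x\, q(x) = 0$, whose solutions are scalar multiples of $q(x) = \exp(-x^{2}/(2\lambda^{2}))$, matching equation~(\ref{equation_q}). Evaluating the integral equation at $x=0$ gives the single scalar constraint $\lambda\, q(0) = \int_{0}^{1} q(t)\, dt$, and computing the right-hand side via the substitution $t = \sqrt{2}\,\lambda u$ expresses this as $\lambda = \lambda \sqrt{\pi/2}\cdot \erf(1/(\sqrt{2}\lambda))$, which is precisely equation~(\ref{equation_213_eigenvalue}) after cancelling $\lambda$. The step I expect to require the most care is the Fubini swap followed by recognizing that the resulting scalar integral equation reduces, after differentiation, to a single boundary condition (the other apparent boundary condition at $x=1$ will be automatically satisfied once the ODE holds, and should be verified as a consistency check rather than as an additional constraint).
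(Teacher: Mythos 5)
Your proof is correct and follows essentially the same route as the paper: reduce to $q = q(x)$ from the $y$-independence of the second defining equation, derive $p(x,y) = q(x) - \frac{1}{\lambda}\int_x^y q(t)\,dt$, substitute back and apply Fubini to obtain the scalar integral equation, differentiate to get the Gaussian ODE, and evaluate at $x=0$ to extract the $\erf$ condition. The only cosmetic difference is how you obtain the formula for $p$: you argue via continuity across the diagonal ($p(x,x)=q(x)$) and then subtract, whereas the paper subtracts the $q$-defining equation from the $p$-defining equation directly; these are algebraically the same step.
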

\begin{proof}
The defining relations for the eigenfunctions are
\begin{eqnarray}
\lambda \cdot p(x,y)
   & = &
\int_{0}^{x} p(t,x) dt + \int_{y}^{1} q(t,x) dt ,
\label{equation_213_defining_I} \\
\lambda \cdot q(x,y)
   & = &
\int_{0}^{x} p(t,x) dt + \int_{x}^{1} q(t,x) dt .
\label{equation_213_defining_II}
\end{eqnarray}
Now observe that in the right-hand side of
equation~(\ref{equation_213_defining_II})
there is no dependency on the variable $y$.
Hence we may replace $q(x,y)$ with $q(x)$. Now subtract
equation~(\ref{equation_213_defining_II})
from equation~(\ref{equation_213_defining_I})
$$
\lambda \cdot(p(x,y)-q(x))=-\int_{x}^{y} q(t) dt.
$$
That is,
\begin{equation}
p(x,y)=q(x)-\frac{1}{\lambda} \cdot\int_{x}^{y} q(t) dt.
\label{equation_213_p}
\end{equation}
Substitute equation~(\ref{equation_213_p}) into
equation~(\ref{equation_213_defining_II}):
\begin{eqnarray*}
\lambda \cdot q(x) 
  & = &
\int_{0}^{x}\left( q(t)-\frac{1}{\lambda} \cdot\int_{t}^{x} q(s) ds\right) dt
    +
\int_{x}^{1} q(t) dt \\
  & = &
\int_{0}^{1} q(t) dt-\frac{1}{\lambda} \cdot\int_{0}^{x}\int_{t}^{x} q(s) ds dt \\
  & = &
\int_{0}^{1} q(t) dt-\frac{1}{\lambda} \cdot\int_{0}^{x}\int_{0}^{s} q(s) dt ds \\
  & = &
\int_{0}^{1} q(t) dt-\frac{1}{\lambda} \cdot\int_{0}^{x} s \cdot q(s) ds.
\end{eqnarray*}
Hence we have the following integral equation for $q(x)$
\begin{equation}
\lambda^{2}\cdot q(x)
   =
\lambda \cdot \int_{0}^{1} q(t) dt-\int_{0}^{x} s\cdot q(s) ds.
\label{equation_213_integral_equation}
\end{equation}
Differentiating once we have
\begin{equation}
\lambda^{2}\cdot q^{\prime}(x)=-x\cdot q(x).
\label{equation_213_differential_equation}
\end{equation}
The solution to this differential equation is
\begin{equation}
q(x)=C\cdot \exp\left( -\frac{x^{2}}{2\cdot \lambda^{2}}\right) .
\label{equation_213_solution_differential_equation}
\end{equation}
By setting the constant $C$ to be $1$ we obtain a solution to
equation~(\ref{equation_213_differential_equation}).
Now substitute this solution for $q(x)$
into the integral equation~(\ref{equation_213_integral_equation}) and set
$x=0$:
\begin{eqnarray*}
\lambda^{2}
  & = &
\lambda \cdot
  \int_{0}^{1} \exp\left( -\frac{t^{2}}{2\cdot \lambda^{2}}\right) dt \\
  & = &
\sqrt{2} \cdot \lambda^{2} \cdot
\int_{0}^{1/(\sqrt{2} \cdot \lambda)} \exp\left(-u^{2}\right) du \\
  & = &
\frac{\sqrt{\pi}\cdot \lambda^{2}}{\sqrt{2}}
   \cdot
\erf(1/(\sqrt{2} \cdot \lambda)),
\end{eqnarray*}
where the substitution in the integral is
$u=t/(\sqrt{2} \cdot \lambda)$.
Hence the non-zero eigenvalues $\lambda$
satisfy equation~(\ref{equation_213_eigenvalue}).
\end{proof}

\begin{proposition}
All the non-zero eigenvalues of the operator $T$
are simple, that is, there are no
non-zero eigenvalue whose index is greater than or equal to $2$.
\label{proposition_213_simple}
\end{proposition}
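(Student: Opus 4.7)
The plan is to mirror the argument sketched for Proposition~\ref{proposition_123_simple}: suppose for contradiction that a generalized eigenfunction $f$ exists satisfying $\lambda \cdot f = T(f) + \varphi$ for some nonzero eigenvalue $\lambda$ with eigenfunction $\varphi$, reduce the equation to an ordinary differential equation for a single unknown function, and show that the resulting one-parameter family of solutions is incompatible with the boundary conditions.

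First I would split $f(x,y)$ into two pieces, writing $f = r(x,y)$ on $0 \leq x \leq y \leq 1$ and $f = s(x,y)$ on $0 \leq y \leq x \leq 1$. The generalized eigenvalue equation becomes the system
\begin{align*}
\lambda \cdot r(x,y) &= \int_{0}^{x} r(t,x)\, dt + \int_{y}^{1} s(t,x)\, dt + p(x,y), \\
\lambda \cdot s(x,y) &= \int_{0}^{x} r(t,x)\, dt + \int_{x}^{1} s(t,x)\, dt + q(x).
\end{align*}
Just as in the proof for eigenfunctions, the right-hand side of the second equation has no $y$-dependence, so $s(x,y) = s(x)$ depends only on $x$. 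Subtracting the two equations and using the explicit form of $p(x,y)$ from the eigenfunction calculation yields an expression for $r(x,y)$ as an integral involving $s$ and the known function $q$.

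Next I would substitute this expression back into the $s$-equation to obtain a single integral equation for $s(x)$ alone. Differentiating once produces the first-order linear inhomogeneous ODE
$$
\lambda^{2} \cdot s'(x) + x \cdot s(x) = -\frac{2 \cdot x}{\lambda} \cdot q(x) .
$$
Since $q(x) = \exp(-x^{2}/(2 \cdot \lambda^{2}))$ already solves the associated homogeneous equation, variation of parameters gives the one-parameter family $s(x) = \left(C - x^{2}/\lambda^{3}\right) \cdot q(x)$.

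The decisive step, and the one I expect to be the main obstacle, is the boundary analysis. Setting $x=0$ in the reduced integral equation yields $\lambda \cdot s(0) = \int_{0}^{1} s(t)\, dt + 1$. Substituting the explicit form of $s$ and using the eigenvalue relation $\lambda = \int_{0}^{1} q(t)\, dt$, the coefficient of the free constant $C$ cancels. A single integration by parts, exploiting $\lambda^{2} \cdot q'(x) = -x \cdot q(x)$, reduces the residual compatibility condition to $\lambda^{-1} \cdot \exp(-1/(2 \cdot \lambda^{2})) = 0$, which has no solution for any nonzero $\lambda$. This contradicts the existence of the generalized eigenfunction $f$ and establishes that every nonzero eigenvalue of $T$ is simple.
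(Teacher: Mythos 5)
Your proposal follows essentially the same route as the paper: reduce to the case where $s$ depends only on $x$, subtract the two equations to eliminate $r$, obtain the integral equation and then the first-order ODE $\lambda^{2} s'(x) + x s(x) = -(2x/\lambda) q(x)$, solve it, and derive the contradiction $\lambda^{-1}\exp(-1/(2\lambda^{2})) = 0$ from the $x=0$ boundary condition. The only cosmetic difference is that you retain the free constant $C$ and observe it cancels in the compatibility condition, whereas the paper sets $C=0$ at the outset by noting the homogeneous piece corresponds to adding a multiple of $\varphi$ to the generalized eigenfunction; both are correct.
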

\begin{proof}
Assume that $\lambda$ is a non-zero eigenvalue
of index at least $2$,
that is, there is a function $f(x,y)$ such that
$\lambda \cdot f = T(f) + \varphi$.
Letting
$f(x,y) = r(x,y)$ for $0 \leq x \leq y \leq 1$
and 
$f(x,y) = s(x,y)$ for $0 \leq y \leq x \leq 1$
we have 
\begin{eqnarray}
\lambda \cdot r(x,y)
  & = &
\int_{0}^{x} r(t,x) dt + \int_{y}^{1} s(t,x) dt + p(x,y) ,
\label{equation_213_simple_I} \\
\lambda \cdot s(x,y)
  & = &
\int_{0}^{x} r(t,x) dt + \int_{x}^{1} s(t,x) dt + q(x) .
\label{equation_213_simple_II}
\end{eqnarray}
Note that in the right-hand side of
equation~(\ref{equation_213_simple_II})
there is no dependency on the variable $y$.
Hence $s(x,y)$ is a function of $x$ only,
and we write $s(x)$ henceforth.
Subtracting 
equation~(\ref{equation_213_simple_II})
from
equation~(\ref{equation_213_simple_I})
and dividing by $\lambda$ gives
\begin{eqnarray*}
r(x,y)
  & = &
s(x) 
- \frac{1}{\lambda} \cdot \int_{x}^{y} s(t) dt
+ \frac{1}{\lambda} \cdot (p(x,y) - q(x)) \\
  & = &
s(x) 
- \frac{1}{\lambda^{2}} \cdot \int_{x}^{y} (\lambda \cdot s(t) + q(t)) dt,
\end{eqnarray*}
where the last step is by
equation~(\ref{equation_213_p}).
Substituting this expression into
equation~(\ref{equation_213_simple_II}) yields
the integral equation for $s(x)$:
\begin{eqnarray}
\lambda \cdot s(x)
  & = &
\int_{0}^{x} \left(
s(t) 
- \frac{1}{\lambda^{2}} \cdot \int_{t}^{x} (\lambda \cdot s(u) + q(u)) du
\right) dt
 + \int_{x}^{1} s(t) dt + q(x)  \nonumber \\
  & = &
- \frac{1}{\lambda^{2}} \cdot 
\int_{0}^{x} \int_{t}^{x} (\lambda \cdot s(u) + q(u)) du dt
 + \int_{0}^{1} s(t) dt + q(x)  \nonumber \\
  & = &
- \frac{1}{\lambda^{2}} \cdot 
\int_{0}^{x} \int_{0}^{u} (\lambda \cdot s(u) + q(u)) dt du
 + \int_{0}^{1} s(t) dt + q(x)  \nonumber \\
  & = &
- \frac{1}{\lambda^{2}} \cdot 
\int_{0}^{x} u \cdot (\lambda \cdot s(u) + q(u)) du
 + \int_{0}^{1} s(t) dt + q(x)  \nonumber \\
  & = &
- \frac{1}{\lambda} \cdot 
\int_{0}^{x} u \cdot s(u) du
 + \int_{0}^{1} s(t) dt 
 - \frac{1}{\lambda} \cdot \int_{0}^{1} q(t) dt 
+ 2 \cdot q(x) ,
\label{equation_213_simple_integral_equation}
\end{eqnarray}
where the last step is using
equation~(\ref{equation_213_integral_equation}).
Differentiate with respect to $x$ to obtain
the first order differential equation in $s(x)$:
$$
\lambda \cdot s^{\prime}(x)
   = 
 - \frac{1}{\lambda} \cdot x \cdot s(x) 
 + 2 \cdot q^{\prime}(x)  .
$$
The general solution to this differential equation is
$$
s(x)
     =
- \frac{x^{2}}{\lambda^{3}} 
    \cdot
  \exp\left(-\frac{x^{2}}{2 \cdot \lambda^{2}}\right)
+
  C \cdot q(x)  , $$
where we used that
$q(x) = \exp\left(-\frac{x^{2}}{2 \cdot \lambda^{2}}\right)$.
Observe that the homogeneous part of this solution
is expected. It corresponds to the homogeneous
part of the equation $\lambda \cdot f = T(f) + \varphi$.
Hence we may set $C = 0$ without loss of generality.
Setting $x = 0$ in~(\ref{equation_213_simple_integral_equation})
yields
$$
   0
  =
   \int_{0}^{1} s(t) dt 
 - \frac{1}{\lambda} \cdot \int_{0}^{1} q(t) dt 
+ 2 \cdot q(0) .
$$
Using that $q(0) = 1$,
$\int_{0}^{1} q(t) dt = \lambda$
and 
\begin{eqnarray*}
   \int_{0}^{1} s(t) dt
  & = &
     \frac{1}{\lambda}
        \cdot
     \exp\left(-\frac{1}{2 \cdot \lambda^{2}}\right)
  -
     \sqrt{\frac{\pi}{2}}
        \cdot
     \erf\left(\frac{1}{\sqrt{2} \cdot \lambda}\right)  \\
  & = &
     \frac{1}{\lambda}
        \cdot
     \exp\left(-\frac{1}{2 \cdot \lambda^{2}}\right)
  -
     1 ,
\end{eqnarray*}
we obtain the equation
$$  0
      =
   \frac{1}{\lambda}
     \cdot 
   \exp\left(-\frac{1}{2 \cdot \lambda^{2}}\right)  $$
which has no solutions. Hence there is no non-zero
eigenvalue of index greater than $2$, that is, all
non-zero eigenvalues are simple.
\end{proof}

For completeness we state:

\begin{lemma}
A function in the kernel of the operator $T$ has the form
$$
\varphi(x,y)
 =
\left\{ \begin{array}{c l}
           p(x,y) & \text{if }0 \leq x \leq y \leq 1, \\
           0      & \text{if }0 \leq y \leq x \leq 1,
        \end{array} \right. $$
where $p(x,y)$ satisfies $\int_{0}^{x} p(t,x) dt = 0$.
\end{lemma}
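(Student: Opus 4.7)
My plan is to read off the kernel directly from the explicit form of $T$ already computed in this section. Writing $f(x,y)=p(x,y)$ on the upper triangle $\{0\le x\le y\le 1\}$ and $f(x,y)=q(x,y)$ on the lower triangle $\{0\le y\le x\le 1\}$, the equation $Tf=0$ splits (just as in the derivation of the eigenvalue system) into
\begin{eqnarray*}
\int_{0}^{x} p(t,x)\,dt + \int_{y}^{1} q(t,x)\,dt & = & 0, \qquad 0\le x\le y\le 1, \\
\int_{0}^{x} p(t,x)\,dt + \int_{x}^{1} q(t,x)\,dt & = & 0, \qquad 0\le y\le x\le 1.
\end{eqnarray*}
The key observation is that in the first identity the variable $y$ only appears in the single integral $\int_{y}^{1} q(t,x)\,dt$, and in the second identity $y$ does not appear at all.

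From the first identity I would argue that, for almost every $x$, the map $y\mapsto\int_{y}^{1} q(t,x)\,dt$ must be constant on $[x,1]$. Since $q(\cdot,x)\in L^{2}$ (for a.e.\ $x$, by Fubini applied to $f\in L^{2}([0,1]^{2})$), this integral is an absolutely continuous function of $y$, so being equal a.e.\ to a constant it is equal everywhere to that constant. Evaluating at $y=1$ gives the constant value $0$, hence
\[
\int_{0}^{x} p(t,x)\,dt=0 \qquad\text{for a.e.\ }x,
\]
and also $\int_{y}^{1} q(t,x)\,dt=0$ for every $y\in[x,1]$. Differentiating in $y$ (which is legitimate a.e.\ by the Lebesgue differentiation theorem applied to the absolutely continuous indefinite integral) yields $q(y,x)=0$ for a.e.\ $y\ge x$, i.e.\ $q\equiv 0$ as an element of $L^{2}$ on the lower triangle.

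With $q\equiv 0$ in hand, the second identity reduces to $\int_{0}^{x} p(t,x)\,dt=0$, which we have already obtained. Thus every $f\in\ker(T)$ has the form stated in the lemma, and conversely any $f$ of that form clearly satisfies $Tf=0$ by inspection of the explicit formula for $T$. The only technical point, and the place where a little care is required, is justifying the a.e.\ identities hold pointwise in $y$ for fixed $x$; this is handled by the absolute continuity of indefinite integrals of $L^{2}$ functions together with Fubini, so no genuine obstacle arises.
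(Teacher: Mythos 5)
The paper states this lemma without proof (the preceding sentence is ``For completeness we state:''), so there is no argument in the source to compare against. Your proof is correct and supplies exactly the missing argument: fixing $x$ and using the $y$-independence of $\int_0^x p(t,x)\,dt$ on the upper triangle forces $\int_y^1 q(t,x)\,dt$ to be constant in $y$; evaluating at $y=1$ pins the constant to $0$, yielding both $\int_0^x p(t,x)\,dt=0$ and, upon differentiating the absolutely continuous indefinite integral, $q\equiv 0$ on the lower triangle. The lower-triangle equation is then automatic, and the converse inclusion is immediate from the explicit formula for $T$. The measure-theoretic care you take (Fubini to get $q(\cdot,x)\in L^1$ for a.e.\ $x$, absolute continuity of the indefinite integral to upgrade an a.e.\ identity in $y$ to a pointwise one, Lebesgue differentiation) is exactly what is needed to make the ``read off the kernel'' heuristic rigorous in $L^2$; no gap.
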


The adjoint operator $T^{*}$ is given by
$$ T^{*}(f(x,y))
      =
   \left\{ \begin{array}{c l}
             \int_{0}^{y} q(y,u) du+\int_{y}^{1} p(y,u) du &
             \text{if }0 \leq x \leq y \leq 1, \\[2 mm]
             \int_{0}^{y} q(y,u) du+\int_{y}^{x} p(y,u) du &
             \text{if }0 \leq y \leq x \leq 1.
           \end{array} \right. $$

\begin{proposition}
For a non-zero eigenvalues $\lambda$ of the operator $T$ the corresponding
eigenfunction of the adjoint operator $T^{*}$ is
$$ \psi(x,y)
     =
\left\{ \begin{array}{c l}
           p^{*}(y) &
           \text{if} 0 \leq x \leq y \leq 1, \\[2 mm]
           p^{*}(y)-\frac{1}{\lambda}\cdot \int_{x}^{1} p^{*}(u) du &
           \text{if} 0 \leq y \leq x \leq 1,
        \end{array} \right. $$
where
\begin{equation}
p^{*}(y)=-2\cdot y \cdot \exp\left( \frac{y^{2}}{2 \cdot \lambda^{2}}\right)
+2\cdot \lambda +\sqrt{2 \cdot \pi}\cdot y \cdot \exp\left( \frac{y^{2}}{2 \cdot \lambda^{2}}
\right) \cdot \erf\left( \frac{y}{\sqrt{2}\lambda}\right) .
\label{equation_213_adjoint_p}
\end{equation}
\end{proposition}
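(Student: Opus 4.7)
The plan is to substitute the ansatz $\psi(x,y) = p^{*}(x,y)$ on $\{0\le x \le y \le 1\}$ and $\psi(x,y) = q^{*}(x,y)$ on $\{0 \le y \le x \le 1\}$ into the eigenvalue equation $T^{*}\psi = \lambda\psi$, using the explicit form of $T^{*}$ stated immediately above the proposition. The upper-triangle equation
$$\lambda\, p^{*}(x,y) = \int_0^y q^{*}(y,u)\, du + \int_y^1 p^{*}(y,u)\, du$$
has right-hand side independent of $x$, forcing $p^{*}(x,y)$ to be a function of $y$ alone; I write $p^{*} = p^{*}(y)$ and note that $p^{*}(y,u) = p^{*}(u)$ henceforth.

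On the lower triangle the equation reads $\lambda q^{*}(x,y) = \int_0^y q^{*}(y,u)\,du + \int_y^x p^{*}(u)\,du$. Differentiating in $x$ gives $\lambda\, \partial_x q^{*}(x,y) = p^{*}(x)$, and evaluating the same lower equation at $x = y$ yields $\lambda q^{*}(y,y) = \int_0^y q^{*}(y,u)\,du$. Combining with the upper equation gives $q^{*}(y,y) = p^{*}(y) - \lambda^{-1}\int_y^1 p^{*}(u)\,du$, and integrating the $x$-derivative of $q^{*}$ from $y$ to $x$ then produces the stated closed form $q^{*}(x,y) = p^{*}(y) - \lambda^{-1}\int_x^1 p^{*}(u)\,du$. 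Substituting this back into the upper equation collapses the double integral and yields the scalar integral equation
$$\lambda p^{*}(y) = I - \frac{y}{\lambda}\int_y^1 p^{*}(v)\,dv, \qquad I := \int_0^1 p^{*}(u)\,du.$$

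Differentiating once eliminates $I$ and produces the first-order linear ODE $\lambda^2 y\, (p^{*})'(y) = (\lambda^2 + y^2)\, p^{*}(y) - \lambda I$. The integrating factor $\mu(y) = y^{-1} \exp(-y^2/(2\lambda^2))$ converts this into a total derivative, and an integration by parts against $t^{-2}\exp(-t^2/(2\lambda^2))\,dt$ introduces the error function; the general solution reads
$$p^{*}(y) = \frac{I}{\lambda} + \frac{I\sqrt{\pi/2}}{\lambda^2}\, y\, \exp(y^2/(2\lambda^2))\, \erf(y/(\sqrt{2}\lambda)) + C\, y\, \exp(y^2/(2\lambda^2)).$$
Evaluating the scalar integral equation at $y = 0$ forces $I = 2\lambda^2$, while evaluating it at $y = 1$ yields $C = -\sqrt{2\pi}\,\erf(1/(\sqrt{2}\lambda))$; the eigenvalue equation~(\ref{equation_213_eigenvalue}) then reduces $C$ to $-2$, recovering formula~(\ref{equation_213_adjoint_p}) exactly.

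The main obstacle is the closing loop of self-consistency: the quantity $I$ was introduced as $\int_0^1 p^{*}$, so one must verify that the explicit $p^{*}$ just obtained does integrate to $2\lambda^2$. Computing this integral termwise (handling the $\erf$ term by parts via $y\exp(y^2/(2\lambda^2))\,dy = \lambda^2\, d\exp(y^2/(2\lambda^2))$) gives $\int_0^1 p^{*}(y)\,dy = 2\lambda^2 + \lambda^2 \exp(1/(2\lambda^2))\bigl[\sqrt{2\pi}\,\erf(1/(\sqrt{2}\lambda)) - 2\bigr]$, and the bracketed factor vanishes precisely by the eigenvalue equation, closing the argument.
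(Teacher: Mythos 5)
Your overall scheme matches the paper's: same defining relations, same observation that $p^{*}$ depends only on $y$, same subtraction to get $q^{*}(x,y) = p^{*}(y) - \lambda^{-1}\int_x^1 p^{*}(u)\,du$, and the same scalar integral equation
$\lambda^2 p^{*}(y) = \lambda I - y\int_y^1 p^{*}(v)\,dv$ with $I = \int_0^1 p^{*}$. Where you diverge is in the ODE reduction: the paper differentiates twice to get the second-order equation $\lambda^{2}p^{*\prime\prime} = y\,p^{*\prime} + 2p^{*}$ with a two-dimensional solution space, and then imposes the single condition $p^{*}(0) = -\lambda p^{*\prime}(0)$ (coming from the two $y=0$ evaluations) to cut it to one dimension, setting $C_{2}=1$ as the normalization. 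You instead substitute the integral equation back into its own first derivative to get a first-order equation $\lambda^2 y\, p^{*\prime}(y) = (\lambda^2 + y^2)p^{*}(y) - \lambda I$ with the two parameters $I$ and $C$, and then close the loop with the self-consistency check $\int_0^1 p^{*} = I$, which the paper silently omits. Both routes are valid and give the same answer; your self-consistency verification at the end is a nice addition and is the step that makes the argument genuinely airtight.

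There is one logical slip you should fix: you write that ``evaluating the scalar integral equation at $y = 0$ forces $I = 2\lambda^2$.'' It does not. Plugging $y=0$ into the integral equation gives $\lambda^2 p^{*}(0) = \lambda I$, and your general solution already has $p^{*}(0) = I/\lambda$ built in, so the $y=0$ evaluation is a tautology and imposes nothing. The genuine constraint is the one you extract at $y=1$: since your first-order ODE was obtained by substituting the integral equation into its own derivative, an arbitrary solution of that ODE satisfies the \emph{derivative} of the integral equation but may miss the integral equation itself by a constant; evaluating at $y = 1$ (where $\int_y^1 p^{*}$ vanishes) pins that constant down and gives $C = -(I\sqrt{\pi/2}/\lambda^{2})\erf(1/(\sqrt{2}\lambda))$. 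After that, $I$ is a free normalization parameter (as it must be, since eigenfunctions are determined only up to scale), and setting $I = 2\lambda^{2}$ is simply the normalization choice that matches the paper's $C_{2}=1$ and makes $C = -2$ via the eigenvalue equation. With that correction the argument is sound, and your termwise evaluation of $\int_0^1 p^{*}$ correctly confirms self-consistency: the discrepancy $\lambda^2 e^{1/(2\lambda^2)}\bigl(\sqrt{2\pi}\,\erf(1/(\sqrt{2}\lambda)) - 2\bigr)$ vanishes exactly by equation~(\ref{equation_213_eigenvalue}).
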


\begin{proof}
The defining relations for the eigenfunctions are
\begin{eqnarray}
\lambda \cdot p^{*}(x,y)
  & = &
\int_{0}^{y} q^{*}(y,u) du + \int_{y}^{1} p^{*} (y,u) du ,
\label{equation_213_adjoint_defining_I} \\
\lambda \cdot q^{*}(x,y)
  & = &
\int_{0}^{y} q^{*}(y,u) du + \int_{y}^{x} p^{*}(y,u) du .
\label{equation_213_adjoint_defining_II}
\end{eqnarray}
Observe that there is no dependency on the variable $x$ in
equation~(\ref{equation_213_adjoint_defining_I}).
Thus we write $p^{*}(x,y)=p^{*}(y)$.
Subtracting these two equations we have
$$
\lambda \cdot(q^{*}(x,y)-p^{*}(y))=-\int_{x}^{1} p^{*}(u) du,
$$
such that
\begin{equation}
q^{*}(x,y)=p^{*}(y)-\frac{1}{\lambda}\cdot \int_{x}^{1} p^{*}(u) du.
\label{equation_213_adjoint_q}
\end{equation}
Substituting this expression into
equation~(\ref{equation_213_adjoint_defining_I}) one obtains
\begin{eqnarray*}
\lambda \cdot p^{*}(y) 
  & = &
     \int_{0}^{y} 
         \left( p^{*}(u)
               -
              \frac{1}{\lambda} \cdot
              \int_{y}^{1} p^{*}(v) dv
         \right) du
   + \int_{y}^{1} p^{*}(u) du \\
  & = &
\int_{0}^{1} p^{*}(u) du-\frac{1}{\lambda}\cdot \int_{0}^{y}
\int_{y}^{1} p^{*}(v) dv du \\
  & =&
\int_{0}^{1} p^{*}(u) du
 - \frac{1}{\lambda} \cdot y \cdot \int_{y}^{1} p^{*}(v) dv.
\end{eqnarray*}
That is, $p^{*}(y)$ satisfies the integral equation
\begin{equation}
\lambda^{2}\cdot p^{*}(y)=\lambda \cdot \int_{0}^{1} p^{*}(u) du-y \cdot
\int_{y}^{1} p^{*}(v) dv.
\label{equation_213_adjoint_integral_equation}
\end{equation}
Differentiating this equation twice we obtain
\begin{eqnarray}
\lambda^{2} \cdot {p^{*}}^{\prime}(y)
  & = &
y \cdot p^{*}(y)-\int_{y}^{1} p^{*}(v) dv,
\label{equation_213_between_equation} \\
\lambda^{2} \cdot {p^{*}}^{\prime\prime}(y)
  & = &
y \cdot {p^{*}}^{\prime}(y) + 2 \cdot p^{*}(y).
\label{equation_213_adjoint_differential_equation}
\end{eqnarray}
The solution of this differential equation is given by
\begin{equation}
p^{*}(y)
  =
C_{1} \cdot y \cdot \exp\left( \frac{y^{2}}{2 \cdot \lambda^{2}}\right)
+
C_{2} \cdot
\left[ 2 \cdot \lambda +
       \sqrt{2 \cdot \pi} \cdot y \cdot
       \exp\left( \frac{y^{2}}{2 \cdot \lambda^{2}}\right) \cdot
       \erf\left( \frac {y}{\sqrt{2} \cdot \lambda} \right)
\right] .
\label{equation_213_adjoint_solution_differential_equation}
\end{equation}
Setting $y=0$ in equations~(\ref{equation_213_adjoint_integral_equation})
and~(\ref{equation_213_between_equation}) we obtain
$\lambda \cdot p^{*}(0)
    =
 \int_{0}^{1} p^{*}(u) du
    =
 -\lambda^{2} \cdot p^{\prime}(0)$.
Inserting
this condition into the solution of the differential
equation~(\ref{equation_213_adjoint_solution_differential_equation})
we obtain $C_{1}=-2\cdot C_{2}$.
Moreover setting $C_{2}=1$ we obtain equation~(\ref{equation_213_adjoint_p}).
\end{proof}

\begin{lemma}
A function in the kernel of the adjoint operator $T^{*}$ has the form
$$ \psi(x,y)
  =
   \left\{ \begin{array}{c l}
             0          & \text{if }0 \leq x \leq y \leq 1, \\
             q^{*}(x,y) & \text{if }0 \leq y \leq x \leq 1
           \end{array} \right. $$
where $q^{*}(x,y)$ satisfies $\int_{0}^{y} q^{*}(y,u) du = 0$.
\end{lemma}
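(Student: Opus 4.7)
The plan is to apply the defining equation $T^{*}\psi = 0$ region by region, using the explicit formula given just above the lemma statement. Splitting $\psi$ as $\psi(x,y) = p(x,y)$ on $\{0 \leq x \leq y \leq 1\}$ and $\psi(x,y) = q^{*}(x,y)$ on $\{0 \leq y \leq x \leq 1\}$, the equation $T^{*}\psi = 0$ yields two scalar conditions:
\begin{eqnarray*}
\int_{0}^{y} q^{*}(y,u)\, du + \int_{y}^{1} p(y,u)\, du & = & 0
   \quad \text{on } 0 \leq x \leq y \leq 1, \\
\int_{0}^{y} q^{*}(y,u)\, du + \int_{y}^{x} p(y,u)\, du & = & 0
   \quad \text{on } 0 \leq y \leq x \leq 1.
\end{eqnarray*}

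The first equation has no dependence on $x$, so it is simply a constraint on $y$. The key observation is that the second equation does depend on $x$, through the upper limit of the inner integral.

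First I would differentiate the second equation with respect to $x$ on the region $y \leq x$, which gives $p(y,x) = 0$ for all $0 \leq y \leq x \leq 1$. Since $p$ was defined on exactly the region $\{(a,b) : 0 \leq a \leq b \leq 1\}$, this relation (after relabeling the variables) forces $p \equiv 0$ on its entire domain. In other words, $\psi$ vanishes on the region $0 \leq x \leq y \leq 1$, as claimed.

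Once $p \equiv 0$, both defining equations collapse to the single condition $\int_{0}^{y} q^{*}(y,u)\, du = 0$, which is precisely the stated constraint on $q^{*}$. Conversely, any $\psi$ of the prescribed form satisfies $T^{*}\psi = 0$ by direct substitution into the formula for $T^{*}$, so the characterization is exact. I do not expect any real obstacle here: the whole argument is parallel to the characterization of $\ker T$ given earlier, with the roles of the two triangular regions swapped because of how the adjoint integrates against the last variable $u$ rather than the first.
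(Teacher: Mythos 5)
Your proof is correct, and the paper in fact states this lemma without proof (as with the companion characterization of $\ker T$), so there is no argument in the paper to compare against. The route you take is the natural one: on the lower triangle $\{y \leq x\}$ the equation $\int_{0}^{y} q^{*}(y,u)\,du + \int_{y}^{x} p(y,u)\,du = 0$ has an $x$-dependence only through the upper limit of the second integral, so that integral must be independent of $x$, forcing $p(y,\cdot) = 0$ almost everywhere on $[y,1]$; as $(y,x)$ sweeps $\{y \leq x\}$ the pairs $(y,u)$ with $y \leq u \leq x$ exhaust the domain of $p$, so $p \equiv 0$. Both equations then reduce to $\int_{0}^{y} q^{*}(y,u)\,du = 0$, and the converse is immediate by substitution. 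The only thing worth flagging is that "differentiate with respect to $x$" should be read in the $L^{2}$/a.e.\ sense (constancy of $x \mapsto \int_{y}^{x} p(y,u)\,du$ implies $p(y,\cdot)=0$ a.e.), which is the same implicit convention used throughout the paper's computations. Your observation that this is the mirror image of the $\ker T$ lemma, with the triangles swapped because $T^{*}$ integrates in the last slot, is also accurate.
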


\begin{proposition}
For a non-zero eigenvalue $\lambda$ with eigenfunction $\varphi$ and 
eigenfunction $\psi$ of the adjoint operator $T^{*}$, we have
\begin{eqnarray*}
\pair{\varphi}{\mathbf{1}}          & = & \lambda^{2} , \\
\pair{\mathbf{1}}{\overline{\psi}}  & = & 2 \cdot \lambda^{3} , \\
\pair{\varphi}{\overline{\psi}}     & = &
         2 \cdot \lambda^{2} \cdot \exp(- 1 / (2 \cdot \lambda^{2})) .
\end{eqnarray*}
In particular,
$$
\frac{\pair{\varphi}{\mathbf{1}} \cdot
      \pair{\mathbf{1}}{\overline{\psi}}}
     {\pair{\varphi}{\overline{\psi}}}
=
  \lambda^{3} \cdot \exp(1 / (2 \cdot \lambda^{2})) .
$$
\label{proposition_213_calculations}
\end{proposition}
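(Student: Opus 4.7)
The plan is to compute each of the three inner products by splitting $[0,1]^2$ into the two triangular regions $\{x\le y\}$ and $\{y\le x\}$ where $\varphi$ and $\psi$ are given by different closed-form expressions, and then using the integral and differential equations already derived for $q$ and $p^{*}$ (rather than trying to brute-force the nested integrals with their explicit forms involving $\exp$ and $\erf$). Two identities will be pivotal and I will establish them at the outset: $\int_{0}^{1}q(t)\,dt=\lambda$ from equation~(\ref{equation_213_eigenvalue}), and $\int_{0}^{1}p^{*}(u)\,du=2\lambda^{2}$, which follows by setting $y=0$ in~(\ref{equation_213_adjoint_integral_equation}) together with $p^{*}(0)=2\lambda$.

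For $\pair{\varphi}{\mathbf{1}}$, I would split the integral and apply Fubini to the nested integral to obtain
\[
\pair{\varphi}{\mathbf{1}}
  =\int_{0}^{1}q(x)\,dx-\frac{1}{\lambda}\int_{0}^{1}t(1-t)\,q(t)\,dt .
\]
The remaining moments $\int_{0}^{1}t\,q(t)\,dt$ and $\int_{0}^{1}t^{2}\,q(t)\,dt$ collapse using the differential equation $\lambda^{2}q'(x)=-x\,q(x)$ from~(\ref{equation_213_differential_equation}) via a single integration by parts, and after simplification $\pair{\varphi}{\mathbf{1}}=\lambda^{2}$. For $\pair{\mathbf{1}}{\overline{\psi}}$, an analogous split produces
\[
\pair{\mathbf{1}}{\overline{\psi}}
 =\int_{0}^{1}p^{*}(y)\,dy-\frac{1}{2\lambda}\int_{0}^{1}y^{2}\,p^{*}(y)\,dy ,
\]
and the second moment of $p^{*}$ is obtained by multiplying the integral equation~(\ref{equation_213_adjoint_integral_equation}) by $1$, integrating over $y\in[0,1]$, and swapping the order in the $y\int_{y}^{1}p^{*}(v)\,dv$ term; this yields $\int_{0}^{1}y^{2}p^{*}(y)\,dy=4\lambda^{3}(1-\lambda)$, and the sum evaluates to $2\lambda^{3}$.

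The main obstacle is $\pair{\varphi}{\overline{\psi}}$, which splits into four integrals $I_{1},I_{2},I_{3},I_{4}$ over the two triangles. The two ``diagonal'' terms combine cleanly: $I_{1}+I_{3}=\int_{0}^{1}q(x)\,dx\cdot\int_{0}^{1}p^{*}(y)\,dy=2\lambda^{3}$, because together they cover the full unit square. For $I_{2}$ (involving $\int_{x}^{y}q(t)\,dt$) I would swap the order of $x$ and $t$ to reduce to $\int_{0}^{y}tq(t)\,dt=\lambda^{2}(1-q(y))$ via the differential equation; for $I_{4}$ (involving $\int_{x}^{1}p^{*}(u)\,du$) I would use the identity $y\int_{y}^{1}p^{*}(v)\,dv=2\lambda^{3}-\lambda^{2}p^{*}(y)$ extracted directly from~(\ref{equation_213_adjoint_integral_equation}). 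Both $I_{2}$ and $I_{4}$ then reduce to $\int_{0}^{1}q(x)p^{*}(x)\,dx$, so the whole calculation boils down to evaluating this one pairing.

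The final step, and the main technical obstacle, is evaluating $\int_{0}^{1}q(x)p^{*}(x)\,dx$. Multiplying out, the products $q(x)\cdot x\exp(x^{2}/(2\lambda^{2}))$ simplify to $x$, so everything reduces to $\int_{0}^{1}x\,\erf(x/(\sqrt{2}\lambda))\,dx$. Integration by parts with $u=\erf(x/(\sqrt{2}\lambda))$, $dv=x\,dx$ produces the boundary value $\erf(1/(\sqrt{2}\lambda))$, which is exactly $\sqrt{2/\pi}$ by the eigenvalue equation~(\ref{equation_213_eigenvalue}), together with a residual moment $\int_{0}^{1}x^{2}q(x)\,dx$ already computed above. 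After simplification I expect $\int_{0}^{1}q(x)p^{*}(x)\,dx=\lambda^{2}+\lambda\exp(-1/(2\lambda^{2}))$, and combining everything gives $\pair{\varphi}{\overline{\psi}}=-2\lambda^{3}+2\lambda\cdot(\lambda^{2}+\lambda\exp(-1/(2\lambda^{2})))=2\lambda^{2}\exp(-1/(2\lambda^{2}))$. The final ``in particular'' equality follows by direct substitution.
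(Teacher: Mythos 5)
Your proposal is correct and follows essentially the same route as the paper: split each inner product over the two triangular regions, swap the order of integration in the nested integrals, and reduce via the defining differential and integral equations for $q$ and $p^{*}$; all of your intermediate identities (e.g., $\int_{0}^{1}y^{2}p^{*}(y)\,dy=4\lambda^{3}(1-\lambda)$, $\int_{0}^{1}q(x)p^{*}(x)\,dx=\lambda^{2}+\lambda\exp(-1/(2\lambda^{2}))$) check out. The one place you genuinely diverge is $\pair{\mathbf{1}}{\overline{\psi}}$: you extract the second moment of $p^{*}$ directly from the integral equation~(\ref{equation_213_adjoint_integral_equation}) by integrating over $[0,1]$ and swapping order, whereas the paper substitutes the closed form~(\ref{equation_213_adjoint_p}) for $p^{*}$ and carries out the resulting $\exp$-- and $\erf$--integrals by hand; your moment-based shortcut is a modest but real simplification that avoids one of the two explicit special-function integrations (you still need the explicit $\erf$ integration-by-parts for $\int_{0}^{1}qp^{*}\,dx$, as in the paper).
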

\begin{proof}
In the calculations that follows we will use the relations
$\erf(1/(\sqrt{2} \cdot \lambda)) = \sqrt{2/\pi}$,
$\int_{0}^{1} q(x) dx = \lambda$
and $\int_{0}^{1} p^{*}(y) dy = 2 \cdot \lambda^{2}$.

First the inner product between the eigenfunction
and the constant function $\mathbf{1}$:
\begin{eqnarray*}
\pair{\varphi}{\mathbf{1}}
  & = &
\int_{[0,1]^{2}} q(x) dx dy -
\frac{1}{\lambda} \cdot
\int_{0 \leq x \leq y \leq 1} \int_{x}^{y} q(t) dt dx dy \\
  & = &
\int_{0}^{1} q(x) dx - \frac{1}{\lambda} \cdot \int_{0}^{1} t \cdot(1-t) \cdot
q(t) dt \\
  & = &
\lambda- \lambda + \frac{\sqrt{\pi}}{\sqrt{2}} \cdot \lambda^{2} \cdot
\erf(1/(\sqrt{2} \cdot \lambda)) \\
  & = &
\lambda^{2} .
\end{eqnarray*}
Second, the inner product between the adjoint eigenfunction and the constant
function $\mathbf{1}$. We have
\begin{eqnarray*}
\pair{\mathbf{1}}{\overline{\psi}}
   & = & 
\int_{[0,1]^{2}} p^{*}(y) dx dy -
\frac{1}{\lambda} \cdot
\int_{0 \leq y \leq x \leq 1} \int_{x}^{1} p^{*}(u) du dx dy \\
  & = &
\int_{0}^{1} p^{*}(y) dy - \frac{1}{\lambda} \cdot \int_{0}^{1} \frac{u^{2}}{2}
\cdot p^{*}(u) du \\
  & = &
\int_{0}^{1} \left( 1 - \frac{y^{2}}{2 \cdot \lambda} \right) \cdot
p^{*}(y) dy \\
  & = &
\int_{0}^{1} \left( 1 - \frac{y^{2}}{2 \cdot \lambda} \right) \cdot \left(
- 2 \cdot y \cdot \exp\left( \frac{y^{2}}{2 \cdot \lambda^{2}} \right) + 2
\cdot \lambda\right) dy \\
  & + &
\int_{0}^{1} \left( 1 - \frac{y^{2}}{2 \cdot \lambda} \right) \cdot \sqrt{2 \cdot \pi}
\cdot y \cdot \exp\left( \frac{y^{2}}{2 \cdot \lambda^{2}} \right) \cdot
\erf\left( \frac{y}{\sqrt{2} \cdot \lambda} \right) dy .
\end{eqnarray*}
The first integral is given by
$I_{1} = (\lambda - 2 \cdot \lambda^{2} - 2 \cdot \lambda^{3})
           \cdot
         \exp(1/(2 \cdot \lambda^{2}))
         - 1/3 
         + 2 \cdot (\lambda + \lambda^{2} + \lambda^{3})$.
The second integral we solve by integration by parts letting
$f^{\prime} = 
 \left( 1 - \frac{y^{2}}{2 \cdot \lambda} \right) 
   \cdot
  \sqrt{2 \cdot \pi} \cdot y \cdot \exp\left( \frac{y^{2}}{2 \cdot \lambda^{2}} \right)$
and
$g = \erf\left( \frac{y}{\sqrt{2} \cdot \lambda} \right)$.
Then $\int_{0}^{1}
f^{\prime} g dy = [f g]_{0}^{1} - \int_{0}^{1} f g^{\prime} dy$
is given by:
\begin{eqnarray*}
I_{2}
  & = &
\left[ \frac{\sqrt{\pi}}{\sqrt{2}} \cdot \lambda \cdot
(2 \cdot \lambda + 2 \cdot \lambda^{2} - y^{2} )
 \cdot
\exp(y^{2}/(2 \cdot \lambda^{2})) \cdot
\erf( y/(\sqrt{2} \cdot \lambda) ) \right] _{0}^{1} \\
  & - &
\int_{0}^{1} (2 \cdot \lambda + 2 \cdot \lambda^{2} - y^{2} ) dy .
\end{eqnarray*}
Combining all the terms in the sum $I_{1} + I_{2}$ we obtain
$2 \cdot \lambda^{3}$.
The third inner product is given by
\begin{eqnarray*}
\pair{\varphi}{\overline{\psi}}
  & = &
\int_{[0,1]^{2}} q(x) \cdot p^{*}(y) dx dy \\
  & - &
\frac{1}{\lambda} \cdot
\int_{0 \leq x \leq y \leq 1} \int_{x}^{y} q(t) dt \cdot p^{*}(y) dx dy
-
\frac{1}{\lambda} \cdot
\int_{0 \leq y \leq x \leq 1} q(x) \cdot \int_{x}^{1} p^{*}(u) du dx dy \\
  & = &
\left( \int_{0}^{1} q(x) dx \right)
 \cdot
\left( \int_{0}^{1} p^{*}(y) dy \right)
   -
\frac{2}{\lambda}
     \cdot
\int_{0 \leq t \leq y \leq 1} t \cdot q(t) \cdot p^{*}(y) dt dy \\
  & = &
2 \cdot \lambda^{3}
   -
2 \cdot \lambda
    \cdot
\int_{0}^{1}
    \left( 1 - \exp(-y^{2}/(2 \cdot \lambda^{2}) ) \right)
  \cdot
     p^{*}(y) dy \\
  & = &
- 2 \cdot \lambda^{3}
+ 2 \cdot \lambda \cdot
\int_{0}^{1} \exp(- y^{2}/(2 \cdot \lambda^{2}) ) \cdot p^{*}(y) dy \\
  & = &
- 2 \cdot \lambda^{3}
+ 4 \cdot \lambda \cdot \int_{0}^{1}
\left(
   - y
   + \exp( -y^{2}/(2 \cdot \lambda^{2}) )
   + \sqrt{\pi}/\sqrt{2} \cdot y \cdot
     \erf(y/(\sqrt{2} \cdot \lambda)) \right) dy \\
  & = &
2 \cdot \lambda^{2} \cdot \exp(- 1 / (2 \cdot \lambda^{2})) .
\end{eqnarray*}
\end{proof}

\subsection{Asymptotics}

We know that the largest root $\lambda_{0}$
of the eigenvalue
equation~(\ref{equation_213_eigenvalue})
is real, positive and simple,
since the associated operator $T$ is
positivity improving.
However, to say a bit more about the eigenvalues
consider the related equation
$\erf(z)=\sqrt{2/\pi}$.

Since the error function is an increasing function on the real axis, the
equation $\erf(z)=\sqrt{2/\pi}$ has a unique real root.
The error function is an odd function 
hence
we know by the strong version of the little
Picard theorem that the equation $\erf(z)=\sqrt{2/\pi}$ has
infinitely many roots.
Moreover, the complex roots appear in conjugate pairs.
To summarize this discussion we have:
The eigenvalue equation has a unique real root
which is positive and is the largest root.
The remaining infinitely many roots are all
complex and appear in conjugate pairs.

Numerically, we can approximate the roots
of the eigenvalue equation~(\ref{equation_213_eigenvalue}).
The unique real root is $\lambda_{0}=0.7839769312\ldots$.
The next four largest roots are:
\begin{eqnarray*}
\lambda_{1,2} & =0.2141426360\ldots\pm0.2085807022\ldots\cdot i \\
\lambda_{3,4} & =-0.1677323922\ldots\pm0.2418627350\ldots\cdot i
\end{eqnarray*}
Furthermore,
Proposition~\ref{proposition_213_simple}
states that all eigenvalues are simple.
Hence from Proposition~\ref{proposition_213_calculations}
we conclude:
\begin{theorem}
\label{theorem_213}
The number of $213$-avoiding permutations satisfies
$$
\frac{\alpha_{n}(213)}{n!}
   =
\exp\left(\frac{1}{2 \cdot \lambda_{0}^{2}}\right) \cdot \lambda_{0}^{n+1}
 +
O\left( | \lambda_{1} |^{n}\right)
$$
where $\lambda_{0}=0.7839769312\ldots$
is the unique real root of the equation
$\erf\left(1/(\sqrt{2} \cdot \lambda)\right)  =  \sqrt{{2}/{\pi}}$,
and $\lambda_{1}$ is the next largest root
and its modulus is given by
$|\lambda_{1}| = 0.298936411\ldots$.
\end{theorem}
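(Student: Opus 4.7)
The plan is to obtain the theorem as a direct assembly of the spectral expansion in Theorem~\ref{theorem_expansion} with the detailed eigenvalue and inner-product calculations of this section. Since the forbidden set $S=\{213\}$ consists of a single indecomposable permutation different from $(m+1)\cdots 21$, Theorem~\ref{theorem_indecomposable_permutations} guarantees that $T$ is positivity improving, so $r(T)$ is a simple positive real eigenvalue strictly larger in modulus than all others. This eigenvalue must be a real positive root of the equation $\erf(1/(\sqrt{2}\cdot\lambda)) = \sqrt{2/\pi}$; as discussed just after Proposition~\ref{proposition_213_calculations}, monotonicity of $\erf$ on the real axis shows this root is unique, namely $\lambda_{0}$.

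The next step is to identify the ingredients needed to apply the expansion~(\ref{equation_expansion_II}) with $k=1$. Proposition~\ref{proposition_213_simple} says every non-zero eigenvalue is simple, so in particular $\lambda_{0}$ is simple and the adjoint $T^{*}$ has a corresponding eigenfunction $\psi$. Proposition~\ref{proposition_213_calculations} supplies exactly the coefficient appearing in the expansion:
\[
\frac{\pair{\varphi}{\mathbf{1}}\cdot \pair{\mathbf{1}}{\overline{\psi}}}
     {\pair{\varphi}{\overline{\psi}}}
 =
\lambda_{0}^{3}\cdot\exp\!\left(\frac{1}{2\cdot\lambda_{0}^{2}}\right).
\]
It remains to verify that $\lambda_{1}$ is genuinely the second-largest root in modulus. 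The exponent $m=2$ here, and numerical inspection of the roots listed in the discussion preceding the theorem shows $|\lambda_{1}|=|\lambda_{2}|=0.298\ldots$ strictly below $\lambda_{0}=0.783\ldots$ and strictly above $|\lambda_{3,4}|$; one then picks $r$ slightly below $|\lambda_{1}|$ and absorbs all remaining terms into $O(r^{n}) = O(|\lambda_{1}|^{n})$ via the argument leading to~(\ref{equation_expansion_II}).

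Plugging the above data into~(\ref{equation_expansion_II}) with $n$ replaced by $n-m=n-2$, the sole surviving term is
\[
\lambda_{0}^{3}\cdot\exp\!\left(\frac{1}{2\cdot\lambda_{0}^{2}}\right)\cdot \lambda_{0}^{\,n-2}
=
\exp\!\left(\frac{1}{2\cdot\lambda_{0}^{2}}\right)\cdot \lambda_{0}^{\,n+1},
\]
which is exactly the claimed leading term. The main obstacle is really conceptual rather than computational: one must be sure that the positivity-improving property has already pinned $\lambda_{0}$ as the unique top eigenvalue, since the eigenvalue equation has infinitely many complex roots and Theorem~\ref{theorem_expansion} only yields a useful expansion once one can separate the peripheral part of the spectrum from the rest. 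Fortunately Theorem~\ref{theorem_indecomposable_permutations} together with Proposition~\ref{proposition_213_simple} does this separation, leaving only the essentially bookkeeping arithmetic above.
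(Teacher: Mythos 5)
Your overall architecture matches the paper's: establish that $T$ is positivity improving, cite Proposition~\ref{proposition_213_simple} for simplicity of the nonzero eigenvalues, read off the coefficient from Proposition~\ref{proposition_213_calculations}, and plug into the expansion~(\ref{equation_expansion_II}) with $m=2$, $k=1$. The arithmetic reducing $\lambda_0^3\exp(1/(2\lambda_0^2))\lambda_0^{n-2}$ to $\exp(1/(2\lambda_0^2))\lambda_0^{n+1}$ is correct, and your use of the numerical root data to justify the $O(|\lambda_1|^n)$ error term is the same as the paper's.

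However, there is a factual error in the way you justify positivity improvement. You claim that $213$ is indecomposable and invoke Theorem~\ref{theorem_indecomposable_permutations} directly. In fact $213$ is \emph{decomposable}: taking $i=2$ in the paper's definition, $\pi(1)=2\leq 2$ and $\pi(2)=1\leq 2$, so the defining condition is satisfied. Thus Theorem~\ref{theorem_indecomposable_permutations} does not apply directly to $S=\{213\}$. The correct routes are either to cite Corollary~\ref{corollary_single_permutation}, which is stated precisely for singleton forbidden sets and whose proof handles decomposable patterns by passing to the upside-down permutation ($(m+2-\tau(1),\ldots,m+2-\tau(m+1))$, which for $\tau=213$ gives the indecomposable pattern $231$), or to argue directly that the graph $H_{\{213\}}$ is ergodic and then apply Theorem~\ref{theorem_graph_H_ergodic}. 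Fix that one step and your proof is complete and in line with the paper's.
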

By considering the next two conjugate roots
$\lambda_{1}$ and $\lambda_{2}$
we obtain an approximation for the next real term in the expansion of
$\alpha_{n}(213)/n!$:
$$ 2 \cdot 1.158597034\ldots \cdot (0.2989364111\ldots)^{n+1}
     \cdot 
           \cos\left(
- 5.593221320\ldots + (n+1) \cdot 0.7722415374\ldots
               \right) . $$

\section{123,231,312-Avoiding permutations}
\label{section_123_231_312}

Let us consider $123,231,312$-avoiding permutations.
Now the set $S$ is not a singleton, but has
cardinality three, that is, $S$ is given by
the set $\{123,231,312\}$.
The function $\chi$ is given by
$$
    \chi(x_{1},x_{2},x_{3})
  =
    \left\{ \begin{array}{c l}
         1  & \text{if } x_{3} \leq x_{2} \leq x_{1}, \\
         1  & \text{if } x_{2} \leq x_{1} \leq x_{3}, \\
         1  & \text{if } x_{1} \leq x_{3} \leq x_{2}, \\
         0  & \text{otherwise}.
     \end{array} \right.
$$
That is, the operator $T$ is described by
$$ (T f)(x,y)
    =
\left\{ \begin{array}{c l}
          \int_{x}^{y} f(t,x) dt &
          \text{ if }0 \leq x \leq y \leq 1, \\[2 mm]
          \int_{0}^{y} f(t,x) dt + \int_{x}^{1} f(t,x) dt &
          \text{ if } 0 \leq y \leq x \leq 1.
        \end{array} \right. $$

\subsection{Eigenvalues and eigenfunctions}

For a real number $t$ let 
$\{t\}$ denote the fractional part of the real number $t$,
that is, $\{t\} = t - \lfloor t \rfloor$. Observe that
the fractional part belongs to the interval $[0,1)$.
\begin{lemma}
The subspace $W$ of $L^{2}([0,1]^{2})$ consisting of
functions of the form 
\begin{equation}
f(x,y)  =  g(\{x-y\}) \:\:\:\: \text{ for } (x,y) \in [0,1]^{2}
\label{equation_123_231_312_f}
\end{equation}
is invariant under the operator $T$.
\end{lemma}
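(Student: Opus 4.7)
The plan is to verify directly that $T$ maps a function $f(x,y) = g(\{x-y\})$ to another function of the same form by computing $(Tf)(x,y)$ case by case on the two triangles $\{x \leq y\}$ and $\{y \leq x\}$ and tracking how the fractional parts behave. The key algebraic identity I will exploit is that for $x,y \in [0,1]$ with $x \neq y$, the quantity $1 - \{x-y\}$ equals $y - x$ when $x \leq y$ and equals $1 - (x-y)$ when $y \leq x$, so the endpoints of the various integrals collapse to a single expression in $\{x-y\}$.

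First I will handle the triangle $0 \leq x \leq y \leq 1$. Here $(Tf)(x,y) = \int_x^y g(\{t-x\}) dt$, and since $t - x \in [0, y-x] \subseteq [0,1]$, the fractional part simplifies to $\{t-x\} = t-x$ almost everywhere, so the substitution $u = t - x$ gives $(Tf)(x,y) = \int_0^{y-x} g(u) du = \int_0^{1 - \{x-y\}} g(u) du$.

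Next I will handle the triangle $0 \leq y \leq x \leq 1$, where $(Tf)(x,y) = \int_0^y g(\{t-x\}) dt + \int_x^1 g(\{t-x\}) dt$. On the first interval $t - x \in [-x, y-x] \subseteq [-1, 0]$, so $\{t - x\} = t - x + 1$, and the substitution $u = t - x + 1$ yields $\int_{1-x}^{1-x+y} g(u) du$. On the second interval $t - x \in [0, 1-x]$, so $\{t-x\} = t-x$ and the substitution $u = t-x$ yields $\int_0^{1-x} g(u) du$. Adding these gives $\int_0^{1-x+y} g(u) du$, which equals $\int_0^{1 - \{x-y\}} g(u) du$ since here $\{x-y\} = x - y$.

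Combining both cases, $(Tf)(x,y) = h(\{x-y\})$ where $h(t) = \int_0^{1-t} g(u) du$, proving $Tf \in W$. There is no real obstacle here beyond careful bookkeeping of the fractional part on each of the two triangles; the whole argument is essentially a change of variables, and the fact that the two cases produce the same closed form in $\{x-y\}$ is exactly what makes the subspace $W$ invariant.
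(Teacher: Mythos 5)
Your proof is correct and follows essentially the same argument as the paper: compute $(Tf)$ on the two triangles $\{x\leq y\}$ and $\{y\leq x\}$ by change of variables, observe that both cases yield $\int_0^{1-\{x-y\}} g(u)\,du$, and conclude $Tf\in W$. The only difference is that you spell out the fractional-part bookkeeping and the substitutions a bit more explicitly, which is fine but not a new idea.
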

\begin{proof}
There are two cases to consider. First, if $x \leq y$, we have
$$
      T(g(\{x-y\}))
   =
      \int_{x}^{y} g(\{t-x\}) dt
   =
      \int_{0}^{y-x} g(s) ds  . $$
Second, if $y \leq x$ we have
\begin{eqnarray*}
      T(g(\{x-y\}))
  & = &
      \int_{0}^{y} g(\{t-x\} ) dt
     +
      \int_{x}^{1} g(\{t-x\}) dt \\
  & = &
      \int_{1-x}^{1+y-x} g(s) ds
     +
      \int_{0}^{1-x} g(s) ds
   =
      \int_{0}^{1+y-x} g(s) ds .
\end{eqnarray*}
These two cases can be summarized as the integral
from $0$ to $\{y-x\}$. However the upper bound
of the integral can be written as $1 - \{x-y\}$,
that is, the operator can be expressed as
$$      T(g(\{x-y\}))
    = 
       \int_{0}^{1-\{x-y\}} g(s) ds .  $$
\end{proof}
Since constant function $\mathbf{1}$ belongs to $W$,
it is enough to consider
eigenvalues and eigenfunctions in the subspace $W$.

\begin{proposition}
The eigenvalues of the operator $T$ restricted to
the invariant subspace $W$ are given by
$\lambda_{j} = (-1)^{(j-1)/2} \cdot 2/(\pi \cdot j)$
where $j$ is an odd positive integer
and the corresponding eigenfunctions are described by
$$
  \varphi(x,y) = \cos(\{x-y\}/\lambda) .
$$
\label{proposition_123_231_312_eigenfunction}
\end{proposition}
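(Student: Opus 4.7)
The plan is to reduce the two-variable eigenvalue problem on the invariant subspace $W$ to a one-variable integral equation. Writing an eigenfunction in the form $\varphi(x,y) = g(\{x-y\})$ and using the formula $T(g(\{x-y\})) = \int_{0}^{1-\{x-y\}} g(s)\,ds$ from the preceding lemma, the eigenvalue equation $\lambda \varphi = T \varphi$ becomes, after substituting $u = \{x-y\} \in [0,1]$,
$$
\lambda \cdot g(u) = \int_{0}^{1-u} g(s)\,ds.
$$
First, I would observe that $\lambda = 0$ is not an eigenvalue on $W$: the identity $0 = \int_{0}^{1-u} g(s)\,ds$ holding for all $u$ yields $g(1-u) = 0$ upon differentiation.

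Second, for $\lambda \neq 0$ the right-hand side is smooth, so $g$ is smooth, and differentiating once gives $\lambda \cdot g^{\prime}(u) = - g(1-u)$, which in turn (substituting $u \mapsto 1-u$ and differentiating again) implies the second-order linear ODE $\lambda^{2} \cdot g^{\prime\prime}(u) + g(u) = 0$. Its general solution is
$$
g(u) = A \cdot \cos(u/\lambda) + B \cdot \sin(u/\lambda).
$$

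Third, I would plug this general solution back into the integral equation, evaluating $\int_{0}^{1-u} \cos(s/\lambda)\,ds = \lambda \cdot \sin((1-u)/\lambda)$ and $\int_{0}^{1-u} \sin(s/\lambda)\,ds = \lambda \cdot (1 - \cos((1-u)/\lambda))$, expanding with the angle-subtraction formulas, and matching coefficients of the linearly independent functions $1$, $\cos(u/\lambda)$, $\sin(u/\lambda)$. The constant term forces $B = 0$, while the remaining two conditions both reduce to the single requirement $\sin(1/\lambda) = 1$ and $\cos(1/\lambda) = 0$, that is, $1/\lambda = \pi/2 + 2 \cdot \pi \cdot k$ for some integer $k$.

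Finally, I would reindex these solutions: writing $1/\lambda = \pi \cdot (4k+1)/2$ as $k$ ranges over $\mathbb{Z}$ produces exactly the sequence $\pi \cdot j/2$ with $j$ ranging over the signed odd integers $1, -3, 5, -7, \ldots$, which is the same as $(-1)^{(j-1)/2} \cdot \pi \cdot j/2$ with $j$ ranging over the positive odd integers. Inverting gives $\lambda_{j} = (-1)^{(j-1)/2} \cdot 2/(\pi \cdot j)$ with eigenfunction $\varphi(x,y) = \cos(\{x-y\}/\lambda)$ after normalizing $A = 1$. The one real subtlety to check carefully is that matching the constant term from integrating the $\sin(s/\lambda)$ component is essential; differentiation alone throws this information away, so one would otherwise obtain spurious eigenvalues with $\cos(1/\lambda) = 0$ but $\sin(1/\lambda) = -1$.
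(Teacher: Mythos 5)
Your proof is correct and takes essentially the same approach as the paper: reduce to the one-variable integral equation $\lambda g(u) = \int_0^{1-u} g(s)\,ds$, differentiate to get the ODE $\lambda^2 g'' + g = 0$, then feed the general solution back into the integral equation to extract the eigenvalue condition. The paper gets $B=0$ and $\sin(1/\lambda)=1$ by evaluating the integral equation and its first derivative at $t=1$ and $t=0$ respectively, whereas you expand the integrated general solution with angle-subtraction formulas and match coefficients of $1$, $\cos(u/\lambda)$, $\sin(u/\lambda)$; these are equivalent ways of enforcing the same constraints, and your closing remark about the spurious $\sin(1/\lambda)=-1$ branch is a fair caution that the paper handles implicitly by checking back into the integral equation at $t=0$.
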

\begin{proof}
The defining relations for the eigenfunctions are
\begin{equation}
\lambda \cdot g(t)
     =
   \int_{0}^{1-t} g(s) ds  .
\label{equation_123_231_312_eigenvalue_condition}
\end{equation}
Observe that $\lambda = 0$ is not an eigenvalue.
Differentiating
equation~(\ref{equation_123_231_312_eigenvalue_condition})
twice, we obtain
\begin{eqnarray}
\lambda \cdot g^{\prime}(t)
  & = &
- g(1-t) ,
\label{equation_123_231_312_eigenvalue_condition_II} \\
\lambda \cdot g^{\prime\prime}(t)
  & = &
g^{\prime}(1-t)  = - 1/\lambda \cdot g(t) .
\label{equation_123_231_312_eigenvalue_condition_III}
\end{eqnarray}
This second order equation has the general solution
$A \cdot \cos(t/\lambda) + B \cdot \sin(t/\lambda)$.
Setting $t=1$
in~(\ref{equation_123_231_312_eigenvalue_condition})
implies that $g(1) = 0$ and then
setting $t=0$
in~(\ref{equation_123_231_312_eigenvalue_condition_II})
implies that $g^{\prime}(0) = 0$.
Hence $B=0$ and the eigenfunction is
$\cos(t/\lambda)$. 
For this function,
equation~(\ref{equation_123_231_312_eigenvalue_condition})
implies that
$\cos(t/\lambda) = \sin((1-t)/\lambda)$.
Setting $t=0$ we obtain that the eigenvalues
satisfy $\sin(1/\lambda) = 1$.
They can be parametrized as in the proposition.
\end{proof}

Note that the eigenvalues tend to $0$ as
$$  \lambda_{1} > -\lambda_{3} > \lambda_{5} > -\lambda_{7} > \cdots . $$

\begin{proposition}
The eigenvalues $\lambda$ of the operator $T$
restricted to the subspace $W$ are simple.
That is,
there is no function $h(t)$ such that
$\lambda \cdot h = T(h) + g$
where
$g(t) = \cos(t/\lambda)$.
\label{proposition_123_231_312_simple}
\end{proposition}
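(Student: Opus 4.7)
The plan is to reduce the generalized eigenfunction equation to a one-variable problem by using the invariant subspace $W$. Since the equation $\lambda h = T(h) + g$ together with the eigenfunction $g(x,y) = \cos(\{x-y\}/\lambda)$ lives naturally in $W$, I write $h(x,y) = k(\{x-y\})$ and use the same reduction as in the proof of Proposition~\ref{proposition_123_231_312_eigenfunction} to obtain the scalar integral equation
\begin{equation*}
\lambda \cdot k(t) = \int_{0}^{1-t} k(s) \, ds + \cos(t/\lambda).
\end{equation*}

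Next, I would differentiate twice exactly as in the eigenfunction proof. The first differentiation gives $\lambda k'(t) = -k(1-t) - \sin(t/\lambda)/\lambda$, and applying this identity again at $1-t$ produces, after using the eigenvalue relation $\sin(1/\lambda)=1$ (equivalently, the identity $\sin((1-t)/\lambda) = \cos(t/\lambda)$ valid at the eigenvalue), the inhomogeneous second-order ODE
\begin{equation*}
\lambda^{2} \cdot k''(t) + k(t) = -\frac{2}{\lambda} \cdot \cos(t/\lambda).
\end{equation*}
This is a resonant forcing, so the general solution has the form
\begin{equation*}
k(t) = A \cdot \cos(t/\lambda) + B \cdot \sin(t/\lambda) - \frac{t}{\lambda^{2}} \cdot \sin(t/\lambda).
\end{equation*}
The homogeneous term $A\cos(t/\lambda)$ is the eigenfunction itself, so without loss of generality I may absorb it and set $A = 0$.

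Finally, I would impose the two boundary conditions that come from evaluating the original integral equation at the endpoints $t=1$ and $t=0$. At $t=1$ the right-hand integral vanishes and $\cos(1/\lambda)=0$, forcing $k(1)=0$; this determines $B = 1/\lambda^{2}$, yielding $k(t) = (1-t)\sin(t/\lambda)/\lambda^{2}$. Plugging this into the $t=0$ condition $\lambda \cdot k(0) = \int_{0}^{1} k(s) \, ds + 1$ gives $0$ on the left side, while integration by parts evaluates the right side to $1/\lambda \neq 0$. This contradiction proves the proposition.

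I do not anticipate a genuine obstacle: the method mirrors the sketch used in Proposition~\ref{proposition_123_simple} and the detailed argument in Proposition~\ref{proposition_213_simple}. The only delicate point is that the particular solution is resonant (because the forcing term $\cos(t/\lambda)$ itself solves the homogeneous ODE), so one must be careful to take the $t\sin(t/\lambda)$-type ansatz rather than a pure trigonometric one; beyond that, the verification is bookkeeping with the two boundary conditions, one of which is used to fix $B$ and the other of which produces the contradiction.
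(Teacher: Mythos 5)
Your proof is correct and follows essentially the same route as the paper: reduce to the resonant second-order ODE $\lambda^{2}k''+k=-\tfrac{2}{\lambda}\cos(t/\lambda)$ on the subspace $W$, take the particular solution $-\tfrac{t}{\lambda^{2}}\sin(t/\lambda)$, and obtain a contradiction from the two boundary conditions at $t=1$ and $t=0$. The paper keeps both integration constants and uses the once-differentiated relation at $t=0$ (forcing $C_{2}$ to take two incompatible values), whereas you normalize $A=0$ and use the original integral equation at $t=0$; both are equivalent bookkeeping, and the contradiction ($0=1/\lambda$) is the same.
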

\begin{proof}
The defining relations for the eigenfunctions are
\begin{equation}
\lambda \cdot h(t)
    =  
\int_{0}^{1-t} h(s) ds + g(t) .
\label{equation_123_231_312_simple} 
\end{equation}
Differentiating this equation twice gives
\begin{eqnarray}
\lambda \cdot h^{\prime}(t)
  & = &
- h(1-t) + g^{\prime}(t) , 
\label{equation_123_231_312_simple_D}  \\
\lambda \cdot h^{\prime\prime}(t)
  & = &
h^{\prime}(1-t) + g^{\prime\prime}(t) .
\label{equation_123_231_312_simple_D_D}
\end{eqnarray}
Substituting~(\ref{equation_123_231_312_simple_D})
into~(\ref{equation_123_231_312_simple_D_D})
and using
(\ref{equation_123_231_312_eigenvalue_condition_III}),
we obtain the second order differential equation:
$$
    \lambda^{2} \cdot h^{\prime\prime}(t) + h(t)
  =
    2 \cdot \lambda \cdot g^{\prime\prime}(t)    . $$
The general solution is
$$
     h(t)
  =
     - \frac{1}{\lambda^{2}} \cdot t \cdot \sin(t/\lambda)
  +
     C_{1} \cdot \cos(t/\lambda)
  +
     C_{2} \cdot \sin(t/\lambda)  . 
$$
In the following we use that $\sin(1/\lambda) = 1$
and $\cos(1/\lambda) = 0$.
Observe that setting $t=1$ in~(\ref{equation_123_231_312_simple})
gives $\lambda \cdot h(1) = g(1)$, which implies $C_{2} = 1/\lambda^{2}$.
Similarly, setting $t=0$ in~(\ref{equation_123_231_312_simple_D})
gives $\lambda \cdot h^{\prime}(0) = -h(1) + g^{\prime}(0)$
which implies $C_{2} = 1/(2 \cdot \lambda^{2})$,
a different value for $C_{2}$. Hence there is no such a function $h$
satisfying the integral equation~(\ref{equation_123_231_312_simple}).
\end{proof}

Observe that the function $\chi$
satisfies the symmetry in Lemma~\ref{lemma_J}.
Hence the adjoint eigenfunction $\psi$ is given
by $J \varphi$. However, since the value of
$\varphi(x,y)$ only depends on the difference
$x-y$, we have that $J \varphi = \varphi$.

\begin{proposition}
For a non-zero eigenvalue $\lambda$ with eigenfunction $\varphi$
and eigenfunction $\psi = \varphi$ of the adjoint operator $T^{*}$,
we have
\begin{eqnarray*}
\pair{\varphi}{\mathbf{1}} & = & 
\pair{\mathbf{1}}{\overline{\psi}} = \lambda , \\
\pair{\varphi}{\overline{\psi}}    & = & 1/2 .
\end{eqnarray*}
In particular,
$$
\frac{\pair{\varphi}{\mathbf{1}} \cdot
      \pair{\mathbf{1}}{\overline{\psi}}}
     {\pair{\varphi}{\overline{\psi}}}
=
  2 \cdot \lambda^{2} .
$$
\label{proposition_123_231_312_calculations}
\end{proposition}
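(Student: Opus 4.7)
The plan is to exploit the fact that both $\varphi$ and $\psi$ depend on $(x,y)$ only through the combination $\{x-y\}$, reducing each two-dimensional inner product to a one-dimensional integral over $[0,1]$. The key elementary observation is that for any integrable $f \colon [0,1] \to \mathbb{R}$,
$$\int_0^1 \int_0^1 f(\{x-y\}) \, dx \, dy = \int_0^1 f(u) \, du,$$
which follows by splitting the unit square along the diagonal and substituting $u = x-y$ on $\{x \geq y\}$ and $u = 1+x-y$ on $\{x < y\}$, or equivalently by translation invariance of the Haar measure on $\mathbb{R}/\mathbb{Z}$.

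Before applying this reduction, I would first verify that $\psi = \varphi$. The function $\chi$ here satisfies the symmetry hypothesis of Lemma~\ref{lemma_J}, since the forbidden set $\{123,231,312\}$ is closed under the map $\pi \mapsto \pi^{rc}$ on $\mathfrak{S}_{3}$, as may be checked directly: $123 \mapsto 123$, $231 \mapsto 312$, $312 \mapsto 231$. Hence $\psi = J\varphi$, and then
$$J\varphi(x,y) = \varphi(1-y,1-x) = \cos(\{(1-y)-(1-x)\}/\lambda) = \cos(\{x-y\}/\lambda) = \varphi(x,y).$$
Since $\varphi$ is real-valued, $\overline{\psi} = \varphi$ as well.

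With these preparations the three inner products reduce to
\begin{align*}
\pair{\varphi}{\mathbf{1}} &= \int_0^1 \cos(u/\lambda) \, du = \lambda \sin(1/\lambda), \\
\pair{\mathbf{1}}{\overline{\psi}} &= \int_0^1 \cos(u/\lambda) \, du = \lambda \sin(1/\lambda), \\
\pair{\varphi}{\overline{\psi}} &= \int_0^1 \cos^2(u/\lambda) \, du = \tfrac{1}{2} + \tfrac{\lambda}{4}\sin(2/\lambda).
\end{align*}
Proposition~\ref{proposition_123_231_312_eigenfunction} gives $\sin(1/\lambda) = 1$, which forces $\cos(1/\lambda) = 0$ and hence $\sin(2/\lambda) = 2\sin(1/\lambda)\cos(1/\lambda) = 0$. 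Substituting these values yields the three claimed equalities $\lambda$, $\lambda$ and $1/2$, and the ratio $2\lambda^{2}$ follows.

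The computation is routine once the one-dimensional reduction is in place, so there is no substantive obstacle. The only non-mechanical step is confirming the symmetry that identifies $\psi$ with $\varphi$; the remainder is just the half-angle identity for $\cos^{2}$ combined with the algebraic conditions $\sin(1/\lambda) = 1$ and $\cos(1/\lambda) = 0$.
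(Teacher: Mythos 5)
Your proof is correct, and in fact supplies a calculation the paper omits entirely: Proposition~\ref{proposition_123_231_312_calculations} is stated without a \verb|proof| environment, the preceding paragraph only justifying $\psi = J\varphi = \varphi$. Your argument fills this gap cleanly. The reduction $\int_0^1\!\int_0^1 f(\{x-y\})\,dx\,dy = \int_0^1 f(u)\,du$, which is just translation invariance of Haar measure on $\mathbb{R}/\mathbb{Z}$, immediately collapses all three inner products to one-dimensional integrals of $\cos(u/\lambda)$ and $\cos^2(u/\lambda)$, and the eigenvalue condition $\sin(1/\lambda)=1$, $\cos(1/\lambda)=0$ from Proposition~\ref{proposition_123_231_312_eigenfunction} finishes each evaluation. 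Your verification that $\{123,231,312\}$ is closed under reverse-complement (so that Lemma~\ref{lemma_J} applies) and that $J\varphi=\varphi$ because $(1-y)-(1-x)=x-y$ is also exactly right, and is in fact slightly more careful than the paper's remark that $\varphi$ ``only depends on the difference $x-y$,'' since what is really used is dependence on the fractional part $\{x-y\}$.
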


\subsection{Asymptotics}

The above computations show that all eigenvalues
of the operator $T$ are simple and
given explicitly.
We thus obtain the following 
asymptotic expansion for
$\pair{T^{n-2}(\mathbf{1})}{\mathbf{1}}
   =
\alpha_{n}(123,231,312)/n!$
as an immediate consequence of 
Theorem~\ref{theorem_expansion},
Propositions~\ref{proposition_123_231_312_eigenfunction}
and~\ref{proposition_123_231_312_calculations}:
$$
\frac{\alpha_{n}(123,231,312)}{n!}
   =
2 \cdot 
\sum_{\substack{ j = 1 \\ j \text{ \scriptsize odd}}}^{2k+1}
    \lambda_{j}^{n}
   +
 O\left( |\lambda_{2k+3}|^{n}\right)  .
$$
However, when we let $k$ tend to infinity,
observe that the right hand side converges.
Hence we obtain the exact expression
$$
\frac{\alpha_{n}(123,231,312)}{n!}
   =
2 \cdot 
\sum_{\substack{ j \geq 1 \\ j \text{ \scriptsize odd}}}
   (-1)^{(j-1)/2 \cdot n} \cdot \left(\frac{2}{\pi \cdot j}\right)^{n}
$$
for $n \geq 2$.
However this is the expression for 
$E_{n-1}/(n-1)!$ where $E_{n}$ denotes the $n$th Euler number;
see equation~(\ref{equation_Euler}).
Thus we conclude that
\begin{theorem}
\label{theorem_123_231_312}
The number of $123,231,312$-avoiding permutations
in ${\mathfrak S}_{n}$ is given by $n \cdot E_{n-1}$
for $n \geq 2$.
\end{theorem}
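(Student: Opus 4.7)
The plan is to combine the spectral data already assembled for this operator with the classical Fourier series for the Euler numbers. Specifically, Proposition~\ref{proposition_123_231_312_eigenfunction} gives the complete list of nonzero eigenvalues $\lambda_{j} = (-1)^{(j-1)/2}\cdot 2/(\pi\cdot j)$ of $T$ restricted to the invariant subspace $W$ (indexed by odd $j \geq 1$), Proposition~\ref{proposition_123_231_312_simple} shows all of them are simple, and Proposition~\ref{proposition_123_231_312_calculations} computes the coefficient $\pair{\varphi_{j}}{\mathbf{1}}\cdot\pair{\mathbf{1}}{\overline{\psi_{j}}}/\pair{\varphi_{j}}{\overline{\psi_{j}}}=2\cdot\lambda_{j}^{2}$. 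Since the constant function $\mathbf{1}$ lies in $W$, one may carry out the spectral analysis entirely inside $W$.

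First I would apply Theorem~\ref{theorem_expansion} inside $W$: for every non-negative integer $k$, the $2k+1$ largest-in-modulus eigenvalues are $\lambda_{1},\lambda_{3},\ldots,\lambda_{2k+1}$, so
\begin{equation*}
\frac{\alpha_{n}(123,231,312)}{n!}
   =
2\cdot\sum_{\substack{1 \leq j \leq 2k+1 \\ j \text{ odd}}}
        \lambda_{j}^{n}
   +
O\!\left(|\lambda_{2k+3}|^{n}\right),
\end{equation*}
after absorbing the factor $\lambda_{j}^{-2}$ from $\lambda_{j}^{n-m}$ with $m=2$ into the coefficient $2\cdot\lambda_{j}^{2}$.

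Next I would let $k\to\infty$. Because $|\lambda_{j}|=2/(\pi\cdot j)$, the infinite series $\sum_{j \text{ odd}}\lambda_{j}^{n}$ converges absolutely for every $n\geq 2$, and for fixed $n$ the error $|\lambda_{2k+3}|^{n}=(2/(\pi\cdot(2k+3)))^{n}$ tends to~$0$ as $k\to\infty$ (the implicit constant in the $O$-term coming from Theorem~\ref{theorem_expansion} depends on $k$, but for each fixed $n$ we only need one value of $k$ large enough to conclude that the left-hand side equals the convergent series, since the left-hand side is a fixed number). This yields the exact identity
\begin{equation*}
\frac{\alpha_{n}(123,231,312)}{n!}
   =
2\cdot\sum_{\substack{j \geq 1 \\ j \text{ odd}}}
         (-1)^{(j-1)n/2}\cdot\left(\frac{2}{\pi\cdot j}\right)^{n}
\quad\text{for } n\geq 2.
\end{equation*}

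Finally I would match this against equation~(\ref{equation_Euler}) evaluated at $n-1$ in place of $n$: the right-hand side is precisely $E_{n-1}/(n-1)!$, and multiplying through by $n!$ gives $\alpha_{n}(123,231,312)=n\cdot E_{n-1}$. The only subtle point in this plan is the interchange of limits in passing from the asymptotic expansion to a convergent series; the main obstacle is spelling out that the $O$-constant, although $k$-dependent, is harmless because the left-hand side is independent of $k$, so the partial sums must converge to that fixed value.
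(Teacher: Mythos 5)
Your proposal is essentially the paper's own proof: both rely on Theorem~\ref{theorem_expansion} together with Propositions~\ref{proposition_123_231_312_eigenfunction}, \ref{proposition_123_231_312_simple} and~\ref{proposition_123_231_312_calculations} to get the finite asymptotic expansion with coefficients $2\lambda_{j}^{2}\cdot\lambda_{j}^{n-2}=2\lambda_{j}^{n}$, then let $k\to\infty$, observe the series converges, and match the result with equation~(\ref{equation_Euler}).

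One remark on the limiting step, which you rightly flag as the only delicate point. The justification you sketch --- that ``for each fixed $n$ we only need one value of $k$ large enough, since the left-hand side is a fixed number'' --- does not actually close the gap: you have, for each $k$, a bound $\left|\alpha_{n}/n! - 2\sum_{j\leq 2k+1}\lambda_{j}^{n}\right| \leq C_{k}\left|\lambda_{2k+3}\right|^{n}$ with a $k$-dependent constant $C_{k}$ (coming from $\sup_{|z|=r}\|(zI-T)^{-1}\|$ on a shrinking circle), and to conclude exact equality you need $C_{k}\left|\lambda_{2k+3}\right|^{n}\to 0$ as $k\to\infty$ for the fixed $n$; the fact that the left side is independent of $k$ tells you the sequence of bounds is bounded, not that it tends to zero. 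The paper's own phrasing (``when we let $k$ tend to infinity, observe that the right hand side converges, hence we obtain the exact expression'') is terse at exactly this same spot, so you are not departing from the paper --- but if you want this to be airtight you would need to control $\sup_{|z|=r}\|(zI-T)^{-1}\|\cdot r^{n}$ as $r\to 0$ along the chosen sequence of radii (for instance via a growth estimate on the resolvent of the compact operator $T|_{W}^{2}$), or verify the eigenfunction expansion of $\mathbf{1}$ in $W$ is complete.
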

This result is due to Kitaev and Mansour~\cite{Kitaev_Mansour}.
Note that the special form of the eigenfunctions 
and the invariant subspace are reflected
in their proof that this class of permutations is invariant
under the shift
$\pi_{1} \pi_{2} \cdots \pi_{n} \longmapsto
 (\pi_{1}+1) (\pi_{2}+1) \cdots (\pi_{n}+1)$,
where the addition is modulo $n$.

\section{Concluding remarks}

It is straightforward to design a Viennot 
``pyramid'' to compute the number $\alpha_{n}$ of
$S$-avoiding permutations. For the original Viennot triangle,
see~\cite{Viennot_I,Viennot_II}. Let the entry
$\alpha_{n}^{i_{1},\ldots,i_{m}}$ of the pyramid be the number of
permutations in the symmetric group on $n$ elements, avoiding the set
$S$ and ending with the $m$ entries $i_{1},\ldots,i_{m}$. Then the
entry $\alpha_{n}^{i_{1},\ldots,i_{m}}$ is a sum of entries of the
form $\alpha_{n-1}^{j,i_{1},\ldots,i_{m-1}}$. This sum is a
discrete analogue of the operator $T$. How far does this analogue
between the discrete model and the continuous one go? Does the
function $f_{n}=T^{n-m}(\mathbf{1})$ approximate the $n$-th level of the
pyramid? More exactly, how well does the integer
$\alpha_{n}^{i_{1},\ldots,i_{m}}$ compare with $n!\cdot
f_{n}(i_{1}/n,\ldots,i_{m}/n)$?

In the case of descent pattern avoidance, can one prove that $T$ restricted
to the invariant subspace~$V$ is compact? We have done so in the case of
$123$-avoiding permutations.

Consider the graph $G_{\emptyset}$ of overlapping permutations
on the vertex set $\mathfrak{S}_{m}$.
What is the smallest number of edges
one has to remove in order to make the graph not strongly connected?
Clearly one can remove $m$ edges disconnecting
the vertex $12 \cdots m$.
Is $m$ the right answer? This would suggest that
one can remove $m-1$ edges without making the directed graph
disconnected.

A more general enumeration problem is as follows. For a function $w$ on
$\mathfrak{S}_{m+1}$ define the weight of a permutation
$\pi=\left(\pi_{1}\pi_{2}\cdots\pi_{n}\right)$ by the product
$$
\wt(\pi)  =  \prod_{k=1}^{n-m} w(\Pi(\pi_{k}, \ldots, \pi_{k+m})) .
$$
Now what can be said about the values and asymptotics of the sum
$$
\alpha_{n}(w)  =  \sum_{\pi \in \mathfrak{S}_{n}} \wt(\pi)
$$
as $n$ tends to infinity.
For instance, if $w$ is a positive function we
know by the result of Kre\u{\i}n and Rutman that
$$
\alpha_{n}(w)  \sim  c \cdot \lambda^{n} \cdot n! .
$$

An operator is called a {\em Volterra operator} if 
its spectral radius is $0$.
What can be said about Volterra operators of the form~(\ref{equation_T})?
More specifically, what can be said about
the asymptotic of
$\pair{T^{n}(\mathbf{1})}{\mathbf{1}}$?
See Examples~\ref{example_132_231},
\ref{example_123_213_21_321} and~\ref{example_312_321}
for different behaviors.

In all the cases we have calculated,
all the (non-zero) eigenvalues have been simple,
see Propositions~\ref{proposition_123_simple},
\ref{proposition_213_simple}
and~\ref{proposition_123_231_312_simple}.
Hence it is natural for us to conjecture:
\begin{conjecture}
When the graph $H_{S}$ is ergodic
then the associated operator $T_{S}$
only has simple eigenvalues, that is,
all the eigenvalues have index $1$.
\label{conjecture_simple}
\end{conjecture}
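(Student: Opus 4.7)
The natural plan is to mirror, at a general level, the case-by-case analyses carried out for $S = \{123\}$, $\{213\}$, and $\{123,231,312\}$ in Propositions~\ref{proposition_123_simple}, \ref{proposition_213_simple}, and~\ref{proposition_123_231_312_simple}. In each of those cases the strategy was to assume toward contradiction that a nonzero eigenvalue $\lambda$ has index at least two, write the resulting generalized eigenvalue equation $\lambda f = T f + \varphi$ as an integral system, differentiate to obtain ODEs on each simplex $\Delta_{\pi}$, and show that the boundary conditions forced by $\varphi$ are inconsistent. I would try to reproduce this three-step pattern in the general setting.

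First, I would establish a regularity statement: if $\lambda \neq 0$, then on the interior of each cell $\Delta_{\pi}$ the eigenfunction $\varphi$ and any putative generalized eigenfunction $f$ are real-analytic in $(x_1,\ldots,x_m)$. This follows by bootstrapping $f = \lambda^{-1}(T f + \varphi)$, since the kernel $\chi$ is piecewise constant and integration in the first variable raises smoothness by one order. Once smoothness is in hand, restrict $\lambda f - \varphi = T f$ to each $\Delta_\pi$, split the $t$-integral according to which $\Delta_{\sigma}$ the integrand lives in (this corresponds to summing over the in-neighbors of $\pi$ in $G_S$), and differentiate in the first variable to obtain a coupled ODE system on the family $\{f|_{\Delta_\pi}\}_{\pi \in \mathfrak{S}_m}$. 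The continuity of $f$ across shared walls $\{x_i = x_{i+1}\}$ and the values at the cube boundary $\{x_1 \in \{0,1\}\}$ provide a large but finite set of linear constraints on the local solutions.

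The final step would then use ergodicity of $H_S$ to derive a contradiction. Heuristically, strong connectivity plus period one ensures that the boundary data cannot decouple into independent subsystems: information at any cell can be transported to any other cell, which is the mechanism that ruled out the nontrivial solution in each worked example. A possibly cleaner complementary route, which might even suffice on its own, is to prove that the biorthogonality pairing $\pair{\varphi}{\overline{\psi}}$ is nonzero for every nonzero eigenvalue. Indeed, if $(\lambda I - T)^2 g = 0$ and $(\lambda I - T) g = c \cdot \varphi$ with $c \neq 0$, then $c \cdot \pair{\varphi}{\overline{\psi}} = \pair{(\lambda I - T) g}{\overline{\psi}} = \pair{g}{\overline{(\lambda I - T^*)\psi}} = 0$, so the nonvanishing of $\pair{\varphi}{\overline{\psi}}$ rules out index two; combined with a separate argument that the geometric multiplicity is one, this would give simplicity.

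The main obstacle is the last step in either formulation. In the worked cases the ODE systems lived in one or two dimensions and the constraints could be checked by hand, but in general both the local solution dimension and the number of simplices $m!$ grow, and translating the combinatorial ergodicity of $H_S$ into a uniform rank statement about the boundary-constraint matrix is the delicate part. Likewise, showing $\pair{\varphi}{\overline{\psi}} \neq 0$ uniformly in $\lambda$ requires understanding the support and nodal structure of eigenfunctions well beyond what Kre\u{\i}n--Rutman provides for the Perron eigenfunction, and it is precisely this step that we have not been able to push through in general, which is why the statement remains a conjecture.
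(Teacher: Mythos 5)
This statement is labeled a \emph{Conjecture} in the paper, and the paper offers no proof of it; it is posed as an open problem precisely because the authors were unable to generalize the simplicity arguments from the worked examples ($\{123\}$, $\{213\}$, $\{123,231,312\}$) to an arbitrary ergodic~$H_S$. Your proposal correctly identifies that situation: you reproduce the strategy used in Propositions~\ref{proposition_123_simple}, \ref{proposition_213_simple}, and~\ref{proposition_123_231_312_simple} (assume index $\geq 2$, write the generalized eigenvalue equation on each $\Delta_\pi$, differentiate to get an ODE system, and show the boundary conditions are inconsistent), and you honestly flag that the final contradiction step does not go through in general. Your alternative route via the biorthogonality pairing $\pair{\varphi}{\overline{\psi}} \neq 0$ is also a legitimate reformulation, but again, the paper provides no way to verify it uniformly; Kre\u{\i}n--Rutman only controls the Perron eigenfunction, not the nodal structure of higher eigenfunctions. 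One further caution on the ODE strategy: the reduction to finite ODE systems in the worked cases relies on special invariant subspaces (functions depending only on $x_1$ on each descent polytope in Section~\ref{subsection_invariant_subspace}, or functions of $\{x-y\}$ in Section~\ref{section_123_231_312}), and for a general set $S$ of consecutive patterns the restrictions $f|_{\Delta_\pi}$ remain genuinely $m$-variable functions, so the ``coupled ODE system'' becomes a coupled system of integro-PDEs rather than a finite-dimensional one; this is a more serious obstruction than the growth of $m!$ alone. So there is no gap in your review of the problem --- rather, you have correctly concluded that neither you nor the authors have a proof, which is exactly what the word ``Conjecture'' signals.
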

In the case when associated operator
satisfies a symmetry condition as
in Lemma~\ref{lemma_J}
or 
in Lemma~\ref{lemma_R},
does this condition help in proving the conjecture.
If Conjecture~\ref{conjecture_simple} is false
the asymptotic expansion can be obtained
by using
Theorem~22 in~\cite[Section~VII.3]{Dunford_Schwarz}.

In this paper our object is to understand consecutive patterns avoidance.
{\em Generalized pattern avoidance} (also known as {\em vincular pattern avoidance}, see Chapter 7 in \cite{kit}) was introduced by
Babson and Steingr\'{\i}msson~\cite{Babson_Steingrimsson}.
Is there an analytic approach to obtain
asymptotics for these classes of permutations?
Lastly, it would be daring to
ask for an analytic proof of the former Stanley--Wilf conjecture, recently
proved in~\cite{Marcus_Tardos}.

\section*{Acknowledgements}

We are grateful to Judith McDonald for a helpful
discussion regarding the Perron--Frobenius Theorem
and to Hua-Huai (Felix) Chern
who helped us to improve
Proposition~\ref{proposition_213_simple}.
The first author thanks MIT and Institute for Advanced Study
where this paper was completed.
The first author was partially supported by
National Science Foundation grants DMS-0200624
DMS-0902063, DMS-0835373 and CCF-0832797,
and
National Security Agency grant H98230-06-1-0072.

\newcommand{\journal}[6]{{\sc #1,} #2, {\em #3} {\bf #4} (#5), #6.}
\newcommand{\toappear}[3]{{\sc #1,} #2, to appear in {\em #3}}
\newcommand{\JCTA}{J.\ Combin.\ Theory Ser.\ A.}
\newcommand{\book}[4]{{\sc #1,} {\em #2}, #3, #4.}

\vanish{
\bibitem{Brislawn:1989}
\journal{C.\ Brislawn}
        {Kernels of trace class operators}
        {Proc.\ Amer.\ Math.\ Soc.}
        {104}{1988}{1181--1190}

\bibitem{Gantmacher:1960}
F.\ R.\ Gantmacher. \emph{The Theory of Matrices}.
Vols. 1, 2. Translated by K. A. Hirsch. Chelsea Publishing Co., New York 1959.

\bibitem{RS:1972}
M.\ Reed and B.\ Simon. \emph{Methods of Modern Mathematical
Physics, Volume I:\ Functional Analysis}. New York, London:\ Academic Press,
1972.

\bibitem{Simon:1979} B.\ Simon.
Trace ideals and their applications.
London Mathematical Society Lecture Note Series, {\bf 35}.
Cambridge University Press, Cambridge-New York, 1979.

\bibitem {Simon:2005} B.\ Simon.
Trace ideals and their applications. Second edition.
Mathematical Surveys and Monographs, 120.
American Mathematical Society, Providence, RI, 2005.

}

\bigskip

\noindent
{\em R.\ Ehrenborg, and P.\ Perry,}
Department of Mathematics,
University of Kentucky,
Lexington, KY 40506-0027, USA,
\{{\tt jrge},{\tt perry}\}{\tt @ms.uky.edu}

\bigskip

\noindent
{\em S.\ Kitaev,}
Department of Computer and Information Sciences,
University of Strathclyde,
Livingstone Tower, 26 Richmond Street,
Glasgow G1 1XH, United Kingdom,
{\tt sergey.kitaev@cis.strath.ac.uk}

\end{document}